\theoremstyle{plain}
\newtheorem{theorem}{Theorem}[section]
\newtheorem{lemma}[theorem]{Lemma}
\newtheorem{proposition}[theorem]{Proposition}
\newtheorem{corollary}[theorem]{Corollary}
\theoremstyle{definition}
\newtheorem{remark}[theorem]{Remark}
\newcommand{\C}{\mathbb C}
\newcommand{\Q}{\mathbb Q}
\newcommand{\R}{\mathbb R}
\newcommand{\N}{\mathbb N}
\newcommand{\diff}{\mathop{}\!\mathrm{d}}
\newcommand{\abs}[1]{\left| #1 \right|}
\DeclareMathOperator{\re}{Re}
\DeclareMathOperator{\intr}{int}
\DeclareMathOperator{\Gr}{Gr}
\DeclareMathOperator{\hull}{-hull}
\DeclareMathOperator{\unif}{unif}
\begin{document}


\baselineskip=17pt


\title[Random  polynomials and random rational functions]{Approximation by random complex polynomials and random rational functions}

\dedicatory{Dedicated to the memory of J.  Siciak}

\author[Gauthier]{Paul M. Gauthier}
\address{D\'epartement de math\'ematiques et de statistique,  Universit\'e de Mont\-r\'eal,
CP-6128 Centreville, Montr\'eal,  H3C3J7, CANADA}
\email{gauthier@dms.umontreal.ca}
 
\author[Ransford]{Thomas Ransford}
\address{D\'epartement de math\'ematiques et de statistique, Universit\'e Laval, Qu\'e\-bec City,
G1V0A6, CANADA}
\email{thomas.ransford@mat.ulaval.ca}

\author[St-Amant]{Simon St-Amant}
\address{D\'epartement de math\'ematiques et de statistique, Universit\'e de Mont\-r\'eal,
CP-6128 Centreville, Montr\'eal,  H3C3J7, CANADA}
\email{simon.st-amant@umontreal.ca}

\author[Turcotte]{J\'er\'emie Turcotte}
\address{D\'epartement de math\'ematiques et de statistique, Universit\'e de Mont\-r\'eal,
CP-6128 Centreville, Montr\'eal,  H3C3J7, CANADA}
\email{jeremie.turcotte@umontreal.ca}

\date{19 February 2019}

\begin{abstract}
We investigate random compact sets with random functions defined thereon, such as polynomials, rational functions, the pluricomplex Green function and the Siciak extremal function. One surprising consequence of our study is that randomness can be used to `improve' convergence for sequences of functions.
\end{abstract}

\subjclass[2010]{Primary: 30E10, 32E20; Secondary: 30H50}

\keywords{Runge's theorem, random holomorphic}

\maketitle

\setcounter{section}{-1}
\section{Introduction}

We investigate functions and sets with a measurable parameter. For nearly five decades these concepts have been studied in many different mathematical contexts under the names  random functions and random sets, even if sometimes the actual randomness played no particular role. 

The main objective of this article is to generalize complex approximation theorems to the context of random functions. 

Our main results are the following :
\begin{itemize}
    \item A generalization of Runge's Theorem (Theorem \ref{Runge});
    \item A generalization of the Oka--Weil Theorem (Theorem \ref{OkaWeil});
    \item The image of a random function over a compact set is a random compact set (Theorem \ref{F sub f});
    \item The polynomially and rationally convex hulls of a random compact set are random compact sets (Theorems \ref{P-hull measurable} and \ref{R-hull measurable});
    \item The Siciak extremal function and the pluricomplex Green function of a random  compact set are random functions (Theorem \ref{siciak});
    \item A  useful convergence theorem, that states that from a weak form of convergence we may extract stronger convergence (Theorem \ref{convUnif}).
\end{itemize}

We highlight in more detail three results that illustrate the kind of theorems that we are going to establish.
Firstly, a random Runge theorem.

\medskip
{\bf Theorem \ref{conjRunge}.} {\em
Let $K\subset\C$ be an arbitrary non-empty compact set, $(\Omega, \mathcal A)$ a measurable space and 
$f:\Omega\times K\rightarrow\C$ a random function on $K.$  Suppose that,  for each $\omega\in \Omega,$  $f(\omega,\cdot)$ is the restriction to some open neighborhood $U_\omega$ of $K$ of some function $g_\omega$ holomorphic on $U_\omega.$ 

Then, there is a sequence $r_j(\omega,z)$ of random rational functions, pole-free on  $K$, such that
$$
\mbox{for each} \,\, \omega, \quad r_j(\omega,\cdot)\rightarrow f(\omega,\cdot) \quad \mbox{uniformly on} \quad K.
$$   
}

The second result, a striking example of the power of the selection method, came as a surprise to us.  It shows that the approximation in Theorem \ref{conjRunge} implies that a stronger approximation is possible.

\medskip
{\bf Corollary \ref{surprise}.} {\em
Let $K\subset\C^n$ be an arbitrary non-empty compact set, $(\Omega, \mathcal A)$ a measurable space and 
$f:\Omega\times K\rightarrow\C$  a mapping. The following are equivalent. 

1. There is a sequence $r_j(\omega,z)$ of random rational functions, pole-free on  $K$, such that,  
$$
\mbox{for each} \,\, \omega, \quad r_j(\omega,\cdot)\rightarrow f(\omega,\cdot) \quad \mbox{uniformly on} \quad K.
$$

2. There is a sequence $r_j(\omega,z)$ of random rational functions, pole-free on  $K$, such that,  
$$
	r_j(\omega,z)\rightarrow f(\omega,z) \quad \mbox{uniformly on} 
		\quad \Omega\times K.
$$
}

The third result we wish to highlight generalizes the Oka-Weil Theorem.

\medskip
{\bf Theorem \ref{OkaWeil}.} {\em
Let $(\Omega,\mathcal A, \mu)$ be a $\sigma$-finite measure space. Let $K$ be a random compact set, whose range $\{K(\omega):\omega\in\Omega\}$ consists of at most a countable number of different compact sets. Suppose that $K$ is polynomially convex, i.e., $K(\omega)=\widehat K(\omega)$ for all~$\omega$. Let $f$ be a generalized random function such that $f(\omega,\cdot)$ is holomorphic in a neighborhood of $K(\omega)$ for each $\omega$, and let $\varepsilon$ be a positive measurable function defined on $\Omega.$ Then there exists $p$, a generalized random polynomial, such that$$\|p(\omega,\cdot)-f(\omega,\cdot)\|_{K(\omega)}<\varepsilon(\omega)$$
	for all $\omega$ outside a  measurable  set $L\subset \Omega$ such that $\mu(L)=0$.
}

\medskip
The terms used in this theorem will be explained when they first arise.

If $(\Omega, \mathcal A)$ and $(Z, \mathcal B)$ are two measurable spaces and $X$ is an arbitrary non-empty set,
we shall say that a function $f:\Omega\times X\rightarrow Z$ is  a 
random  function
on $X,$ if the function $f(\cdot,x)$ is measurable for each $x\in X.$ 
Clearly, if  $f:\Omega\times X\rightarrow Z$ is a  random function and $Y\subset X,$ then the restricted mapping  $f:\Omega\times Y\rightarrow Z$ is a random function. 
Suppose $f:\Omega\times X\rightarrow Z$ is a  random function on $X$ and both $X$ and $Z$ are also equipped with topologies. If $f(\omega,\cdot)$ is a continuous function on $X$ for each $\omega\in\Omega$, 
then we shall say that $f$  is a random continuous function on $X.$  Similarly, we shall speak of  random polynomials, random rational functions, random holomorphic functions  etc.

We remark that  a random function need not be jointly measurable. We thank Eduardo Zeron for bringing to our attention an  example  given by Sierpinski (see \cite[p.167]{Ru1987}). However, a function that is measurable in one variable and continuous in the other 
(such functions are called Carath\'eodory functions) is jointly measurable  (see \cite[Theorem 3.1.30]{S}).  
The same result holds for complex-valued functions.

For other studies on the topic of this paper and for historical background, see  \cite{AF}, \cite{BS1989}, \cite{BS1996}, \cite{H1975}, \cite{HV1969}, \cite{No},    and some more literature found in these references. We thank the referees for helpful comments.


\section{Measurable functions}

We  use the Borel $\sigma$-algebra on $\R^n$ (and $\C^n$  viewed as $\R^{2n}$) to define measurability. To facilitate the reading of this paper, we collect here some well known basic facts that will be used  throughout the paper.

\begin{proposition} \label{measurableInt}
Let $(\Omega,\mathcal A)$ be a measurable space and $Q$ be a hypercube in $\R^n.$ Suppose $f : \Omega \times Q \rightarrow\C$ is such that $f(\cdot, x)$ is measurable for all $x$ and $f(\omega, \cdot)$ is Riemann integrable on  $Q$ for all $\omega \in \Omega$. Then the function $F(\omega): = \int_Q f(\omega,x) \diff x$ is measurable.
\end{proposition}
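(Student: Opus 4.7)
The plan is to realize $F(\omega)$ as the pointwise limit, in $\omega$, of a sequence of Riemann sums which, crucially, use sample points chosen once and for all, independently of $\omega$. Since each such Riemann sum is then a finite complex-linear combination of the measurable functions $f(\cdot,x_k)$, it is measurable in $\omega$; and pointwise limits of complex measurable functions are measurable.

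In detail, for each integer $N\ge 1$ I would partition $Q$ into $N^n$ congruent subhypercubes $Q_{N,k}$ of volume $|Q|/N^n$, and for each $k$ pick one sample point $x_{N,k}\in Q_{N,k}$ (say a corner). The mesh of this partition tends to $0$ as $N\to\infty$. Define
$$
S_N(\omega) := \sum_{k=1}^{N^n} f(\omega,x_{N,k})\,\frac{|Q|}{N^n}.
$$
By hypothesis, $\omega\mapsto f(\omega,x_{N,k})$ is measurable for each fixed $k$ and $N$, so $S_N$ is a measurable $\C$-valued function on $\Omega$.

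For each fixed $\omega\in\Omega$, the function $f(\omega,\cdot)$ is Riemann integrable on $Q$, and by the very definition of Riemann integrability, the Riemann sums associated with any sequence of tagged partitions whose mesh tends to $0$ converge to the integral $F(\omega)$. Hence $S_N(\omega)\to F(\omega)$ for every $\omega\in\Omega$, and $F$ is the pointwise limit of the measurable functions $S_N$, therefore measurable.

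There is essentially no obstacle: the only point that deserves emphasis is that the sample points $x_{N,k}$ must be chosen independently of $\omega$, so that the measurability of each $S_N$ follows directly from the hypothesis that $f(\cdot,x)$ is measurable for each \emph{fixed} $x\in Q$. The Riemann integrability hypothesis, applied separately for each $\omega$, then guarantees the pointwise convergence $S_N\to F$ without any uniformity requirement.
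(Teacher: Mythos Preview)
Your proof is correct and takes essentially the same approach as the paper: realize $F$ as the pointwise limit of Riemann sums built from fixed tag points, so that each sum is measurable in $\omega$ and the limit is measurable. The paper's proof is a two-sentence sketch of precisely this argument; you have simply spelled out the details, including the key observation that the sample points must be chosen independently of $\omega$.
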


\begin{proof}
Since $f(\omega,\cdot)$ is Riemann integrable for each $\omega,$ we have pointwise convergence of the Riemann sums to $F$. We easily conclude that $F$ is measurable.
\end{proof}

Let $K$ be a compact metric space and let  $C(K)$ be the Banach algebra of continuous complex-valued functions on $K$ equipped with the sup norm.  
We consider $C(K)$ as a measurable space endowed with 
the $\sigma$-algebra ${\mathcal B}$ of Borel subsets of $C(K)$.

\begin{proposition}\label{measurable element} 
Let $F:\Omega\rightarrow C(K)$ be a  mapping, and define a function
$$
	f:\Omega\times K\rightarrow \C, \quad (\omega,x)\mapsto F(\omega)(x).
$$
Then $F$ is measurable if and only if 
$f(\cdot, x)$ is measurable for all $x\in K$. 
\end{proposition}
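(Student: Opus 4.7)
The forward implication is immediate. For each $x\in K$, the evaluation map $\mathrm{ev}_x\colon C(K)\to\C$, $g\mapsto g(x)$, is continuous (in fact $1$-Lipschitz with respect to the sup norm), hence Borel measurable. Since $f(\cdot,x)=\mathrm{ev}_x\circ F$, measurability of $F$ forces measurability of $f(\cdot,x)$ for every $x\in K$.

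For the converse, the plan is to exploit the separability of $K$ (and hence of $C(K)$) to reduce a supremum over $K$ to a supremum over a countable set, where measurability is preserved. Since $K$ is a compact metric space, pick a countable dense subset $\{x_n\}_{n\in\N}\subset K$. Given any $g,h\in C(K)$, continuity yields
$$
\|g-h\|_K=\sup_{x\in K}|g(x)-h(x)|=\sup_{n\in\N}|g(x_n)-h(x_n)|.
$$
Therefore, for any $g_0\in C(K)$ and any $r>0$,
$$
F^{-1}\bigl(\{g\in C(K):\|g-g_0\|_K<r\}\bigr)=\bigl\{\omega\in\Omega:\sup_{n\in\N}|f(\omega,x_n)-g_0(x_n)|<r\bigr\},
$$
and the right-hand side is measurable, being built from the countable family of measurable functions $\omega\mapsto|f(\omega,x_n)-g_0(x_n)|$ by taking a supremum and a strict inequality.

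To conclude, I would invoke the fact that $C(K)$ is separable (since $K$ is a compact metric space), hence second countable, so every open set $U\subset C(K)$ is a countable union of open balls. Then $F^{-1}(U)$ is a countable union of measurable sets, hence measurable, and since open sets generate the Borel $\sigma$-algebra $\mathcal B$, $F$ is measurable. The only real point requiring care is this reduction from arbitrary balls to a countable basis; without separability one would only obtain measurability of each individual ball preimage, which does not directly imply measurability of $F$. Separability removes this obstacle cleanly.
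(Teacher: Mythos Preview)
Your proof is correct and follows essentially the same route as the paper: the forward direction via composition with the continuous evaluation map, and the converse by using a countable dense subset of $K$ to rewrite the sup-norm ball condition as a countable family of scalar conditions, then invoking separability of $C(K)$ to pass from balls to arbitrary Borel sets. The only cosmetic difference is that the paper works with closed balls (writing $F^{-1}\overline{B_r(g)}=\bigcap_j f_{x_j}^{-1}(B_r)$) rather than open ones, but the substance is identical.
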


\begin{proof}
Let $F$ be measurable.
For  $x$ fixed, 
$$f(\omega,x)=F(\omega)(x)=\phi_x(F(\omega))=(\phi_x\circ F)(\omega),$$ where $\phi_x:C(K)\rightarrow \C$ is the evaluation functional at the point $x$.
Thus $f(\cdot,x)$ is the composition of the measurable function $F$ with the continuous function $\phi_x$. Therefore $f(\cdot,x)$ is indeed measurable. 

We write $\overline{B_r}(g)=\{j \in C(K) : \sup_{x\in K} |j(x)-g(x)|\leq r\}$, the closed ball of radius $r>0$ around a point $g$ of $C(K)$. Also, we write $f_x^{-1}(E)=\{\omega \in \Omega : f(\omega, x)\in E\}$.

For the converse, since $K$ is compact metric, $C(K)$ is separable. Thus it suffices to show that $F^{-1}\overline{B_r(g)}\in\mathcal A.$ By hypothesis, $K$ has a countable dense set $\{x_j\}.$ Let $B_r=\{z\in\mathbb C:|z|\le r\},$ and for $x\in K$ let  
$$
	f_x(\omega)=|g(x)-F(\omega)(x)|=|g(x)-f(\omega,x)|.
$$
Then each $f_x$ is measurable and 
$$
	F^{-1}\overline{B_r(g)} = \bigcap_{j=1}^\infty f_{x_j}^{-1}(B_r) \in \mathcal A.
$$

\end{proof}

\begin{corollary}\label{maps F to f}
Let $K$ be a compact metric space,  let
$F:\Omega\rightarrow C(K,\C^n)$ be a mapping, and define a function
$$
	f:\Omega\times K\rightarrow\C^n, \quad (\omega,x)\mapsto F(\omega)(x).
$$
Then $F$ is measurable if and only if $f(\cdot, x)$ is measurable for each $x\in K$.
\end{corollary}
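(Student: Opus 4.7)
The plan is to reduce the vector-valued statement to the scalar case handled by Proposition \ref{measurable element}. The key observation is that $C(K,\C^n)$, equipped with the supremum norm, is canonically homeomorphic to the product $C(K,\C)^n$ with its product topology: the coordinate projections $\pi_i:C(K,\C^n)\to C(K,\C)$, $g\mapsto g_i$, are continuous and jointly determine $g$. Consequently, Borel measurability into $C(K,\C^n)$ coincides with componentwise Borel measurability into $C(K,\C)$.

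Accordingly, I would first write $F(\omega)=(F_1(\omega),\ldots,F_n(\omega))$ with each $F_i:\Omega\to C(K,\C)$, and similarly $f=(f_1,\ldots,f_n)$ where $f_i(\omega,x)=F_i(\omega)(x)$. Since each $\pi_i$ is continuous, if $F$ is measurable then so is $F_i=\pi_i\circ F$. Conversely, if each $F_i$ is measurable, then $F$ is measurable into the product, which as noted coincides with $C(K,\C^n)$. An analogous elementary argument shows that $f(\cdot,x):\Omega\to\C^n$ is measurable if and only if each scalar component $f_i(\cdot,x):\Omega\to\C$ is measurable (since the Borel $\sigma$-algebra on $\C^n$ is generated by the coordinate projections).

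With these two equivalences in place, the corollary follows at once by applying Proposition \ref{measurable element} separately to each index $i\in\{1,\ldots,n\}$: the proposition tells us that $F_i$ is measurable if and only if $f_i(\cdot,x)$ is measurable for every $x\in K$. Chaining the equivalences yields that $F$ is measurable if and only if $f(\cdot,x)$ is measurable for every $x\in K$.

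There is no real obstacle here; the only point to be careful about is the identification between measurability of a map into the Banach space $C(K,\C^n)$ and componentwise measurability into $C(K,\C)$. Since the product is finite, this reduction is standard and requires no additional hypothesis. The argument therefore amounts to a short componentwise reduction followed by a direct invocation of the preceding proposition.
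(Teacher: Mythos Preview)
Your argument is correct and follows essentially the same approach as the paper: both decompose into the scalar components $F_j$ and $f_j$ and invoke Proposition~\ref{measurable element} for each $j$. You are simply more explicit than the paper about why measurability of $F$ into $C(K,\C^n)$ is equivalent to componentwise measurability of the $F_j$ (and likewise for $f(\cdot,x)$), whereas the paper leaves this reduction implicit.
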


\begin{proof}
For $j=1,\ldots,n,$ 
let $F_j(\omega)$ be the components of $F(\omega)$ and, define functions 
$$
	f_j:\Omega\times K\rightarrow\C, \quad (\omega,x)\mapsto F_j(\omega)(x).
$$
By Proposition \ref{measurable element}, for each $j=1,\ldots n,$ 
the map $F_j$ is measurable if and only if $f_j(\cdot,x)$ is measurable for all $x\in K$. The result follows.
\end{proof}


\section{Runge's theorem for random functions}

If $U$ is an open set in $\C,$ we say that a function $f:\Omega\times U\rightarrow \C$ is a random holomorphic function on $U$ if $f(\omega,\cdot)$ is holomorphic for each  
$\omega\in \Omega$ 
and $f(\cdot,z)$ is measurable for each $z\in U.$ 
We define a random polynomial $p$  as a random function  $p:\Omega\times\C\rightarrow\C,$ such that $p(\omega,\cdot)$ is a polynomial for each $\omega\in\Omega.$ 

Consider the following example. Let $\Omega = \Omega_0 \cup \Omega_1\cup\Omega_2\cup\cdots$ be a partition of $\Omega$ into non-empty measurable subsets and, for $n=0,1,2,\ldots$,  let $a_n$ be the characteristic function of $\Omega_n.$ Then,
$$
	p(\omega,z) = \sum_{n=0}^\infty a_n(\omega)z^n
$$
is a random polynomial.  
Indeed, for fixed $\omega,$ 
the function $p(\omega,z)$ is the polynomial $z^n,$ where $n$ is the unique natural number for which $\omega\in\Omega_n.$ For fixed $z,$ the function $p(\omega,z)$ is measurable, since it is the pointwise limit of the partial sums and the latter are measurable.
Notice that $p$ has infinite degree, although for $\omega\in\Omega_n,$ $p$ has degree $n,$ since $p(\omega,z)=z^n.$

For the extended complex plane $\C\cup\{\infty\},$ we use the notation $\overline\C.$ 
If $f:\Omega\times X\rightarrow\C$ is a random  complex function on some set $X,$ then we may also consider $f$ as a random function  $f:\Omega\times X\rightarrow\overline\C$ since, for every Borel set $B\subset\overline\C$ and every $x\in X,$ 
$$
	\{\omega:f(\omega,x)\in B\} = \{\omega:f(\omega,x)\in B\cap\C\},
$$ 
is measurable, and $B\cap\C$ is a Borel subset of $\C.$ In particular, every random  polynomial, which by definition is a $\C$-valued random function,  can be considered as a  $\overline\C$-valued random function.

For a compact set $K\subset\C,$ let us define a random rational function  pole-free on $K$ as a random rational function $f:\Omega\times K\rightarrow\C,$ such that, for each $\omega\in \Omega,$  the function $f(\omega,\cdot)$ is a rational function pole-free on  $K.$  

Let $K$ be a compact set in $\C^n$ and $f$ be a random
function on $K$  that is continuous on $K$ and holomorphic on the interior of $K$. We say that $f$ is in $R_\Omega(K)$ if there exists a sequence $r_j$ of random rational functions 
pole-free on $K$  such that, 
for every $\omega \in \Omega,$
 $r_j(\omega, \cdot) \rightarrow f(\omega, \cdot)$ uniformly on $K$.
Similarly, we say that $f$ is in $R_\Omega^{\unif}(K)$ if, for every $\varepsilon > 0$ there exists a random rational function $r$ pole-free on $K$ 
such that $\abs{r(\omega, z) - f(\omega, z)} < \varepsilon$ for all $(\omega, z)$ in $\Omega \times K$.

It would  be useful to have sufficient conditions  for a random function $f$  to be in  $R_\Omega(K).$ Let us call such a result a Runge theorem for random functions.  
Andrus and Brown \cite{AB1984} obtained such a  Runge theorem, in which the compact set $K$ was also random. 
We now formulate our first version of a Runge theorem, in which the compact set $K$ is classic (parameter free) and the approximation is everywhere. 
Another difference between our presentation and that in \cite{AB1984} is that we have given an explicit definition of random  ``rational function 
pole-free on  $K$,'' whereas in \cite{AB1984} no explicit definition of (the corresponding notion) random rational function is given.

\begin{theorem}\label{Runge 1}
Let $U$ be an open set in $\C$ and $f:\Omega\times U \rightarrow \C$ be a random holomorphic function. Let $K$ be a compact subset of $U$. 
Then there exists a sequence $R_1, R_2, \ldots,$ of random rational functions  pole-free on 
$K$ such that,  for each
$\omega\in\Omega,$ 
$$
	R_n(\omega,\cdot)\rightarrow f(\omega,\cdot) \quad \mbox{uniformly on} \quad K. 
$$
\end{theorem}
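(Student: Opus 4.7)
The plan is to execute the classical proof of Runge's theorem via the Cauchy integral formula, making sure all the geometric data (contour, partition, sample points) is chosen independently of $\omega$, so that the resulting approximants are measurable in $\omega$ for free.

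First, I would construct, by the standard grid argument, a cycle $\Gamma$ in $U\setminus K$ that is a finite union of oriented line segments lying on the boundary of a grid of closed squares covering $K$ and contained in $U$, arranged so that the winding number of $\Gamma$ about each point of $K$ is equal to $1$. This $\Gamma$ depends only on $K$ and $U$, not on $\omega$. Then for each fixed $\omega\in\Omega$, the classical Cauchy integral formula applied to the holomorphic function $f(\omega,\cdot)$ gives
$$
f(\omega,z)=\frac{1}{2\pi i}\int_\Gamma \frac{f(\omega,\zeta)}{\zeta-z}\,\diff\zeta,\qquad z\in K.
$$

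Second, I would discretize. Choose, for each $n$, a partition of $\Gamma$ into finitely many subarcs of length at most $1/n$, pick sample points $\zeta_1^{(n)},\dots,\zeta_{N_n}^{(n)}$ on these subarcs (with complex increments $\Delta\zeta_j^{(n)}$), and define the Riemann sum
$$
R_n(\omega,z):=\frac{1}{2\pi i}\sum_{j=1}^{N_n}\frac{f(\omega,\zeta_j^{(n)})}{\zeta_j^{(n)}-z}\,\Delta\zeta_j^{(n)}.
$$
For each $\omega$, $R_n(\omega,\cdot)$ is a rational function in $z$ whose poles lie on $\Gamma\subset U\setminus K$, so it is pole-free on $K$. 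For fixed $z\in K$, the map $\omega\mapsto R_n(\omega,z)$ is a finite $\C$-linear combination of the measurable functions $\omega\mapsto f(\omega,\zeta_j^{(n)})$, with coefficients depending only on $z$ and the preselected data, hence it is measurable. Thus $R_n$ is a random rational function pole-free on $K$.

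Finally, I would verify the convergence. Fix $\omega$. Since $\Gamma\cap K=\emptyset$ and both sets are compact, the function $(\zeta,z)\mapsto f(\omega,\zeta)/(\zeta-z)$ is continuous, hence uniformly continuous, on $\Gamma\times K$. Standard estimates then show that $R_n(\omega,z)$ converges to the integral $\frac{1}{2\pi i}\int_\Gamma f(\omega,\zeta)/(\zeta-z)\,\diff\zeta=f(\omega,z)$ uniformly for $z\in K$.

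There is essentially no obstacle: the only thing one has to be careful about is to fix $\Gamma$ and the partitions \emph{before} looking at $\omega$, so that the measurability in $\omega$ of $R_n(\cdot,z)$ reduces to a finite linear combination of the given measurable functions $f(\cdot,\zeta_j^{(n)})$. The modulus of continuity controlling the speed of convergence is allowed to depend on $\omega$, which is consistent with the conclusion that convergence is required only pointwise in $\omega$.
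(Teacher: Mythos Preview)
Your proposal is correct and follows essentially the same approach as the paper: fix a polygonal contour $\Gamma\subset U\setminus K$ independent of $\omega$, represent $f(\omega,\cdot)$ by the Cauchy integral over $\Gamma$, form Riemann sums with a partition refining to zero (again chosen independently of $\omega$), observe these are random rational functions pole-free on $K$, and use uniform continuity of $(\zeta,z)\mapsto f(\omega,\zeta)/(\zeta-z)$ on $\Gamma\times K$ to get the convergence for each $\omega$. The paper phrases the contour construction slightly differently (a finite cover of $K$ by compact polygonal regions in $U$), but the argument is the same.
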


\begin{proof}
We can cover $K$ by finitely many disjoint compact sets $Q_1,\cdots,Q_n,$ such that each $Q_k$ is contained in $U,$  each $Q_k$ is bounded by finitely many disjoint polygonal curves and $K\subset \cup_k \intr Q_k.$ 
Let  $\Gamma = \cup_k\partial Q_k.$  
By the Cauchy formula, for each $\omega\in\Omega,$ 
$$
	f(\omega, z) = \frac{1}{2\pi i} \int_\Gamma \frac{f(\omega, \zeta)}{\zeta - z}\diff\zeta		
		\quad,\quad  \forall z\in K.
$$
For $\delta>0,$ partition $\Gamma$ into $N=N(\delta)$ segments $\Gamma_j$ of length smaller than $\delta$.  For each $\Gamma_j,$ denote by $\zeta_j$ the terminal point of $\Gamma_j.$ 
The Riemann sum 
\begin{equation*}
R(\omega, z) = \sum_{j=1}^{N(\delta)} \frac{1}{2\pi i} \frac{f(\omega, \zeta_j)}{\zeta_j - z} \int_{\Gamma_j} \diff\zeta =  \sum_{j=1}^{N(\delta)} \frac{a_j(\omega)}{\zeta_j-z}
\end{equation*}
is a random rational function pole-free on  $K.$ Put
$$
	 \eta(\omega,\delta) := \max 
\left\{\frac{1}{2\pi}\left|\frac{f(\omega, \zeta)}{\zeta - z} - \frac{f(\omega, w)}{w - z}\right|:
	\zeta, w \in \Gamma, |\zeta-w|<\delta,  z\in K\right\}. 
$$
For all $(\omega,z)\in\Omega\times K,$ 
$$
	\abs{f(\omega,z) - R(\omega,z)} < \eta(\omega,\delta)\cdot L(\Gamma),
$$
where $L(\Gamma)$ is the length of $\Gamma.$ It follows from the uniform continuity of $(\zeta,z)\mapsto f(\omega,\zeta)/(\zeta-z)$ on $\Gamma\times K$ that, if $\delta=\delta(\omega)$ is sufficiently small, then $\eta(\omega,\delta)<\varepsilon/L(\Gamma).$ Thus, 
$$
	\abs{f(\omega,z) - R(\omega,z)} < \varepsilon, \quad \forall z\in K. 
$$

Let $\{\delta_n\}_n$ be a sequence of positive numbers decreasing to zero and, for each $\delta_n,$  let $R_n$ be a random rational function pole-free on  $K,$
corresponding as above to $\delta_n.$ Then, for each
$\omega\in\Omega,$ 
$$
	R_n(\omega,\cdot)\rightarrow f(\omega,\cdot) \quad \mbox{uniformly on} \quad K. 
$$
\end{proof}

Theorem \ref{Runge 1} allows us to approximate a random  function $f$ on a compact set $K,$ provided there is an open neighborhood $U$ of $K,$ such $f(\omega, \cdot)$ is holomorphic on 
$U$ for all $\omega \in \Omega.$ This condition 
is quite strong and 
we shall now set the stage for  a better version of Runge's theorem. 
 
For an open subset $V\subset\C,$ denote by $C^1(V)$ the family of continuously differentiable functions on $V$ and for $g\in C^1(V),$ and $M\ge 0,$ put
$$
	\|g\|_1:=\sup\left\{\max\{|g(z)|,\|\nabla g(z)\|\}: z\in V\right\},
$$
$$
	C^1(V,M):= \{g\in C^1(V): \|g\|_1\le M\}.	
$$ 

Let $\Gamma$ be a finite union of disjoint smooth curves in $V.$  For a partition $\mathcal P=(\zeta_1,\zeta_2,\ldots,\zeta_\ell)$ of $\Gamma$ and a function $g\in C^1(V),$  we denote by 
$\sum_{g,\mathcal P}$ the Riemann sum 
$$
	\sum_{g,\mathcal P} := \sum_{\gamma_j}g(\zeta_j)\int_{\gamma_j} d\zeta,
$$
where $\Gamma:=\sum_j\gamma_j$ is the decomposition of $\Gamma$ into arcs induced by the partition $\mathcal P$ and $\zeta_j$ is the initial point of the arc $\gamma_j.$ 

\begin{lemma}\label{Runge lemma} Let $V, \Gamma, M$ be  as above. Then, for  each $\varepsilon>0,$ there exists a partition $\mathcal P$ such that, for each $g\in C^1(V,M)$,
$$
	\left|\int_\Gamma g(\zeta) - \sum_{g,\mathcal P}\right| < \varepsilon.
$$
\end{lemma}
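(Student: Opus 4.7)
The plan is to prove this by a direct Riemann-sum error estimate, exploiting the fact that the uniform bound $M$ on $\|g\|_1$ allows the partition $\mathcal P$ to be chosen independently of $g\in C^1(V,M)$. The key observation is that the quantity controlling the error is the mesh of the partition together with the arc-length of $\Gamma$, both of which are independent of $g$.

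First, I would work arc by arc. Write $\Gamma=\sum_j\gamma_j$ with $\zeta_j$ the initial point of $\gamma_j$ and $\ell_j$ the length of $\gamma_j$. The quantity to estimate is
$$
\left|\int_\Gamma g(\zeta)\diff\zeta - \sum_{g,\mathcal P}\right|
= \left|\sum_j\int_{\gamma_j}\bigl(g(\zeta)-g(\zeta_j)\bigr)\diff\zeta\right|
\le \sum_j\int_{\gamma_j}\abs{g(\zeta)-g(\zeta_j)}\,\abs{\diff\zeta}.
$$
For $\zeta\in\gamma_j$, a mean-value estimate along $\gamma_j$ using $\|\nabla g\|\le M$ gives
$$
\abs{g(\zeta)-g(\zeta_j)}\le M\cdot(\text{arclength from }\zeta_j\text{ to }\zeta)\le M\ell_j.
$$
Hence the $j$-th term is at most $M\ell_j^2$, and summing gives
$$
\left|\int_\Gamma g(\zeta)\diff\zeta-\sum_{g,\mathcal P}\right|
\le M\sum_j\ell_j^{\,2}\le M\cdot(\max_j\ell_j)\cdot L(\Gamma).
$$

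Now I would choose the partition. Since $\Gamma$ is a finite union of disjoint smooth curves (so $L(\Gamma)<\infty$), it suffices to pick any partition $\mathcal P$ whose mesh $\max_j\ell_j$ satisfies
$$
\max_j\ell_j<\frac{\varepsilon}{ML(\Gamma)+1},
$$
which is visibly possible by subdividing each smooth component of $\Gamma$ into sufficiently many arcs. With this choice, the displayed bound yields the desired inequality for every $g\in C^1(V,M)$ simultaneously.

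The only subtle point — and the reason the lemma is phrased the way it is — is that the bound is uniform in $g$; this is exactly where the hypothesis $g\in C^1(V,M)$ (rather than merely $g\in C^1(V)$) is used, and it is what will later allow one to choose a single partition that works for an entire family of random holomorphic functions with a common gradient bound. I do not anticipate a genuine technical obstacle; the slight care needed is just to make sure the ``arclength from $\zeta_j$ to $\zeta$'' estimate is legitimate on each $\gamma_j$, which follows from smoothness of the individual components of $\Gamma$ and the $C^1$ bound on $g$ applied along the arc.
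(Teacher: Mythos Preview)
Your argument is correct; this is exactly the standard Riemann-sum error bound, and you have identified the essential point, namely that the uniform gradient bound $M$ is what makes the mesh choice independent of~$g$. The paper in fact states this lemma without proof (it proceeds directly to Theorem~\ref{Runge 2}), so there is nothing further to compare.
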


\begin{theorem}\label{Runge 2} Let $U$ be an open subset of $\C$ and $K$  a compact subset of $U.$ Let $f:\Omega\times U\rightarrow\C$ be a random function on $U$ such that, for each $\omega,$ there is an open neighborhood $U_\omega$ of $K$ in $U$ for which the restriction of $f(\omega,\cdot)$ to $U_\omega$ is holomorphic. Then there is a sequence $\{R_k(\omega,z)\}$ of random rational functions pole-free on  $K,$
such that, for each $\omega,$ $R_k(\omega,\cdot)\rightarrow f(\omega,\cdot)$ uniformly on $K.$
\end{theorem}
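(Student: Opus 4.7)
The plan is to reduce Theorem~\ref{Runge 2} to Theorem~\ref{Runge 1} by measurably partitioning $\Omega$ into countably many pieces, on each of which a single fixed open neighborhood of $K$ plays the role of the set $U$ of Theorem~\ref{Runge 1}. The main obstacle is measurability: the hypothesis only gives that $f(\cdot,z)$ is measurable for each $z$, and nothing in the statement lets us directly detect, in a measurable way, the set of $\omega$ for which $f(\omega,\cdot)$ is holomorphic on a prescribed fixed neighborhood of $K$.

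To get around this, I would first note that the Taylor coefficients $a_{j,k}(\omega):=\partial_z^k f(\omega,z_j)/k!$ at the points $z_j$ of a fixed countable dense subset of $K$ are measurable in $\omega$: each such derivative exists (because $z_j\in K\subset U_\omega$) and is a pointwise limit of finite-difference quotients of the measurable functions $f(\cdot,z_j+h_m)$. The radii of convergence $R_j(\omega):=1/\limsup_k|a_{j,k}(\omega)|^{1/k}$ are therefore measurable and strictly positive. For each positive integer $n$, I would set
$$
\Omega_n := \bigcap_j\{\omega:R_j(\omega)\ge 2/n\}\cap\bigcap_{j,\,\zeta\in D(z_j,2/n)\cap(\Q+i\Q)}\Bigl\{\omega:f(\omega,\zeta)=\sum_{k\ge 0}a_{j,k}(\omega)(\zeta-z_j)^k\Bigr\}.
$$
This is a countable intersection of measurable sets, increasing in $n$, with $\bigcup_n\Omega_n=\Omega$: once $n$ is large enough that the $(2/n)$-neighborhood of $K$ lies in $U_\omega$, both conditions hold by uniqueness of Taylor expansion. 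For $\omega\in\Omega_n$, the identity theorem forces the various Taylor series $T_{z_j,\omega}(z):=\sum_k a_{j,k}(\omega)(z-z_j)^k$ to agree on their overlaps (they coincide on a dense set of rational points), so they paste together into a single holomorphic function $g_\omega$ on $W_n:=\bigcup_j D(z_j,2/n)=\{z:\mathrm{dist}(z,K)<2/n\}$, and a further application of the identity theorem yields $g_\omega=f(\omega,\cdot)$ on $K$.

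Finally, for each sufficiently large $n$ I would fix an open set $\tilde V_n$ with polygonal boundary $\Gamma_n$ satisfying $K\subset\tilde V_n\subset\overline{\tilde V_n}\subset\{z:\mathrm{dist}(z,K)<1/n\}\subset W_n$, and run the Riemann-sum construction of Theorem~\ref{Runge 1} on $\Gamma_n$, but substituting the measurable quantity $T_{z_{j(\zeta)},\omega}(\zeta)$ in place of $f(\omega,\zeta)$ at each partition point $\zeta$ (where $j(\zeta)$ is a deterministic choice of $j$ with $\zeta\in D(z_j,2/n)$, which exists since $\Gamma_n\subset W_n$). The resulting expressions are random rational functions $R_{n,k}(\omega,z)$ pole-free on $K$; for $\omega\in\Omega_n$ they coincide with the Riemann sums built from the holomorphic function $g_\omega$ on a neighborhood of $\overline{\tilde V_n}$, so the argument of Theorem~\ref{Runge 1} yields $R_{n,k}(\omega,\cdot)\to f(\omega,\cdot)$ uniformly on $K$ as $k\to\infty$. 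Setting $n(\omega):=\min\{n:\omega\in\Omega_n\}$ and $R_k(\omega,z):=R_{n(\omega),k}(\omega,z)$ then gives the desired sequence, since $\{n(\omega)=n\}=\Omega_n\setminus\Omega_{n-1}$ is measurable.
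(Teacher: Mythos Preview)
Your argument is correct, but it takes a substantially different route from the paper's, and it is more elaborate than needed for this particular statement.

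The paper does \emph{not} partition $\Omega$ at all. It fixes a neighborhood basis $U_k$ of $K$ inside $U$, contours $\Gamma_k\subset U_k\setminus K$, and invokes a uniform Riemann-sum lemma (Lemma~\ref{Runge lemma}): for each $k$ there is a single partition $\mathcal P_k$ of $\Gamma_k$ such that the associated Riemann sum approximates $\int_{\Gamma_k}g$ within $1/k$ simultaneously for \emph{every} $g\in C^1(V_k,k\mu_k)$. The resulting Riemann sum $R_k(\omega,z)$ uses the values $f(\omega,\zeta_j)$ at fixed partition points $\zeta_j\in\Gamma_k\subset U$, which are measurable in $\omega$ directly by hypothesis. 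For each $\omega$ one simply waits until $k$ is large enough that $U_k\subset U_\omega$ and $g_{\omega,z}\in C^1(V_k,k\mu_k)$; from then on the Cauchy formula and the lemma give $|R_k(\omega,z)-f(\omega,z)|<1/k$ on $K$. No detection of holomorphy domains, no Taylor series, and no piecing together is required.

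Your approach instead measurably detects the $\omega$ on which a fixed $(2/n)$-neighborhood works, reconstructs a holomorphic extension $g_\omega$ on $W_n$ via Taylor coefficients at dense points of $K$, and then glues the resulting approximants across the partition $\Omega=\bigcup_n(\Omega_n\setminus\Omega_{n-1})$. This is sound (modulo routine care: restrict to $n$ large enough that $D(z_j,2/n)\subset U$ so that $f(\omega,\zeta)$ is defined at the rational test points, and interpret the Taylor-series condition as convergence of partial sums so the defining sets are measurable). What you gain is closer to the spirit of the paper's later measurable extension theorem (Theorem~\ref{T:extension}), which is used to prove the stronger Theorem~\ref{conjRunge} where $f$ is only assumed random on $K$; there the detection problem is genuine and the paper handles it via Bergman spaces rather than Taylor series. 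For Theorem~\ref{Runge 2} itself, however, the measurability of $f(\cdot,\zeta)$ at contour points already comes for free, and the paper exploits this to give a much shorter argument.
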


\begin{proof} 
We may assume that $U_\omega$ is relatively compact in $U$ and that $f(\omega,\cdot)$ is holomorphic on a neighborhood of $\overline U_\omega.$ Thus $f(\omega,\cdot)\in C^1(U_\omega, M_\omega),$ for some finite $M_\omega.$

For  $(\omega,\zeta,z)\in\Omega\times U\times \C,$ 
put 
$$
	g_{\omega,z}(\zeta)=\frac{1}{2\pi i}\frac{f(\omega,\zeta)}{\zeta-z}.
$$
Take $\Gamma_\omega$ in $U_\omega$ such that $\Gamma_\omega$ has index  $1$ with respect to each $z\in K$ and index $0$ with respect to each $z\in \C\setminus U_\omega$. Then, by the Cauchy formula,
$$
f(\omega,z) = \int_{\Gamma_\omega}g_{\omega,z}(\zeta)d\zeta, \quad 
		\mbox{for} \quad z\in K.
$$

Let $U_k, k=1,2,\ldots, $ be a neighborhood basis of $K.$ 
For each $k,$ choose a $\Gamma_k\subset U_k\setminus K$ such that $\Gamma_k$ has index 1 with respect to each point of $K$
and index $0$ with respect to each $z\in\C\setminus U_k$.  Let $V_k$ be a bounded open neighborhood of $\Gamma_k$ in $U_k,$ whose closure is disjoint from $K.$ For each $k,$ there is a finite $\mu_k$ such that $1/(\zeta-z)$ as a function of $\zeta$ is in $C^{1}(V_k,\mu_k),$ for each $z\in K.$ 
For each $\omega,$ we have $f(\omega,\cdot)\in C^{1}(U_\omega,M_\omega),$ so $f(\omega,\cdot)\in C^{1}(U_k,M_\omega)$ for all $k$ such that $U_k\subset U_\omega.$ Hence 
there is a $k(\omega)$ such that, for all $k>k(\omega),$ we have that $U_k\subset U_\omega$ and  
$$
	g_{\omega,z}\in C^{1}(V_k,k\mu_k), \quad \forall  k\ge k(\omega),
		\quad \forall z\in K.
$$

In Lemma \ref{Runge lemma}, we replace $g$ by $g_{\omega,z},$ $V$ by $V_k,$ $\Gamma$ by $\Gamma_k,$ $M$ by $k\mu_k$ and $\varepsilon$ by $1/k.$ 
In the Riemann sums, we have the terms 
$$
	g_{\omega,z}(\zeta_j) = \frac{1}{2\pi i}\frac{f(\omega,\zeta_j)}{\zeta_j-z},
$$
and, by hypothesis, each $f(\cdot,\zeta_j)$ is measurable. Hence, 
the Riemann sums of $g_{\omega,z}$ are random rational functions pole-free on  $K,$
which by the lemma perform the appropriate approximation. 
This concludes the proof of the theorem. 
\end{proof}

Even Theorem~\ref{Runge 2} has a stronger hypothesis than necessary.
The following result shows that we can drop the open set $U$ in the statement of Theorem~\ref{Runge 2}.

\begin{theorem} \label{conjRunge} 
Let $K\subset\C$ be an arbitrary non-empty compact set, $(\Omega, \mathcal A)$ a measurable space and 
$f:\Omega\times K\rightarrow\C$ a random function. Suppose that,  for each $\omega\in \Omega,$  $f(\omega,\cdot)$ is the restriction to some open neighborhood $U_\omega$ of $K$ of some function $g_\omega$ holomorphic on $U_\omega.$ 
Then there is a sequence $r_j(\omega,z)$ of random rational functions pole-free on  $K$, such that,  
$$
\mbox{for each} \,\, \omega, \quad r_j(\omega,\cdot)\rightarrow f(\omega,\cdot) \quad \mbox{uniformly on} \quad K.
$$   
\end{theorem}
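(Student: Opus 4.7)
The main obstacle, compared with Theorem \ref{Runge 2}, is that $f$ is defined only on $\Omega\times K$, so we have no direct measurable access to the extensions $g_\omega$ at any point off $K$; in particular, the Cauchy-integral proof of Theorem \ref{Runge 2} cannot be reused verbatim, since it samples $f$ at the nodes of contours sitting outside $K$. My plan is to dispense with the Cauchy construction entirely and instead obtain the random rational approximants by a measurable selection argument: for each individual $\omega$ the classical Runge theorem applied to $g_\omega$ already supplies rational approximants pole-free on $K$, and the task is to pick them in an $\omega$-measurable way.

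First I would fix, once and for all, a countable sequence $\{\rho_n\}_{n\ge1}$ of rational functions pole-free on $K$ which is dense, under the sup norm $\|\cdot\|_K$, in the space of all rational functions pole-free on $K$. A concrete recipe is to take partial fractions with poles in a fixed countable dense subset of $\C\setminus K$ together with polynomial summands, all coefficients drawn from $\Q+i\Q$. By the classical Runge theorem applied to $g_\omega$, for every $\omega\in\Omega$ and every $\varepsilon>0$ there exists $n$ with $\|\rho_n-f(\omega,\cdot)\|_K<\varepsilon$.

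Next I would verify that for each $n$ the map $\omega\mapsto\|\rho_n-f(\omega,\cdot)\|_K$ is measurable. Fixing a countable dense subset $\{z_k\}$ of the compact metric space $K$, the continuity of $f(\omega,\cdot)$ on $K$ gives
$$
\|\rho_n-f(\omega,\cdot)\|_K=\sup_{k\ge1}|\rho_n(z_k)-f(\omega,z_k)|,
$$
a countable supremum of measurable functions of $\omega$. For each $j\ge1$ I then set
$$
N_j(\omega):=\min\bigl\{n\ge1:\|\rho_n-f(\omega,\cdot)\|_K<1/j\bigr\},
$$
which is well defined by the preceding paragraph and measurable since $\{N_j=n\}$ is a Boolean combination of the measurable sets $\{\|\rho_m-f(\omega,\cdot)\|_K<1/j\}$, $m\le n$. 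Finally, $r_j(\omega,z):=\rho_{N_j(\omega)}(z)$ is a rational function pole-free on $K$ for each $\omega$, with $\|r_j(\omega,\cdot)-f(\omega,\cdot)\|_K<1/j$; measurability of $r_j(\cdot,z)$ for fixed $z$ follows since $r_j(\omega,z)=\rho_n(z)$ on the measurable set $\{N_j=n\}$, and these sets partition $\Omega$. The only technical subtlety I anticipate is verifying that the countable set $\{\rho_n\}$ is genuinely dense in all rational functions pole-free on an arbitrary compact $K\subset\C$, but this reduces to approximating a single simple pole $1/(z-a)$ uniformly on $K$ by $1/(z-a_m)$ for a sequence $a_m\to a$ in the chosen dense subset of $\C\setminus K$, which is immediate from $\mathrm{dist}(a,K)>0$.
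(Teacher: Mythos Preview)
Your argument is correct, and it is genuinely different from the route taken in the paper. The paper deduces Theorem~\ref{conjRunge} from Theorem~\ref{Runge 2} via a measurable \emph{extension} theorem (Theorem~\ref{T:extension}): it shows, using Bergman spaces $A^2(U_n)$, weak compactness of their unit balls, Hahn--Banach separation (Lemma~\ref{L:convex}), and a measurable identity principle (Lemma~\ref{domain}), that $f$ extends to a function $F:\Omega\times\C\to\C$ which is measurable in $\omega$ for every $z\in\C$ and, for each $\omega$, holomorphic on some neighborhood of $K$. Once such an $F$ exists, Theorem~\ref{Runge 2} applies directly. By contrast, you bypass the extension problem entirely: you fix a single countable family $\{\rho_n\}$ dense in the rational functions pole-free on $K$, observe (via a countable dense subset of $K$) that $\omega\mapsto\|\rho_n-f(\omega,\cdot)\|_K$ is measurable, and select the first $\rho_n$ that is $1/j$-close. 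This is a measurable selection argument that never leaves $K$.

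What each approach buys: the paper's route yields the extension theorem as a result of independent interest, and keeps the Cauchy-integral construction of the approximants intact. Your approach is considerably more elementary (no Bergman spaces, no Hahn--Banach, no identity principle) and in fact delivers more than the stated conclusion: since $\|r_j(\omega,\cdot)-f(\omega,\cdot)\|_K<1/j$ holds for \emph{every} $\omega$, you obtain $f\in R_\Omega^{\unif}(K)$ directly, i.e.\ Theorem~\ref{Runge}, without passing through the machinery of Section~\ref{GeneralizedmeasurableFunctions}. One minor remark: your density discussion treats only simple poles $1/(z-a)$; the same estimate with $\mathrm{dist}(a,K)>0$ handles $1/(z-a)^k$ for all $k\ge1$ (or, alternatively, difference quotients of simple-pole terms recover the higher-order ones), so the countable family $\{\rho_n\}$ is indeed dense.
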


We shall establish this result in the next section.
In Section \ref{GeneralizedmeasurableFunctions}, 
we shall prove one of our main results stating that, under appropriate hypotheses, separately uniform convergence implies joint uniform convergence.

\section{A measurable extension theorem}

In view of Theorem \ref{Runge 2}, to establish Theorem \ref{conjRunge}, it suffices to prove the following measurable extension theorem.

\begin{theorem}\label{T:extension}
Let $K$ be a compact subset of $\C,$ let $(\Omega,\mathcal A)$
be a measurable space, and let $f : \Omega\times K \rightarrow \C$
be a function such that:
\begin{enumerate}[(i)$'$]
\item $f(\cdot,z)$ is measurable for each $z\in K$,
\item for each $\omega\in\Omega,$ there exist an open neighborhood $U_\omega$ of $K$ and a function $g_\omega$ holomorphic on $U_\omega$ such that $g_\omega|_K = f(\omega,\cdot)$.
\end{enumerate}
Then there exists a function $F : \Omega\times \C\rightarrow \C$ such that:
\begin{enumerate}[(i)]
\item $F(\cdot,z)$ is measurable for each $z \in\C$,
\item  for each $\omega\in\Omega,$ there exist an open neighborhood $U_\omega$ of $K$ and a function $g_\omega$ holomorphic on $U_\omega$ such that $g_\omega|_K = F(\omega,\cdot)$,
\item $F|_{\Omega\times K} = f.$
\end{enumerate}
\end{theorem}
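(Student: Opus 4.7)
The plan is to construct $F$ from countable local power-series data, recovering the germ of $g_\omega$ near $K$ from the values of $f(\omega,\cdot)$ on $K$ alone, and patching these local expansions together by a measurable selection rule.

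First I would decompose $K = K_{\mathrm{iso}} \sqcup K_{\mathrm{acc}}$ into its (at most countable) isolated points and its closed accumulation part; for each $z_0 \in K_{\mathrm{iso}}$ record the separation $\delta_{z_0} := d(z_0, K \setminus \{z_0\}) > 0$. Next, pick a countable dense subset $\{z_j\}_{j \ge 1}$ of $K_{\mathrm{acc}}$. Since each $z_j$ is an accumulation point of $K$ at which $g_\omega$ is holomorphic, the Taylor coefficient $a_{j,k}(\omega) := g_\omega^{(k)}(z_j)/k!$ can be expressed as the limit of Newton divided differences of $f(\omega, \cdot)$ evaluated on $(k+1)$-tuples of distinct points of $K$ converging to $z_j$. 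Each such divided difference is a finite rational expression in the measurable quantities $f(\omega, w)$, so $a_{j,k}$ is measurable in $\omega$ as a pointwise limit of measurable functions.

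Now enumerate expansion data $(w_\ell, S_\ell, \rho_\ell)_{\ell \ge 1}$, listing the accumulation centers first ($w_\ell = z_j$, $S_\ell(\omega, z) = \sum_{k \ge 0} a_{j,k}(\omega)(z - z_j)^k$, and $\rho_\ell(\omega) = 1/\limsup_k \abs{a_{j,k}(\omega)}^{1/k}$, the Cauchy--Hadamard radius), followed by the isolated centers ($w_\ell = z_0 \in K_{\mathrm{iso}}$, $S_\ell(\omega, z) \equiv f(\omega, z_0)$, and $\rho_\ell(\omega) \equiv \delta_{z_0}/3$). Each $\rho_\ell$ is measurable. For $(\omega, z) \in \Omega \times \C$ set
$$\ell^*(\omega, z) := \min \{\ell \ge 1 : \abs{z - w_\ell} < \rho_\ell(\omega)\},$$
and define $F(\omega, z) := S_{\ell^*(\omega,z)}(\omega, z)$ when this minimum exists, else $F(\omega, z) := 0$. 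Measurability of $F(\cdot, z)$ is immediate, since each level set $\{\ell^* = \ell_0\}$ is cut out by measurable inequalities. Agreement $F(\omega, z) = f(\omega, z)$ on $K$ then holds case by case: for $z \in K_{\mathrm{acc}}$ density supplies a $z_j$ close enough that $\abs{z - z_j} < \rho_j(\omega)$, whence $S_j(\omega, z) = g_\omega(z) = f(\omega, z)$; for $z = z_0 \in K_{\mathrm{iso}}$ either a reaching accumulation Taylor disc or the isolated constant expansion at $z_0$ delivers $f(\omega, z_0)$.

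The hardest part will be condition (ii): producing a single open neighborhood of $K$ on which $F(\omega, \cdot)$ is holomorphic, rather than a mere patchwork of consistent local expansions. Wherever two accumulation discs overlap, the two Taylor series both equal $g_\omega$, so no conflict arises there. The only potential seam is the boundary between a constant isolated extension and a neighboring accumulation zone. The enumeration convention (accumulations first), together with the cap $\rho_\ell(\omega) \le \delta_{z_0}/3$ on isolated radii, will force any such seam to lie at positive distance from $K$ itself. Thus $F(\omega, \cdot)$ is locally holomorphic at every point of $K$, and compactness yields an open neighborhood of $K$ on which $F(\omega, \cdot)$ is holomorphic, establishing (ii).
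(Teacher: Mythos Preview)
There is a genuine gap. The inference ``$|z - z_j| < \rho_j(\omega)$, whence $S_j(\omega, z) = g_\omega(z)$'' is false in general: the Cauchy--Hadamard radius $\rho_j(\omega)$ depends only on the \emph{germ} of $g_\omega$ at $z_j$ and can greatly exceed the diameter of the component of $U_\omega$ containing $z_j$. On the part of the disc $D(z_j,\rho_j(\omega))$ lying outside that component, $S_j(\omega,\cdot)$ is merely an analytic continuation of that germ and has no reason to agree with $g_\omega$, or with $f$ on $K$. The same error undermines the later claim that two overlapping accumulation discs ``both equal $g_\omega$,'' and hence also your argument for~(ii).

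A one-point $\Omega$ already exhibits the failure. Take $K=[-2,-1]\cup[1,2]$ (so $K_{\mathrm{iso}}=\emptyset$), let $U_\omega$ be a disjoint union of neighborhoods of the two intervals, and set $g_\omega\equiv 0$ near $[-2,-1]$ and $g_\omega\equiv 1$ near $[1,2]$. For any $z_j\in[-2,-1]$ your divided differences (computed along points of $K$ near $z_j$, all of which lie in $[-2,-1]$) vanish identically, so $S_j\equiv 0$ and $\rho_j(\omega)=\infty$. Whichever interval contains the first enumerated centre $z_1$, one gets $\ell^*(\omega,z)=1$ for every $z\in\C$, so $F$ is the corresponding constant and violates (iii) on the other interval.

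The obvious repair---capping $\rho_j(\omega)$ at $d(z_j,\partial U_\omega)$---is unavailable, since $U_\omega$ is not given to you measurably. The paper circumvents the difficulty by first stratifying $\Omega$ into measurable layers $\Omega_n=\{\omega: f(\omega,\cdot)\in A^2(U_n)|_K\}$ for the \emph{fixed} neighborhoods $U_n=\{z:d(z,K)<1/n\}$ (measurability of $\Omega_n$ coming from a Bergman-space weak-compactness argument), and then, on each $\Omega_n$, extending $f$ to all of $U_n$ component by component via the identity principle rather than by Taylor discs.
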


The proof of Theorem~\ref{T:extension} will occupy the rest of this section. 

Given a Banach space $X,$ we write $X^*$ for the dual space of $X$.
The following result is well known.

\begin{lemma}\label{L:convex} Let $X$ be a separable Banach space and let $C$ be a closed convex subset of $X.$ Then
there exist sequences $(\phi_n)_{n\ge 1} \in X^*$ and $(\alpha_n)_{n\ge 1} \in\R$ such that
\begin{equation}\label{C}
	C = \bigcap_{n\ge 1}\{x\in X: \re\phi_n(x)\le\alpha_n\}.
\end{equation}
\end{lemma}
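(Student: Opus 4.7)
My plan is to prove this by combining the Hahn--Banach separation theorem with the separability of $X$ to reduce an a priori uncountable intersection of closed half-spaces to a countable one. The geometric Hahn--Banach theorem says that any closed convex set $C$ is the intersection of all closed affine half-spaces containing it; the only work is to extract countably many of them whose intersection is still exactly $C$.

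First I would dispose of the trivial case $C=\emptyset$ by taking, say, $\phi_n\equiv 0$ and $\alpha_n=-1$. Assume now $C\ne\emptyset$. Since $X$ is separable (and metric), so is the open set $X\setminus C$; pick a countable dense subset $(y_n)_{n\ge 1}$ of $X\setminus C$. For each $n$, set
$$
  \delta_n := \tfrac{1}{2}\, d(y_n,C) > 0,
$$
so that the open ball $B_n := \{x\in X:\|x-y_n\|<\delta_n\}$ is convex, open, and disjoint from $C$. By the geometric form of Hahn--Banach applied to the disjoint convex sets $B_n$ (open) and $C$, there exist $\phi_n\in X^*$ and $\alpha_n\in\R$ such that
$$
  \re\phi_n(x)\le\alpha_n \quad\text{for all }x\in C,\qquad
  \re\phi_n(y)>\alpha_n \quad\text{for all }y\in B_n.
$$

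Denote $H_n := \{x\in X:\re\phi_n(x)\le\alpha_n\}$. The inclusion $C\subseteq\bigcap_n H_n$ is immediate. For the reverse inclusion, let $z\notin C$ and set $r:=d(z,C)>0$. By density, choose $y_n$ with $\|z-y_n\|<r/3$; then $d(y_n,C)\ge r-r/3=2r/3$, giving $\delta_n>r/3>\|z-y_n\|$, so $z\in B_n$ and hence $\re\phi_n(z)>\alpha_n$, i.e.\ $z\notin H_n$. This establishes \eqref{C}.

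There is no real obstacle here: the only point requiring a moment's thought is the triangle-inequality argument ensuring that the countable family $\{B_n\}$ actually covers $X\setminus C$ (so the separating half-spaces selected from it suffice). Everything else is a direct invocation of the standard Hahn--Banach separation of an open convex set from a disjoint convex set.
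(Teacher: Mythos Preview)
Your proof is correct and follows essentially the same strategy as the paper's: use separability to produce a countable family of open balls disjoint from $C$ whose union is $X\setminus C$, then separate each ball from $C$ by Hahn--Banach. The only cosmetic difference is that the paper selects its balls from a countable base for the topology of $X$ (so the covering of $X\setminus C$ is automatic), whereas you center your balls at a countable dense subset of $X\setminus C$ and supply the short triangle-inequality argument to verify the covering; both routes amount to the same thing. One trivial edge case you might add for completeness is $C=X$, where the complement is empty and you can simply take $\phi_n\equiv 0$, $\alpha_n=0$.
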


\begin{proof}
As $X$ is separable, there is a countable base for its topology consisting of open balls. Let
$(B_n)$ be an enumeration of the set of these balls having the property that $B_n\cap C = \emptyset.$ By the
Hahn-Banach theorem \cite[Theorem 3.4(a)]{Ru1991}, there exist $\phi_n\in X^*$ and 
$\alpha_n\in\R,$ such that
$$ 
	\re\phi_n(x)\le \alpha_n \quad (x \in C) \quad \mbox{and} 
		\quad   \re\phi_n(x)>\alpha_n  \quad (x \in B_n).
$$
Then (\ref{C}) holds.
\end{proof}

Given an open subset $U$ of $\C,$ we denote by $A^2(U)$ the Bergman space on $U$, namely the subspace of $L^2(U)$ consisting of functions holomorphic on $U$.
It is well known that $A^2(U)$ is a closed subspace of $L^2(U),$ and therefore a Hilbert space. Also, convergence of a sequence in $A^2(U)$ implies uniform convergence on each compact subset of $U.$

\begin{lemma}\label{Bergman}
Let $K$ be a compact subset of $\C,$ let $(\Omega,\mathcal A)$ be a measurable space, and let $f:\Omega\times K\rightarrow\C$  be a function such that:
\begin{itemize}
\item $f(\cdot,z)$ is measurable for all $z\in K$,
\item $f(\omega,\cdot)\in C(K)$ for all $\omega\in\Omega$.
\end{itemize}
Then, for each open neighborhood $U of K,$ we have $\{\omega:f(\omega,\cdot)\in A^2(U)|_K\}\in\mathcal A.$
\end{lemma}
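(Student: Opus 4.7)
The plan is to push the measurability question from $\Omega$ over to the Banach space $C(K)$ and then identify the target set as a Borel subset. Since $f(\omega,\cdot)\in C(K)$ for each $\omega$ and $f(\cdot,z)$ is measurable for each $z\in K$, Proposition \ref{measurable element} makes the map $F\colon\Omega\to C(K)$ defined by $F(\omega):=f(\omega,\cdot)$ measurable. Hence it suffices to show that
\[
	A^2(U)|_K := \{h|_K : h\in A^2(U)\}
\]
is a Borel subset of $C(K)$, for then $\{\omega : f(\omega,\cdot)\in A^2(U)|_K\} = F^{-1}(A^2(U)|_K)\in \mathcal A$.

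To do this I would stratify $A^2(U)|_K$ by the $A^2$-norm. For each positive integer $M$ set
\[
	T_M := \{h|_K : h\in A^2(U),\ \|h\|_{A^2(U)}\le M\},
\]
so that $A^2(U)|_K = \bigcup_{M\ge 1} T_M$. It thus suffices to check that each $T_M$ is closed in $C(K)$, as this exhibits $A^2(U)|_K$ as an $F_\sigma$, and in particular a Borel, subset. To verify closedness, suppose $(h_n)\subset A^2(U)$ satisfies $\|h_n\|_{A^2(U)}\le M$ and $h_n|_K\to g$ uniformly on $K$. The sub-mean-value inequality for the subharmonic function $|h_n|^2$ shows that $\{h_n\}$ is locally uniformly bounded on $U$, so Montel's theorem yields a subsequence $h_{n_k}$ converging uniformly on compact subsets of $U$ to some holomorphic function $h$ on $U$. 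Fatou's lemma then gives $\|h\|_{A^2(U)}\le \liminf_k \|h_{n_k}\|_{A^2(U)}\le M$, so $h\in A^2(U)$ with norm at most $M$; and since $K$ is compact in $U$, $h_{n_k}|_K\to h|_K$ uniformly on $K$, forcing $g = h|_K\in T_M$.

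The main, and really only nontrivial, step is the closedness of $T_M$; it rests on the joint use of Montel's theorem (to extract a holomorphic limit) and Fatou's lemma (to transfer the $A^2$-bound to that limit). The remainder is bookkeeping: Proposition \ref{measurable element} converts measurability on $\Omega$ to Borel measurability on $C(K)$, and a countable union of closed sets is Borel.
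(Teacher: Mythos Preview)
Your proof is correct. Both your argument and the paper's reduce to showing that the set $T_M=\{h|_K:h\in A^2(U),\ \|h\|_{A^2(U)}\le M\}$ is closed in $C(K)$ and then writing $A^2(U)|_K=\bigcup_M T_M$; the difference lies in how the closedness is established and how it is converted into measurability on~$\Omega$. You prove closedness by a concrete complex-analytic route (sub-mean-value estimate $\Rightarrow$ local uniform bounds $\Rightarrow$ Montel, then Fatou to retain the $A^2$-bound), and then simply pull back the Borel set through the measurable map $F:\Omega\to C(K)$ furnished by Proposition~\ref{measurable element}. The paper instead argues abstractly: the restriction map $A^2(U)\to C(K)$ is weak--weak continuous and the closed unit ball of the Hilbert space $A^2(U)$ is weakly compact, so its image is weakly compact and hence norm-closed; it then invokes Lemma~\ref{L:convex} to express this closed convex set as a countable intersection of half-spaces and checks measurability of each functional $\omega\mapsto\phi_n(f(\omega,\cdot))$. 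Your path is more elementary and avoids Lemma~\ref{L:convex} altogether; the paper's path, while slightly longer, is the one that justifies the later Remark that any reflexive Banach space of holomorphic functions with continuous point evaluations would serve equally well in place of $A^2(U)$.
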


\begin{proof}
Fix $U.$ The restriction map $R_K : A^2(U)\rightarrow C(K)$ is linear and continuous, so it is also continuous with respect to the weak topologies on $A^2(U)$ and $C(K).$ The  closed  unit ball $B$ of $A^2(U)$ is
weakly compact, because $A^2(U)$ is a Hilbert space. Therefore $C := R_K(B)$ is weakly compact in $C(K).$ Hence $C$ is  closed in $C(K).$ Clearly it is also convex. By Lemma~\ref{L:convex}, there are sequences
$(\phi_n)\in C(K)^*$ and $(\alpha_n)\in\R$ such that $C = \cap_n\{g\in C(K) : \re \phi_n(g)\le \alpha_n\}.$ It follows that
$$
	\{\omega\in\Omega: f(\omega,\cdot)\in C\} =
		\bigcap_{n\ge 1}\{\omega\in\Omega:\re \phi_n(f(\omega,\cdot))\le\alpha_n\}.
$$
By Proposition~\ref{measurable element}  each of the functions $\omega\mapsto\phi_n(f(\omega,\cdot))$ is measurable. Therefore
$$
	\{\omega: f(\omega,\cdot)\in C\}\in \mathcal A.
$$
Repeating with $f$ replaced by $f/m,$ where $m$ is a positive integer, we get
$$
	\{\omega: f(\omega,\cdot)\in mC\}\in \mathcal A \quad (m\ge 1).
$$
Finally, we deduce that
$$
	\{\omega:f(\omega,\cdot)\in A^2(U)|_K\} =   
		\bigcup_{m\ge 1}\{\omega: f(\omega,\cdot)\in mC\}       \in\mathcal A.
$$
\end{proof}

The next lemma is a sort of measurable identity principle.

\begin{lemma}\label{domain}
Let $D$ be a domain in $\C,$ let  $(\Omega,\mathcal A)$ be a measurable space, and let $f:\Omega\times D\to\C$ be
a function such that $f(\omega,\cdot)$ is holomorphic in $D$ for each $\omega\in\Omega.$ Define
$$
	M:= \{z\in D: f(\cdot,z)\text{~is measurable}\}.
$$
If $M$ has a limit point in $D,$ then $M = D.$
\end{lemma}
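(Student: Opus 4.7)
The plan is to prove $M=D$ by first showing that every limit point of $M$ in $D$ is in fact an interior point of $M$, and then invoking the connectedness of $D$. The crux is to extract measurably the Taylor coefficients at such a limit point, using only values of $f$ at points that already lie in $M$.

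Fix a limit point $z_0\in D$ of $M$ and a radius $\rho>0$ with $\overline{B(z_0,\rho)}\subset D$. Inside $M\cap B(z_0,\rho)\setminus\{z_0\}$ I can choose pairwise distinct points $w_1,w_2,\ldots$ with $w_k\to z_0$. Since $f(\omega,\cdot)$ is holomorphic, hence continuous, at $z_0$,
$$f(\omega,z_0)=\lim_{k\to\infty}f(\omega,w_k) \quad\text{for each }\omega\in\Omega,$$
so $f(\cdot,z_0)$ is a pointwise limit of measurable functions and is therefore measurable. More generally, for each $n\ge 0$ and each $m\ge 1$, form the divided difference
$$\Delta_n^{(m)}(\omega) := \sum_{j=0}^{n} \frac{f(\omega,w_{m+j})}{\prod_{\substack{0\le i\le n\\ i\ne j}}(w_{m+j}-w_{m+i})}.$$
Each $\Delta_n^{(m)}$ is a finite linear combination of the measurable functions $f(\cdot,w_{m+j})$, so is itself measurable. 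As $m\to\infty$ the nodes $w_m,\ldots,w_{m+n}$ all tend to $z_0$, and the standard limiting identity for divided differences of a holomorphic function gives $\Delta_n^{(m)}(\omega)\to f^{(n)}(\omega,z_0)/n!$ for every $\omega$. Hence each Taylor coefficient $a_n(\omega):=f^{(n)}(\omega,z_0)/n!$ is measurable in $\omega$.

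For any $z\in B(z_0,\rho)$, the Taylor expansion $f(\omega,z)=\sum_{n\ge 0}a_n(\omega)(z-z_0)^n$ converges for every $\omega$, and its partial sums are measurable in $\omega$, so $f(\cdot,z)$ is measurable; thus $B(z_0,\rho)\subset M$, and $z_0$ lies in $\intr_D(M)$, the interior of $M$ relative to $D$. This set is open by definition, nonempty because we have just produced a disk inside it, and any limit point of $\intr_D(M)$ in $D$ is a fortiori a limit point of $M$ and so, by the same argument, belongs to $\intr_D(M)$; hence $\intr_D(M)$ is also relatively closed in $D$. Connectedness of $D$ then forces $\intr_D(M)=D$, and therefore $M=D$. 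The main obstacle is precisely the measurability of the Taylor coefficients at a limit point $z_0$ that need not, a priori, belong to $M$; the divided-difference formula is tailored to this situation because it uses only values of $f$ at the chosen nodes $w_k\in M$, which are measurable by hypothesis.
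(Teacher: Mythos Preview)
Your proof is correct and follows the same overall strategy as the paper's: show that the Taylor coefficients of $f(\omega,\cdot)$ at a limit point $z_0$ of $M$ are measurable in $\omega$, use the Taylor expansion to fill in an entire disk around $z_0$ inside $M$, and then invoke connectedness of $D$ via an open--closed set.

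The differences are purely technical. To obtain the measurability of $f^{(n)}(\cdot,z_0)$, the paper uses Taylor's theorem with remainder and an induction on $n$ along a single sequence $z_k\to z_0$ in $M$, whereas you use the divided-difference formula on $n+1$ coalescing nodes drawn from $M$, which delivers all the coefficients without induction but relies on the (standard) limiting identity $g[x_0,\ldots,x_n]\to g^{(n)}(z_0)/n!$ for holomorphic $g$. For the connectedness step, the paper works with the auxiliary set $N=\{z\in D: f^{(n)}(\cdot,z)\text{ measurable for all }n\ge0\}$ and checks directly that $N$ is open and closed in $D$; you instead work with $\intr_D(M)$, which is equally convenient once you have shown that every limit point of $M$ is interior to $M$. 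Both routes are short and yield the same conclusion.
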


\begin{proof}
In what follows, we write $f^{(n)}(\omega,z)$ for the $n$-th derivative of $f(\omega,z)$ with respect to $z$. 
Define
\[
N:=\{z\in D: f^{(n)}(\cdot,z) \text{~is measurable for all~}n\ge0\}.
\]
We first show that each limit point of $M$ in $D$ belongs to $N$. 
Let $z^*$ be such a limit point, say $z^*=\lim_{k\to\infty}z_k$, where $(z_k)$ are points in $M\setminus\{z^*\}$.
By Taylor's theorem, for each $n\ge0$ and each $\omega\in\Omega$, we have
\[
f^{(n)}(\omega,z^*)/n! = 
\lim_{k\to\infty}\frac{f(\omega,z_k)-\sum_{m=0}^{n-1}f^{(m)}(\omega,z^*)/m!(z_k-z^*)^m}{(z_k-z^*)^n}.
\]
By induction on $n$, it follows that  $\omega\mapsto f^{(n)}(\omega,z^*)$ is measurable for all $n\ge0$.
In other words, $z^*\in N$.

Next, we show $N$ is open in $D$. Let $z_0\in D$ and 
$r:=\text{dist}(z_0,\partial D)$. Then, for all $n\ge0$ and $|z-z_0|<r$ and $\omega\in\Omega$, we have
\[
f^{(n)}(\omega,z)=\sum_{m\ge 0} \frac{f^{(n+m)}(\omega,z_0)}{m!}(z-z_0)^m.
\]
Hence, if $z_0\in N$, then $z\in N$ for all $z$ with $|z-z_0|<r$.
So $N$ is indeed open in $D$.

Lastly we show that $N$ is closed in $D$. If $z_k\to z_0$ in $D$, 
then, for each $n\ge0$ and $\omega\in\Omega$, we have 
\[
f^{(n)}(\omega,z_0)=\lim_{k\to\infty}f^{(n)}(\omega, z_k).
\]
Hence, if $z_k\in N$ for all $k$, then also $z_0\in N$.
Thus $N$ is indeed closed in $D$.

To conclude: if $M$ contains a limit point in $D$, then $N$ is non-empty, open and closed in $D$, so,
as $D$ is connected, we must have $N=D$. Clearly $N\subset M$, hence also $M=D$.
\end{proof}

\begin{lemma}\label{U}
Let $K$ be a compact subset of $\C$, let $U$ be an open neighborhood of $K$, let $(\Omega,\mathcal A)$ be  a measurable space and let $f : \Omega\times \C\to\C$ be a function such that: 
\begin{itemize}
\item $f(\cdot,z)$ is measurable for all $z\in K,$
\item  for each $\omega\in\Omega$, there exists a function $g_\omega$ holomorphic in $U$ such that $g_\omega|K=f(\omega,\cdot)$.
\end{itemize}
Then there exists a function $F : \Omega\times U\rightarrow \C$ such that:
\begin{itemize}
\item $F(\cdot,z)$ is measurable for all $z\in U,$
\item $F(\omega,\cdot)$ is holomorphic in $U$ for all $\omega\in\Omega$,
\item $F|_{\Omega\times K} = f.$
\end{itemize}
\end{lemma}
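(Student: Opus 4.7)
The plan is to construct $F$ separately on each connected component of $U$ and then glue. The key preliminary observation is this: because $K\subset U$, for each connected component $V$ of $U$ the set $V\cap K$ is either empty, non-empty finite, or has a limit point in $V$. Indeed, suppose for contradiction $V\cap K$ were infinite and discrete in $V$. Then $V\cap K$, being bounded and infinite, has a limit point $p^*\in\C$; $p^*\notin V$ by discreteness, and $p^*\in K$ since $K$ is closed, so $p^*\in K\setminus V\subset U$. Some open ball $B$ around $p^*$ thus lies in $U$. Since $p^*\in\overline V$, $B$ meets $V$, and since $B$ is connected while $V$ is a component of $U$, we get $B\subset V$. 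So $p^*\in V$, a contradiction.

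\emph{Case A: $V\cap K$ has a limit point in $V$.} By the identity principle, $g_\omega|_V$ is uniquely determined by $f(\omega,\cdot)|_{V\cap K}$, so $F(\omega,z):=g_\omega(z)$ is well-defined and holomorphic on $V$. Measurability of $F(\cdot,z)$ is given on $V\cap K$ by hypothesis, and Lemma~\ref{domain} applied with $D=V$ propagates it to every $z\in V$.

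\emph{Case B: $V\cap K=\emptyset$.} Set $F\equiv 0$ on $V$. \emph{Case C: $V\cap K=\{p_1,\ldots,p_m\}$ is finite and non-empty.} Take $F(\omega,z)$ to be the Lagrange interpolation polynomial $\sum_{j=1}^m f(\omega,p_j)\prod_{k\ne j}(z-p_k)/(p_j-p_k)$; its coefficients are $\C$-linear combinations of the measurable functions $f(\cdot,p_j)$, so $F(\cdot,z)$ is measurable for every $z\in V$, and $F(\omega,\cdot)$ is a polynomial, hence holomorphic on $V$.

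Assembling these three possibilities across all components of $U$ yields $F$ with the stated properties. The principal step is the preliminary observation that rules out the infinite-discrete configuration; without it, one would be forced into a much more elaborate measurable Mittag-Leffler or Weierstrass interpolation on $V$, where the delicate point would be choosing the convergence factors measurably in $\omega$ while preserving convergence for every $\omega$.
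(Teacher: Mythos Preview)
Your proof is correct and follows essentially the same route as the paper: decompose $U$ into connected components, and on each component handle the three cases (empty, finite, infinite intersection with $K$) by zero, Lagrange interpolation, and the unique holomorphic extension combined with Lemma~\ref{domain}, respectively. Your explicit preliminary observation---that an infinite $V\cap K$ must have a limit point in $V$---is a point the paper glosses over (it simply invokes the identity principle when $K\cap D$ is infinite), so your write-up is in fact slightly more complete on that step.
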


\begin{proof}
It is enough to construct $F$ on $\Omega\times D$ for each connected component $D$ of $U.$ There are three cases to consider. 

If $K\cap D =\emptyset,$ then we can simply take  $F\equiv 0.$ 

If $K\cap D$  is non-empty and finite, say $K\cap D  = \{z_1,\ldots, z_n\},$ then we fix polynomials $p_1,\ldots,p_n$
such that $p_j(z_k) = \delta_{jk},$ and define
$$
	F(\omega,z) := \sum_{j=1}^n f(\omega, z_j)p_j(z)
		\quad ((\omega,z)\in\Omega\times U).
$$
It is easy to see that this $F$ has the required properties.

Finally, if $K\cap D$ is infinite, then, for each $\omega\in\Omega,$ we define $F(\omega,\cdot)$ to be the holomorphic extension of $f(\omega,\cdot)$ to $D,$ which exists by hypothesis, and is unique by the identity  principle. By Lemma~\ref{domain}, the
function $F(\cdot,z)$ is measurable for all $z\in D.$ The other two properties required of $F$ are clear.
\end{proof}

We now have all the ingredients needed to prove Theorem~\ref{T:extension}.

\begin{proof}[Proof of Theorem~\ref{T:extension}]
For each $n\ge 1,$ define
\begin{align*}
U_n &:= \{z\in \C : \text{dist}(z,K) < 1/n\},\\
\Omega_n &:= \{\omega\in\Omega: f(\omega,\cdot) \in A^2(U_n)|_K\}.
\end{align*}
By Lemma \ref{Bergman}, we have $\Omega_n\in \mathcal A.$ Thus, if we set $\mathcal A_n := \{A\cap\Omega_n: A\subset\mathcal A\},$ then $(\Omega_n,\mathcal A_n)$ is a
measurable space. By Lemma~\ref{U}, there exists a function $F_n : \Omega_n\times U_n \rightarrow\C$ such that
\begin{itemize}
\item for each $z\in U_n,$ the function $F_n(\cdot, z)$ is measurable,
\item for each $\omega\in \Omega_n,$ the function $F_n(\omega,\cdot)$ is holomorphic on $U_n$,
\item $F_n = f$ on $\Omega_n\times K.$
\end{itemize}
Clearly we have 
$\Omega_1\subset\Omega_2\subset\Omega_3\subset\cdots,$ and the hypothesis (ii)$'$ of the theorem implies that $\cup_{n\ge 1}\Omega_n=\Omega.$ Define $F : \Omega\times\C\rightarrow\C$ by
$$
	F:= \sum_{n\ge 1}F_n1_{(\Omega_n\setminus\Omega_{n-1})\times U_n},
$$
where, for convenience, we have set $\Omega_0:=\emptyset.$ It is routine to check that this function $F$ satisfies the conclusions of the theorem.
\end{proof}

\begin{remark} The Bergman spaces $A^2(U)$ play only an auxiliary role
in this proof. Any other Banach space $H(U)$ of holomorphic functions on $U$ would do just as well, provided that $H(U)$ is reflexive, that $H(U)$ contains all functions that are holomorphic on a neighborhood of $\overline{U}$, and that convergence in $H(U)$ implies uniform convergence on compact sets.
\end{remark}


\section{Random compact sets} 

Because of the Oka--Weil Theorem, the most important notions in complex approximation in several variables are those of polynomial  or rational convexity of compacta.  
First we state a few basic properties of random compact sets  and then show that the image of a  compact set by a random continuous function  is a random compact set. We also show that the polynomially and rationally convex hulls are transformations that preserve randomness, and that the Siciak extremal function and pluricomplex Green function of a random compact set are random functions.

For a metric space $(X,d)$, we denote by 
$\mathcal K(X)$ the family of compact subsets of $X$ and by 
$(\mathcal K^\prime(X),d_H)$ the space of {\em non-empty} compact subsets of $X$ equipped with the Hausdorff distance $d_H.$  We recall the following useful property. 

\begin{lemma}\label{ksep}
The spaces $\mathcal K^\prime(\R^n)$ and $\mathcal K^\prime(\C^n)$ are separable.
\end{lemma}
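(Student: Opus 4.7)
The plan is to exhibit an explicit countable dense subset of $\mathcal K^\prime(\R^n)$ with respect to the Hausdorff metric $d_H$; the statement for $\mathcal K^\prime(\C^n)$ then follows immediately by identifying $\C^n$ with $\R^{2n}$. The natural candidate is
$$\mathcal D := \{F\subset \Q^n : F \text{ is finite and non-empty}\},$$
which is countable, being the countable union over $m\ge 1$ of the (countable) sets of $m$-element subsets of $\Q^n$.

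To prove density, I would fix an arbitrary $K\in\mathcal K^\prime(\R^n)$ and $\varepsilon>0$, and construct an element of $\mathcal D$ within Hausdorff distance $\varepsilon$ of $K$ as follows. Since $K$ is compact, hence totally bounded, there exist finitely many points $x_1,\ldots,x_m\in K$ such that every point of $K$ lies within distance $\varepsilon/2$ of some $x_i$. Because $\Q^n$ is dense in $\R^n$, I can pick $q_i\in\Q^n$ with $\norm{q_i-x_i}<\varepsilon/2$ for each $i$, and set $F:=\{q_1,\ldots,q_m\}\in\mathcal D$.

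It then remains to verify both inclusions defining the bound $d_H(K,F)<\varepsilon$. On the one hand, each $q_i$ lies within $\varepsilon/2$ of the point $x_i\in K$, so $F$ is contained in the $\varepsilon$-neighborhood of $K$. On the other hand, given any $x\in K$, choose $i$ with $\norm{x-x_i}<\varepsilon/2$; then the triangle inequality gives $\norm{x-q_i}\le \norm{x-x_i}+\norm{x_i-q_i}<\varepsilon$, so $K$ is contained in the $\varepsilon$-neighborhood of $F$. Hence $d_H(K,F)<\varepsilon$, proving density.

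I do not expect any substantive obstacle: the argument is a textbook total-boundedness plus rational-approximation construction. The only minor points worth watching are that the candidate $F$ be non-empty (which is why we require $K\in\mathcal K^\prime(\R^n)$) and that the two directions of the Hausdorff containment both be checked; both are immediate from the triangle inequality above.
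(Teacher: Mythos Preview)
Your argument is correct and is the standard textbook proof. The paper itself gives no proof of this lemma: it is simply stated as a ``useful property'' that is recalled without justification, so there is nothing to compare against.
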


Let $g: X \rightarrow Y$ be a continuous function. We may extend it to a function $g^{\mathcal K^\prime}:\mathcal K^\prime(X)\rightarrow \mathcal K^\prime(Y),$  defined by setting $g^{\mathcal K^\prime}(Q):=g(Q),$ for each $Q\in\mathcal K^\prime(X).$  The following is well known.

\begin{lemma}\label{continuous extension}
Let $X$ and $Y$ be metric spaces. If $g:X\rightarrow Y$ is a continuous function on $X,$ then the extension $g^{\mathcal K^\prime}:\mathcal K^\prime(X)\rightarrow \mathcal K^\prime(Y)$ is a continuous function. 
\end{lemma}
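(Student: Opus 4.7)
The plan is to verify continuity of $g^{\mathcal K^\prime}$ by a direct $\varepsilon$-$\delta$ argument on the Hausdorff metric. First note that $g^{\mathcal K^\prime}$ is well-defined as a map into $\mathcal K^\prime(Y)$: if $Q\in \mathcal K^\prime(X)$, then $g(Q)$ is non-empty, and, being the continuous image of a compact set, is compact in $Y$.

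Fix $Q\in\mathcal K^\prime(X)$ and $\varepsilon>0$. The goal is to produce $\delta>0$ such that $d_H(Q,Q')<\delta$ forces $d_H(g(Q),g(Q'))\le\varepsilon$. For each $x\in Q$, use continuity of $g$ to choose $\delta_x>0$ with $g(B(x,\delta_x))\subset B(g(x),\varepsilon/2)$. The balls $\{B(x,\delta_x/2)\}_{x\in Q}$ cover $Q$, so by compactness one extracts a finite subcover corresponding to points $x_1,\dots,x_n$, and I set $\delta:=\min_{1\le i\le n}\delta_{x_i}/2$.

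Next I would verify both Hausdorff inclusions. Take any $x'\in Q'$: since $d_H(Q,Q')<\delta$, there exists $x\in Q$ with $d(x,x')<\delta$; pick $i$ with $d(x,x_i)<\delta_{x_i}/2$. Then $d(x',x_i)<\delta_{x_i}$, so $g(x')\in B(g(x_i),\varepsilon/2)$, and since $g(x_i)\in g(Q)$ this gives $d(g(x'),g(Q))<\varepsilon/2$. The reverse inclusion is symmetric: for $y\in Q$, choose $i$ with $y\in B(x_i,\delta_{x_i}/2)$ and $x'\in Q'$ with $d(y,x')<\delta$; then both $g(y)$ and $g(x')$ lie in $B(g(x_i),\varepsilon/2)$, and since $g(x')\in g(Q')$ we get $d(g(y),g(Q'))<\varepsilon$. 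Together these yield $d_H(g(Q),g(Q'))\le\varepsilon$, which is continuity at $Q$.

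The only subtlety --- and there is no real obstacle --- is the bookkeeping of radii: halving the cover radii ensures that a point perturbed by at most $\delta$ still lies inside some $B(x_i,\delta_{x_i})$, where the image under $g$ is $\varepsilon/2$-controlled. Observe that $g$ itself need not be uniformly continuous on all of $X$; the compactness of $Q$ is what supplies the needed finite control.
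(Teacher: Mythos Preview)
Your argument is correct. The paper itself does not supply a proof of this lemma; it simply introduces it with the remark that the result is well known, so there is nothing to compare your approach against. Your direct $\varepsilon$--$\delta$ verification via a finite subcover of $Q$ is exactly the standard route, and the halving-of-radii bookkeeping is handled correctly.
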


\begin{remark}\label{characterizations}
Since $\mathcal K^\prime(X)$ is a metric space, we may also consider it as a 
measurable 
space, where the measurable sets are the Borel subsets of $\mathcal K^\prime(X).$ 
We may also, as shown in Theorem D.6 of \cite{Mo}, characterize $\mathcal B(\mathcal K^\prime(X))$ as the $\sigma$-algebra generated by the sets $\{K\in \mathcal K^\prime(X) : K\subset G\}$ where  $G$ varies over the open sets of $X$. Alternatively, the Borel sets can be generated by the sets $\{K\in \mathcal K^\prime(X) : K\cap G\neq \emptyset\}$ where again $G$ varies over the open sets of $X$.
\end{remark}

A random compact set  in $X$ is a  measurable function $k:\Omega\rightarrow\mathcal K^\prime(X).$ 
If $k_j:\Omega\rightarrow \mathcal K^\prime(X), \, j=1,2,$ are two random compact sets, we denote by  $k_1\cup k_2$ the function $\Omega\rightarrow\mathcal K^\prime(X),$ defined by $(k_1\cup k_2)(\omega):=k_1(\omega)\cup k_2(\omega),$ for $\omega\in\Omega.$ 

Using the characterization of Remark \ref{characterizations}, one can prove the following lemmas.

\begin{lemma}\label{cup}
If $k_j:\Omega\rightarrow \mathcal K^\prime(X), \, j=1,2,$ are two random compact sets, then $k_1\cup k_2$ is a random compact set. We also have the countable intersection of random compact sets  is a random compact set. 
\end{lemma}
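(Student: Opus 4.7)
The plan is to exploit the two characterizations of $\mathcal B(\mathcal K^\prime(X))$ recorded in Remark~\ref{characterizations}: it is generated either by the hitting sets $\{K:K\cap G\neq\emptyset\}$ or by the containment sets $\{K:K\subset G\}$, with $G$ ranging over the open subsets of $X$. The union is handled by the hitting characterization: for every open $G$,
$$
\{\omega:(k_1\cup k_2)(\omega)\cap G\neq\emptyset\}=\{\omega:k_1(\omega)\cap G\neq\emptyset\}\cup\{\omega:k_2(\omega)\cap G\neq\emptyset\},
$$
a union of two measurable sets.

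For the countable intersection $K(\omega):=\bigcap_j k_j(\omega)$, I switch to the containment characterization. The finite intersection property for compact sets, applied to the family $\{k_j(\omega)\cap G^c\}_j$ of closed subsets of the compact set $k_1(\omega)\cap G^c$, yields
$$
K(\omega)\subset G \iff \exists\, n\ge 1:\ \bigcap_{j=1}^n k_j(\omega)\subset G,
$$
so $\{\omega:K(\omega)\subset G\}$ is the countable union of the events $\{\omega:\bigcap_{j\le n}k_j(\omega)\subset G\}$. By induction on $n$, the proof reduces to showing that $\{\omega:k_1(\omega)\cap k_2(\omega)\subset G\}$ is measurable for every open $G$. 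This binary step is the main obstacle, since the intersection operation is not continuous in the Hausdorff metric, so one cannot simply invoke Lemma~\ref{continuous extension}.

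To overcome this, I work in the separable setting (as in Lemma~\ref{ksep}) and encode the containment through the continuous scalar function
$$
h(\omega,x):=\max\bigl\{d(x,k_1(\omega)),\,d(x,k_2(\omega)),\,d(x,G^c)\bigr\}.
$$
Since the three sets are closed, $h(\omega,x)=0$ precisely when $x\in k_1(\omega)\cap k_2(\omega)\cap G^c$; and since $k_1(\omega)$ is compact, a minimising sequence for $h(\omega,\cdot)$ admits a convergent subsequence whose limit lies in that triple intersection. This gives the equivalence
$$
k_1(\omega)\cap k_2(\omega)\subset G \iff \inf_{x\in X}h(\omega,x)>0.
$$
By continuity of $h(\omega,\cdot)$ and density of a fixed countable set $\{y_i\}\subset X$, the infimum over $X$ equals the infimum over $\{y_i\}$. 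Finally, the map $K\mapsto d(y,K)$ is $1$-Lipschitz on $(\mathcal K^\prime(X),d_H)$, hence Borel, so each $\omega\mapsto h(\omega,y_i)$ is measurable. Thus
$$
\{\omega:k_1(\omega)\cap k_2(\omega)\subset G\}=\bigcup_{m\ge 1}\bigcap_{i}\{\omega:h(\omega,y_i)\ge 1/m\}\in\mathcal A,
$$
which closes the argument.
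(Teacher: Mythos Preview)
Your argument is correct and follows exactly the route the paper indicates: the paper states only that the lemma follows from the characterizations in Remark~\ref{characterizations} and gives no further details, so your proof is a legitimate filling-in of that sketch. The union via the hitting characterization is the obvious argument; for the intersection you supply a genuine proof where the paper supplies none.

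A few small points worth tightening. First, your phrasing of the finite-intersection-property step is slightly off: the sets $k_j(\omega)\cap G^c$ are not subsets of $k_1(\omega)\cap G^c$; the clean formulation is that the decreasing sequence $F_n:=\bigl(\bigcap_{j\le n}k_j(\omega)\bigr)\cap G^c$ of compact sets has empty intersection, hence some $F_n=\emptyset$. Second, the sentence ``by induction on $n$, the proof reduces to the binary case'' hides a small issue: to feed $\bigcap_{j\le n-1}k_j$ back into the binary step you need it to be a random compact set, which in turn uses the inductive hypothesis together with the containment characterization (and a split on the measurable event where that finite intersection is empty). Alternatively---and this is perhaps what you intended---your $h$-function argument works verbatim with $\max\{d(x,k_1(\omega)),\ldots,d(x,k_n(\omega)),d(x,G^c)\}$ for any finite $n$, so no induction is needed at all. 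Third, the case $G=X$ (where $G^c=\emptyset$ and $d(x,G^c)$ is undefined) is trivial and should be set aside at the outset. Finally, you correctly flag that separability of $X$ is being used; this is implicit in the paper's reliance on Remark~\ref{characterizations} and in its applications ($X=\R^n$ or $\C^n$), so the assumption is harmless in context.
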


\begin{lemma}\label{countable cup}
Let $\{k_i\}_{i=0}^\infty$ be random compact sets. Suppose we know that for each $\omega$, $k(\omega):=\cup_{i=0}^\infty k_i(\omega)$ is a compact set. Then $k$ is a random compact set.
\end{lemma}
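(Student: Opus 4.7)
The plan is to verify measurability of $k:\Omega\to\mathcal K^\prime(X)$ by testing it against a generating family for the Borel $\sigma$-algebra of $\mathcal K^\prime(X)$, rather than trying to manipulate $k$ directly as a limit. The natural choice is the family appearing in the second description from Remark~\ref{characterizations}: the sets
\[
    \mathcal G_G := \{K\in\mathcal K^\prime(X): K\cap G\neq\emptyset\}, \qquad G\subset X \text{ open},
\]
generate $\mathcal B(\mathcal K^\prime(X))$. Hence it suffices to show that $k^{-1}(\mathcal G_G)\in\mathcal A$ for every open $G\subset X$.

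The key observation is the set-theoretic identity
\[
    k(\omega)\cap G = \Bigl(\bigcup_{i=0}^\infty k_i(\omega)\Bigr)\cap G = \bigcup_{i=0}^\infty\bigl(k_i(\omega)\cap G\bigr),
\]
which gives immediately
\[
    k(\omega)\cap G \neq \emptyset \iff \exists\, i\ge 0 \text{ such that } k_i(\omega)\cap G \neq \emptyset.
\]
Consequently,
\[
    k^{-1}(\mathcal G_G) = \bigcup_{i=0}^\infty k_i^{-1}(\mathcal G_G).
\]
Since each $k_i$ is a random compact set, each preimage $k_i^{-1}(\mathcal G_G)$ lies in $\mathcal A$, and a countable union of measurable sets is measurable. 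This establishes $k^{-1}(\mathcal G_G)\in\mathcal A$ for every open $G$, and so $k$ is measurable by the generating characterization.

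The compactness hypothesis on $k(\omega)$ plays only the role of ensuring that $k$ is actually a map into $\mathcal K^\prime(X)$ so that the question of measurability even makes sense; it is not used again in the argument. I do not anticipate any serious obstacle: the entire content is the trivial commutation of $\bigcup_i$ with ``meets $G$'', combined with countable additivity of $\mathcal A$. The only thing one must be careful about is to use the ``meets an open set'' generators rather than the ``contained in an open set'' generators, since the analogous identity $\bigcup_i k_i(\omega)\subset G \iff \forall i,\, k_i(\omega)\subset G$ would also work but is perhaps less immediate to write down.
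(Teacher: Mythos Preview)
Your proof is correct and follows exactly the route the paper indicates: the paper does not give a detailed argument for this lemma but simply states that it follows from the characterization of Remark~\ref{characterizations}, and your use of the ``meets an open set'' generators $\mathcal G_G$ together with the identity $k^{-1}(\mathcal G_G)=\bigcup_i k_i^{-1}(\mathcal G_G)$ is precisely the intended verification.
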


By Corollary \ref{maps F to f}, if $X$ is a compact metric space, we can say that a function $f: \Omega \times X \rightarrow \C^n$ is a random element of $C(X, \C^n),$  if $f(\omega, \cdot) \in C(X, \C^n)$ for all $\omega \in \Omega$ and $f(\cdot, z)$ is measurable for all $z \in X$.

\begin{lemma}\label{singleton}
Let $X$ be a compact metric space and  $f:\Omega\times X\rightarrow \C^n$ be  a random element of $C(X,\C^n).$ 
Then, for each $x\in X,$ the function 
$$
	f(\cdot,x):\Omega\rightarrow\C^n, \quad \omega\mapsto f(\omega,x)
$$ 
is a random complex vector and the (singleton-valued) function
$$
	\mathcal K^f(\cdot,\{x\}):\Omega\rightarrow\mathcal K^\prime(\C^n), \quad 
		\omega\mapsto \{f(\omega,x)\}
$$
is a random compact set. 
\end{lemma}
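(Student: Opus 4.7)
The plan is to show that both conclusions follow almost immediately from definitions already laid out, so the proposal is brief. The first statement---that $f(\cdot,x)\colon\Omega\to\C^n$ is a random complex vector---is literally part of the definition of ``random element of $C(X,\C^n)$'' that was stated in the paragraph preceding the lemma: for each $z\in X$, $f(\cdot,z)$ is measurable, so specializing to $z=x$ yields the first claim with nothing to prove.

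For the second statement, my plan is to realize $\mathcal K^f(\cdot,\{x\})$ as a composition $\iota\circ f(\cdot,x)$, where $\iota\colon\C^n\to\mathcal K^\prime(\C^n)$ is the singleton map $z\mapsto\{z\}$. First I would note that $\iota$ is continuous (and in fact an isometric embedding), since for any $z,w\in\C^n$ one computes $d_H(\{z\},\{w\})=|z-w|$ directly from the definition of Hausdorff distance. Hence $\iota$ is Borel measurable. Combined with the measurability of $f(\cdot,x)$ from the first part, the composition $\omega\mapsto\iota(f(\omega,x))=\{f(\omega,x)\}$ is measurable with respect to the Borel $\sigma$-algebra on $\mathcal K^\prime(\C^n)$, which is exactly the statement that $\mathcal K^f(\cdot,\{x\})$ is a random compact set.

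If one prefers to avoid invoking continuity of $\iota$, an equally short alternative is to use the generating family from Remark \ref{characterizations}: for any open $G\subset\C^n$,
$$
\{\omega\in\Omega:\{f(\omega,x)\}\subset G\}=\{\omega\in\Omega:f(\omega,x)\in G\}=f(\cdot,x)^{-1}(G)\in\mathcal A,
$$
since $f(\cdot,x)$ is Borel measurable. As the sets $\{K\in\mathcal K^\prime(\C^n):K\subset G\}$ generate the Borel $\sigma$-algebra on $\mathcal K^\prime(\C^n)$, measurability of $\mathcal K^f(\cdot,\{x\})$ follows.

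There is no serious obstacle here; the content of the lemma is essentially that the map ``point-to-singleton'' preserves measurability, which in either formulation reduces to a one-line calculation. The only mild point to verify is the isometry property $d_H(\{z\},\{w\})=|z-w|$, or equivalently the description of the Borel $\sigma$-algebra on $\mathcal K^\prime(\C^n)$ recalled in Remark \ref{characterizations}, both of which are elementary.
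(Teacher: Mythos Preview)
Your proof is correct, but it takes a different and more direct route than the paper. The paper factors the map through $C(X,\C^n)$: it defines $\widetilde x\colon C(X,\C^n)\to\mathcal K^\prime(\C^n)$ by $g\mapsto\{g(x)\}$, shows $\widetilde x$ is measurable by computing preimages of closed Hausdorff balls via a separability argument, and then composes with the measurable map $\omega\mapsto f(\omega,\cdot)\in C(X,\C^n)$ furnished by Corollary~\ref{maps F to f}. You instead factor through $\C^n$, using the singleton map $\iota\colon\C^n\to\mathcal K^\prime(\C^n)$ and the already-established measurability of $f(\cdot,x)$ from the first part. Your argument is shorter and avoids the separability computation entirely; indeed, since $\widetilde x=\iota\circ\mathrm{ev}_x$ with both factors continuous, your observation actually yields that $\widetilde x$ is \emph{continuous}, which is stronger than what the paper proves. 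The paper's detour has one minor payoff: the measurability of $\widetilde x$ is invoked again verbatim in the proof of Theorem~\ref{F sub f}, so establishing it inside this lemma makes that later citation self-contained---but your route would serve equally well there.
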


\begin{proof}
The first assertion follows from the definition of a random  function. 

For brevity, we write $F_x=\mathcal K^f(\cdot,\{x\}).$ We need to show that $F_x$ is measurable. We use separability arguments. For $W\in \mathcal K^\prime(\C^n)$ and  $r>0,$ consider the closed ball $\overline{B_r}(W)=\{V\in \mathcal K^\prime(\C^n) : d_H(W,V)\leq r\}$  in $\mathcal K^\prime(\C^n)$. 
It is easy to see that every closed ball is closed.

Denote by $\widetilde x:C(X,\C^n)\rightarrow  \mathcal K^\prime(\C^n)$ 
the mapping $g\mapsto \{g(x)\},$ for $g\in C(X,\C^n).$ We then see directly that
\begin{align*}
    \widetilde x^{-1}(\overline{B_r}(W))&=\{g \in C(X,\C^n) : d_H(g(\{x\}),W)\leq r \}\\
    &=\bigcap_{y\in W}\{g \in C(K,\C^n) : d(g(x),y)\leq r \}.
\end{align*}
As $\C^n$ is separable and $W\subset \C^n$, $W$ is also separable. There thus exists a countable dense subset $W^*$ of $W$ and, since $d$ is a continuous function, we can restrict the intersection to $W^*$. We also know that the $\{g\in C(X,\C^n) : d(g(x),y)\leq r\}$ are measurable sets, as they are closed. As such, $ \widetilde x^{-1}(\overline{B_r}(W))$ is measurable, being the countable intersection of measurable sets. We know from Lemma \ref{ksep} that $\mathcal K(\C^n)$ is separable. Thus, by a similar argument to that in the proof of Proposition \ref{measurable element}, every open set of $\mathcal K^\prime(\C^n)$ can be expressed as a countable union of closed balls, and we can then generalize to all Borel subsets. Hence,  the function $\widetilde x$ is a random function.  
By hypothesis, $f$ is a random element of $C(X,\C^n),$ so the mapping $\omega\mapsto f(\omega,\cdot)$ is measurable. 
It follows that $F_x$ is measurable, since it is  the composition 
$\widetilde x(f(\omega,\cdot))$ of measurable functions: 
$$
	  F_x(\omega) = 
		\mathcal K^f(\omega,\{x\} ) =  \{f(\omega,x)\} =  \widetilde x(f(\omega,\cdot)).
$$

\end{proof}

Let  $X$ be a compact metric space and  $g:\Omega\times X\rightarrow\C^n$  a random continuous function  on $X.$ For $\omega\in\Omega,$ we denote $X^g(\omega)=g^{\mathcal K^\prime}(\omega,X).$

\begin{theorem}\label{F sub f}
Suppose $X$ is a compact metric space. If $g:\Omega\times X\rightarrow\C^n$ is a random continuous function  on $X$, then $X^g(\omega)$ is a random compact set in $\C^n.$ 
\end{theorem}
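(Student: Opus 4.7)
The plan is to factor the map $\omega\mapsto X^g(\omega)$ through the Banach space $C(X,\C^n)$ and exploit the continuity of the image operation. Since $g$ is, by hypothesis, a random continuous function on $X$, Corollary~\ref{maps F to f} tells us at once that the associated mapping $G\colon\Omega\to C(X,\C^n)$ defined by $G(\omega):=g(\omega,\cdot)$ is measurable, where $C(X,\C^n)$ carries the Borel $\sigma$-algebra induced by the uniform norm.

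The next step is to introduce the image map $\Phi\colon C(X,\C^n)\to\mathcal K^\prime(\C^n)$ defined by $\Phi(h):=h(X)$. This is well defined, since the continuous image of a non-empty compact set is a non-empty compact set. The key technical point is that $\Phi$ is $1$-Lipschitz with respect to the uniform norm on its domain and the Hausdorff distance $d_H$ on its codomain: for any $h_1,h_2\in C(X,\C^n)$ and any $y=h_1(x)\in h_1(X)$ one has $d(y,h_2(X))\le d(h_1(x),h_2(x))\le\|h_1-h_2\|_\infty$, and the symmetric bound holds with the roles of $h_1$ and $h_2$ interchanged. Taking suprema yields $d_H(h_1(X),h_2(X))\le\|h_1-h_2\|_\infty$, so $\Phi$ is continuous and, in particular, Borel measurable.

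Finally, since $X^g(\omega)=g(\omega,X)=G(\omega)(X)=\Phi(G(\omega))$ for every $\omega\in\Omega$, the map $X^g=\Phi\circ G$ is the composition of a measurable function with a continuous one, and is therefore measurable as a function from $\Omega$ to $\mathcal K^\prime(\C^n)$. This is exactly what it means for $X^g$ to be a random compact set. The argument presents no serious obstacle; the only place requiring any care is the Hausdorff-distance estimate that establishes continuity of $\Phi$. One could alternatively argue directly from the generating family $\{K\in\mathcal K^\prime(\C^n):K\subset G\}$ for the Borel $\sigma$-algebra recorded in Remark~\ref{characterizations}, but the factorization through $C(X,\C^n)$ seems cleaner and reuses Corollary~\ref{maps F to f} verbatim.
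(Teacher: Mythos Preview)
Your proof is correct, and it takes a genuinely different route from the paper's.

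The paper approximates $X$ from within by finite sets $X_j=\{x_1,\dots,x_j\}$ drawn from a countable dense subset. It first invokes Lemma~\ref{singleton} to show that each singleton image $\omega\mapsto\{g(\omega,x_i)\}$ is a random compact set, then applies Lemma~\ref{cup} finitely many times to conclude that $F_j(\omega):=g(\omega,X_j)$ is a random compact set, and finally uses Lemma~\ref{continuous extension} together with $X_j\to X$ in $\mathcal K^\prime(X)$ to pass to the pointwise limit $X^g(\omega)$. Your argument bypasses all of this scaffolding: you factor $X^g$ as $\Phi\circ G$ with $G\colon\Omega\to C(X,\C^n)$ measurable by Corollary~\ref{maps F to f} and $\Phi\colon h\mapsto h(X)$ shown directly to be $1$-Lipschitz for the Hausdorff distance. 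This is shorter and more conceptual; in particular it renders Lemmas~\ref{singleton} and~\ref{cup} unnecessary for the present theorem. The paper's approach does have the incidental benefit of exercising those lemmas, which are used again later, but as a standalone proof of Theorem~\ref{F sub f} your factorization is the cleaner one.
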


\begin{proof}
Let $g:\Omega\times X\rightarrow \C^n$ be a random continuous function. 
We need only show that $X^g$ is  measurable. Let $\{x_1,x_2,\ldots\},$ be a countable (ordered) dense subset of $X;$  let $X_j$ be the set consisting of the first $j$ elements of this set; and let $g_j(\omega,x)$ be the restriction of $g(\omega,x)$ to $\Omega\times X_j.$
Clearly, for each $j,$ the function $g_j$ is a random continuous function on $X_j.$  For each $j=1,2,\cdots,$ we define  the set-function
$$
	F_j:\Omega\rightarrow \mathcal K^\prime(\C^n), \quad  \omega\mapsto g_j(\omega, X_j). 
$$
We shall show that $F_j$ is a random compact set.

By Lemma \ref{singleton}, the function $F_1$ is itself measurable (and analogously for all 1-element sets).  $F_1:\Omega\rightarrow\mathcal K^\prime(\C^n)$ is  random-continuous  compact set. 

Denote by $C(X,\C^n)$ the set of continuous functions from $X$ to $\C^n$ and,
for $x\in X,$  denote by $\widetilde x:C(X,\C^n)\rightarrow  \mathcal K^\prime(\C^n)$ 
the mapping $h\mapsto \{h(x)\},$ for $h\in C(X,\C^n).$
Again, by Lemma \ref{singleton}, each $\widetilde x_j$ is measurable and then, applying Lemma \ref{cup} $j$~times, we obtain that $F_j$ is a random compact set. 
Since $g(\omega,\cdot)$ is continuous for each $\omega$, its extension $g^{\mathcal K^\prime}(\omega,\cdot)$ to compact sets is continuous. Since $X_j\rightarrow X,$ one can then enter the limit into the function : 
$$
	\lim_{j\rightarrow \infty}F_j(\omega)=
	\lim_{j\rightarrow \infty}g^{\mathcal K^\prime}(\omega, X_j)=
	g^{\mathcal K^\prime}(\omega,\lim_{j\rightarrow \infty}X_j) = 
	g^{\mathcal K^\prime}(\omega,X)=X^g(\omega).
$$ 
We have shown that the sequence  $F_j$ of  measurable functions converges  pointwise to the function $X^g.$  Since these functions take their values in a metric space, it follows easily   
that $X^g$ is  measurable, which concludes the proof. 
\end{proof}

We define a compact transformation as a function $T : \mathcal K'(X)\rightarrow \mathcal K'(X)$. We say that a compact transformation is randomness-preserving if 
$K$ being a random compact set implies that $T(K)$ is a random compact set. It is important that this property not depend on a specific structure of $\Omega$, it must work for every measurable space.

\begin{lemma}\label{measurableness-preserving}
    A compact transformation is randomness-preserving if and only if it is a random function.
\end{lemma}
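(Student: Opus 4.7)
The plan is to unwind the two definitions and exhibit a canonical test case for the harder direction. Here "$T$ is a random function" should be read as: the map $T\colon\mathcal{K}'(X)\to\mathcal{K}'(X)$ is Borel measurable with respect to the Borel $\sigma$-algebras coming from the Hausdorff metric on source and target. The statement then decomposes into a trivial implication and a universality statement that admits a short reduction.

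For the direction that measurability implies randomness-preservation, I would argue purely by composition. Let $(\Omega,\mathcal{A})$ be any measurable space and let $K\colon\Omega\to\mathcal{K}'(X)$ be a random compact set, i.e.\ a measurable mapping in the sense of Remark~\ref{characterizations}. If $T$ is Borel measurable, then for every Borel set $B\subset\mathcal{K}'(X)$ the preimage $T^{-1}(B)$ is Borel in $\mathcal{K}'(X)$, and hence
\[
(T\circ K)^{-1}(B)=K^{-1}\bigl(T^{-1}(B)\bigr)\in\mathcal{A}.
\]
So $T\circ K=T(K(\cdot))$ is measurable, which says exactly that $T(K)$ is a random compact set, as required.

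For the converse, the key observation is that in the definition of randomness-preserving we are allowed to choose $\Omega$. I would take the canonical choice $\Omega:=\mathcal{K}'(X)$, equipped with $\mathcal{A}:=\mathcal{B}(\mathcal{K}'(X))$, and let $K_{\mathrm{id}}\colon\Omega\to\mathcal{K}'(X)$ be the identity map. Since the identity is tautologically Borel measurable, $K_{\mathrm{id}}$ is a random compact set. By the randomness-preserving hypothesis, $T\circ K_{\mathrm{id}}=T$ is a measurable mapping $\mathcal{K}'(X)\to\mathcal{K}'(X)$, which is precisely the statement that $T$ is a random function.

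I do not foresee a substantive obstacle: both directions are essentially just a rearrangement of quantifiers in the definition of measurability. The only place one could slip is in the interpretation of "random function" for a parameter-free map $T$; I would include a one-line explanation that, since $T$ does not depend on $\omega$, "random function" reduces to plain Borel measurability of $T\colon\mathcal{K}'(X)\to\mathcal{K}'(X)$, which is consistent with the definition used at the start of Section~1 and the convention of Remark~\ref{characterizations}.
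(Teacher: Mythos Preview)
Your proposal is correct and takes essentially the same approach as the paper: the easy direction is by composition of measurable maps, and the harder direction is by specializing to $\Omega=\mathcal{K}'(X)$ with $K$ the identity. The only cosmetic difference is the order in which you treat the two implications.
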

\begin{proof}
    Suppose a compact transformation $T$ is randomness-preserving. This means that for any choice of measurable events set $(\Omega,\mathcal A)$, the measurability of the mapping $\omega \mapsto K(\omega)$ implies the measurability of the mapping $\omega\mapsto T(K(\omega))$.
    
We may thus choose $\Omega=\mathcal K'(X)$ and $\mathcal A=\mathcal B(\mathcal K'(X))$. We now look at the 
identity mapping $I:\Omega\rightarrow \mathcal K'(X)$  mapping $K$ to $K.$ This mapping defines a random compact set, since the identity function is certainly measurable. 
Thus, as $T$ is randomness-preserving, the mapping $K\mapsto T(I(K))=T(K)$ is random.

    The converse follows directly by composition of measurable functions.
\end{proof}

For $K\in \mathcal K^\prime(\C^n),$ we denote by $\widehat K$ the polynomially convex hull of $K.$ It is defined as $$\widehat K = \{z \in \C^n : \abs{p(z)} \leq \max_{x\in K} \abs{p(x)} \text{ } \forall p \in P(\C^n)\}$$where $P(\C^n)$ is the set of complex polynomials from $\C^n$ to $\C$. We shall show that the polynomially convex hull of a random compact set is also a random compact set, but first we present some lemmas.

\begin{lemma} \label{lemmaPolyHull}
Let $K$ be a compact subset of $\C^n$ and let $P^\Q(\C^n)$ denote the polynomials in $\C^n$ whose coefficients have rational real and imaginary part. Then,
\begin{equation} \label{hullQ}
    \widehat K = \{z \in \C^n : \abs{p(z)} \leq \max_{x\in K} \abs{p(x)} \text{ } \forall p \in P^\Q(\C^n)\}.
\end{equation}
\end{lemma}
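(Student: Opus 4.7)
The plan is to prove both inclusions. Call $\widehat K_\Q$ the right-hand side of \eqref{hullQ}. Since $P^\Q(\C^n)\subset P(\C^n)$, the containment $\widehat K\subset \widehat K_\Q$ is immediate: every polynomial test that appears in the defining condition for $\widehat K_\Q$ is already among those appearing in the defining condition for $\widehat K$.

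For the reverse containment, I would fix $z\in \widehat K_\Q$ and an arbitrary $p\in P(\C^n)$ and show $|p(z)|\le \max_{x\in K}|p(x)|$. The idea is density: the Gaussian rationals $\Q+i\Q$ are dense in $\C$, so if $p(w)=\sum_{|\alpha|\le d} c_\alpha w^\alpha$, then for any $\varepsilon>0$ one can choose $c_\alpha^\varepsilon \in \Q+i\Q$ with $|c_\alpha-c_\alpha^\varepsilon|$ as small as desired. Setting $p_\varepsilon(w):=\sum_{|\alpha|\le d}c_\alpha^\varepsilon w^\alpha\in P^\Q(\C^n)$, the compactness of $K\cup\{z\}$ bounds $\sum|w^\alpha|$ uniformly on that set, so $p_\varepsilon\to p$ uniformly on $K\cup\{z\}$ as $\varepsilon\to 0$.

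Applying the hypothesis $z\in\widehat K_\Q$ to each $p_\varepsilon$ gives
$$
|p_\varepsilon(z)|\le \max_{x\in K}|p_\varepsilon(x)|.
$$
Letting $\varepsilon\to 0$, the left side converges to $|p(z)|$ and the right side converges to $\max_{x\in K}|p(x)|$ (uniform convergence on the compact set $K$ preserves the sup-norm). This yields $|p(z)|\le \max_{x\in K}|p(x)|$, so $z\in\widehat K$, completing the proof.

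There is no real obstacle here; the only thing to be mildly careful about is that the approximating polynomials $p_\varepsilon$ are genuinely in $P^\Q(\C^n)$ (so one must restrict the coefficients to $\Q+i\Q$, not just to $\Q$) and that the degree $d$ of $p$ is fixed so the finite sum controls the uniform approximation on $K\cup\{z\}$.
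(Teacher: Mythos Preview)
Your argument is correct and is exactly the natural density argument for this statement. The paper itself states this lemma without proof, so there is no alternative approach to compare against; your write-up would serve perfectly well as the omitted justification.
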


Let $W$ be a measurable space.
We define as a pseudo-random compact set  in $X$ a mapping 
$$f : W \rightarrow\mathcal K(X),$$ 
if pre-images of Borel subsets of $\mathcal K^\prime(X)$ are measurable. 
Here $W$ can be a metric space with the Borel sets, or $W=\Omega$ . 
We see that every random compact set in $\C^n$  is a pseudo-random compact set. If $f$ is a pseudo-random compact set and $f^{-1}(\{\emptyset\})$ is empty, then $f$ is a random compact set (here we see $f^{-1}(\{\emptyset\})$ as the pre-image of the empty compact set, and not as the the pre-image of an empty set of compact sets). 
We have directly that the composition of a measurable function followed by
a pseudo-random compact set  is a pseudo-random compact set.

We define as a pseudo-continuous compact-valued function a function $f: W\rightarrow\mathcal K(\C^n)$, where $W$ is a metric space, if, for all $w\in W$ such that $f(w)\neq \emptyset$ and $\varepsilon>0$, there exists $\delta>0$ such that, if $d(w,y)<\delta$ and $f(y)\neq \emptyset$, then $d_H(f(w),f(y))<\varepsilon$, and if moreover the set $f^{-1}(\emptyset)$ is a measurable set.

\begin{lemma}\label{pseudo union}
	If $\{K_i\}_{i=0}^\infty$ is a sequence of pseudo-random compact sets, then $k(\omega):=\bigcap_{i=0}^\infty K_i(\omega)$ is a pseudo-random compact set. If we know that $K(\omega):=\bigcup_{i=0}^\infty K_i(\omega)$ is a compact set for each $\omega$, then $K$ is a pseudo-random compact set.
\end{lemma}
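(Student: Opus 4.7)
The plan is to reduce both assertions to Remark~\ref{characterizations}, which says that $\mathcal B(\mathcal K'(X))$ is generated by the sets $\mathcal U_G:=\{L\in\mathcal K'(X):L\cap G\neq\emptyset\}$ with $G\subset X$ open. For any $f:W\to\mathcal K(X)$, the collection $\{B\subset\mathcal K'(X):f^{-1}(B)\text{ is measurable}\}$ is trivially a $\sigma$-algebra, so in each case it suffices to verify that preimages of these generators are measurable.

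The union part is immediate: $K(\omega)\cap G\neq\emptyset$ holds precisely when some $K_i(\omega)\cap G\neq\emptyset$, so $K^{-1}(\mathcal U_G)=\bigcup_i K_i^{-1}(\mathcal U_G)$ is a countable union of measurable sets.

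For the intersection, let $\mathcal B$ be a countable base of open balls in $X$ (using separability of $X$), and set $G_n:=\{x\in X:d(x,X\setminus G)>1/n\}$ (with the convention $d(\cdot,\emptyset)\equiv+\infty$), so that the $G_n$ are open, increasing, cover $G$, and satisfy $\overline{G_n}\subset G$. The key claim is
\[
k(\omega)\cap G\neq\emptyset \iff \exists\, n\ \forall\, m\ \exists\, B\in\mathcal B \text{ with } B\subset G_n,\ \mathrm{diam}(B)<1/m,\ \text{and}\ K_i(\omega)\cap B\neq\emptyset\ \forall i.
\]
The forward direction is easy: take $z\in k(\omega)\cap G$, choose $n$ with $z\in G_n$, and for each $m$ find $B\in\mathcal B$ with $z\in B\subset G_n$ of diameter less than $1/m$. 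For the reverse direction, given such $B_m$, pick $x_m\in K_0(\omega)\cap B_m$; a subsequence $x_{m_j}\to x$ by compactness of $K_0(\omega)$, and $x\in\overline{G_n}\subset G$. For each $i$, any choice $x_m^{(i)}\in K_i(\omega)\cap B_m$ satisfies $d(x_m^{(i)},x_m)<\mathrm{diam}(B_m)<1/m$, so $x_{m_j}^{(i)}\to x$ as well; closedness of $K_i(\omega)$ yields $x\in K_i(\omega)$, hence $x\in k(\omega)\cap G$. Expanding the quantifiers, $k^{-1}(\mathcal U_G)$ becomes a countable Boolean combination of sets of the form $\{\omega:K_i(\omega)\cap B\neq\emptyset\}$, each measurable by pseudo-randomness of $K_i$.

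The main obstacle is producing, in the reverse direction, a witness that actually lies in the open set $G$ rather than merely in $\overline G$. A naive ball approximation with balls contained in $G$ alone would allow the limit point $x$ to escape to $\partial G$; that is precisely why one must work with the strictly deeper inner approximations $G_n$ satisfying $\overline{G_n}\subset G$.
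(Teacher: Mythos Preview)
Your proof is correct and follows the same route the paper indicates: the paper's proof is only a one-line reference to the method of Lemmas~\ref{cup} and~\ref{countable cup}, which in turn are proved via the generating family $\{K:K\cap G\neq\emptyset\}$ of Remark~\ref{characterizations}, exactly as you do. Your argument simply supplies the details (including the inner approximation $G_n$ with $\overline{G_n}\subset G$ for the intersection case) that the paper leaves to the reader; note only that your claim that $\{B:f^{-1}(B)\text{ measurable}\}$ is ``trivially'' a $\sigma$-algebra tacitly uses measurability of $f^{-1}(\mathcal K'(X))$, which is, however, covered by your generator check with $G=X$.
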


\begin{proof}
	The proof follows from the fact that we supposed that the pre-images of Borel subsets are measurable. We may use a similar proof method to the one employed in Lemmas \ref{cup} and \ref{countable cup}.

\end{proof}

\begin{lemma}\label{pseudo cont}
Let $f : X\rightarrow\mathcal K(\C^n)$ be a pseudo-continuous compact-valued function. Then it is also a pseudo-random compact set.
\end{lemma}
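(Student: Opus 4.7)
The plan is to separate the domain into the part where $f$ vanishes (takes the empty value) and the complementary part where $f$ is genuinely $\mathcal K'(\C^n)$-valued, then exploit pseudo-continuity as ordinary Hausdorff-metric continuity on the latter piece.

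First, set $W_\emptyset := f^{-1}(\emptyset)$ and $W_{\ne} := X \setminus W_\emptyset$. By the second clause in the definition of a pseudo-continuous compact-valued function, $W_\emptyset$ is measurable (Borel in $X$), hence $W_{\ne}$ is Borel as well. Consider the restriction $g := f|_{W_{\ne}} \colon W_{\ne} \to \mathcal K'(\C^n)$. For any $w \in W_{\ne}$, $g(w) = f(w) \ne \emptyset$, so the first clause of the pseudo-continuity condition applies: for every $\varepsilon > 0$ there is $\delta > 0$ such that $d(w,y) < \delta$ and $f(y) \ne \emptyset$ together imply $d_H(f(w),f(y)) < \varepsilon$. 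Since every $y \in W_{\ne}$ automatically satisfies $f(y) \ne \emptyset$, this says exactly that $g$ is continuous on $W_{\ne}$ with respect to the Hausdorff metric on $\mathcal K'(\C^n)$.

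Next, since $W_{\ne}$ and $\mathcal K'(\C^n)$ are metric spaces and $g$ is continuous between them, $g$ is Borel measurable: for every Borel set $B \subset \mathcal K'(\C^n)$, the pre-image $g^{-1}(B)$ is Borel in $W_{\ne}$, and therefore Borel in $X$ (as $W_{\ne}$ is itself Borel). Finally, because $B \subset \mathcal K'(\C^n)$ contains only non-empty compacta, no point of $W_\emptyset$ can map into $B$, so $f^{-1}(B) = g^{-1}(B)$ is Borel in $X$. This shows that pre-images under $f$ of Borel subsets of $\mathcal K'(\C^n)$ are measurable, which is exactly the definition of a pseudo-random compact set.

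There is no real obstacle here; the entire content is unpacking definitions. The one point requiring care is keeping track of the distinction between $\mathcal K(\C^n)$ and $\mathcal K'(\C^n)$: the hypothesis that $f^{-1}(\emptyset)$ be measurable is precisely what reduces the problem to the standard fact that a continuous map between metric spaces is Borel measurable, applied to the non-empty-valued part of $f$.
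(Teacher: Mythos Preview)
Your proof is correct and follows essentially the same approach as the paper: both split $X$ into $f^{-1}(\emptyset)$ and its complement, observe that the restriction of $f$ to the complement is continuous into $(\mathcal K'(\C^n),d_H)$, and conclude Borel measurability from continuity together with measurability of $f^{-1}(\emptyset)$. The only cosmetic difference is that the paper verifies the open-set case first and then extends to all Borel sets, whereas you invoke directly that continuous maps between metric spaces are Borel measurable; the content is identical.
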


\begin{proof}
    By our definition of pseudo-continuous function, we have that the restriction $f|_{X\setminus f^{-1}(\emptyset)}$ is a continuous function over $X\setminus f^{-1}(\emptyset).$
Thus, if $O$ is an open set of $\mathcal K (\C^n),$ then $f^{-1}(O)$ is an open set relative to $X\setminus f^{-1}(\emptyset)$. 
From the definition of pseudo-continuity, 
$f^{-1}(\emptyset)$ is measurable in $X,$ so $X\setminus f^{-1}(\emptyset)$ is also measurable. We have that  $f^{-1}(O)$ is a measurable subset of the measurable set  $X\setminus f^{-1}(\emptyset).$ 
Thus, $f^{-1}(O)$ is measurable relative to $X.$
As the open sets generate the Borel subsets, all pre-images of measurable subsets of $\mathcal K(\C^n)$ are measurable.
\end{proof}

\begin{theorem}\label{P-hull measurable}
Let $K$ be a random compact subset of $\C^n.$  Then its polynomially convex hull $\widehat K$, defined pointwise as $\widehat K(\omega):=\widehat{K(\omega)}$, is a random compact set.
\end{theorem}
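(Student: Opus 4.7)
The plan is to express $\widehat K(\omega)$ as a countable intersection of sets that are each individually random compact sets, and then appeal to the countable intersection clause of Lemma~\ref{cup}.

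By Lemma~\ref{lemmaPolyHull}, only the countable family $P^{\Q}(\C^n)$ is needed, and
$$\widehat K(\omega) = \bigcap_{p \in P^{\Q}(\C^n)} \{z \in \C^n : |p(z)| \leq M_p(\omega)\}, \qquad M_p(\omega) := \|p\|_{K(\omega)}.$$
Each $M_p$ is measurable, because $K \mapsto \|p\|_K$ is continuous on $(\mathcal K^\prime(\C^n),d_H)$ and $K$ itself is measurable. The difficulty is that for non-constant $p$ the individual level set $\{|p| \leq M_p(\omega)\}$ need not be bounded. I would therefore truncate by a random compact ball. Let $B_R := \{z\in\C^n : \|z\|_\infty \le R\}$. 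Since the coordinate functionals $z_1,\ldots,z_n$ lie in $P^{\Q}(\C^n)$, the above intersection is automatically contained in $B_{R(\omega)}$, where $R(\omega) := \max_{i} M_{z_i}(\omega)$ is measurable. Consequently
$$\widehat K(\omega) = \bigcap_{p \in P^{\Q}(\C^n)} \tilde E_p(\omega), \qquad \tilde E_p(\omega) := \{z \in B_{R(\omega)} : |p(z)| \leq M_p(\omega)\},$$
and now every $\tilde E_p(\omega)$ is a non-empty compact set, since it contains $K(\omega)$.

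The key step is to verify that each $\tilde E_p : \Omega \to \mathcal K^\prime(\C^n)$ is measurable. By Remark~\ref{characterizations}, it suffices to check that $\{\omega : \tilde E_p(\omega) \subset G\} \in \mathcal A$ for every open $G \subset \C^n$. For a fixed such $G$, define
$$f(R) := \inf\{|p(z)| : z \in B_R \setminus G\}, \qquad \text{with the convention }\inf\emptyset := +\infty.$$
Since $B_R\setminus G$ is compact, the infimum is attained when it is finite; moreover $f$ is non-increasing in $R$, and hence Borel measurable. A direct check then gives
$$\tilde E_p(\omega) \subset G \iff f(R(\omega)) > M_p(\omega),$$
and the right-hand condition is measurable in $\omega$ because $f$ is Borel and $R$, $M_p$ are measurable. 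Hence $\tilde E_p$ is a random compact set.

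Finally, Lemma~\ref{cup} applied to the countable family $\{\tilde E_p\}_{p \in P^{\Q}(\C^n)}$ yields that $\widehat K$ is a random compact set. I expect the main obstacle to be arranging the truncation cleanly: without reducing to a random compact ball, the candidate sets $\{|p|\le M_p(\omega)\}$ are generally unbounded and fall outside the framework of Lemma~\ref{cup}, so one must exploit the automatic coordinate bounds on $\widehat K$ to confine the problem to a random compact ball, after which everything collapses to the measurability of the single scalar condition $f(R(\omega))>M_p(\omega)$.
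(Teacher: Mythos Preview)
Your proof is correct and shares with the paper's the opening move of invoking Lemma~\ref{lemmaPolyHull} to reduce to a countable intersection, followed by a truncation to force compactness of the individual intersectands. The execution, however, is genuinely different. The paper truncates by a \emph{fixed} sequence of balls, writing
\[
\widehat K(\omega)=\bigcup_{i\ge1}\bigcap_{p\in P^{\Q}(\C^n)}\bigl(N_p^{K(\omega)}\cap\overline{B}_i(0)\bigr),
\]
and then develops auxiliary machinery (pseudo-random compact sets, Lemmas~\ref{pseudo union} and~\ref{pseudo cont}) to show that each $\omega\mapsto N_p^{K(\omega)}\cap\overline{B}_i(0)$ is measurable as a composition of four maps $\omega\mapsto K(\omega)\mapsto|p|(K(\omega))\mapsto\max|p|(K(\omega))\mapsto W_p^{\cdot}\cap\overline{B}_i(0)$. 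You instead truncate by a single \emph{random} box $B_{R(\omega)}$, with $R(\omega)$ read off from the coordinate polynomials, and verify measurability of each $\tilde E_p$ directly against the generators of Remark~\ref{characterizations}, reducing everything to the scalar condition $f(R(\omega))>M_p(\omega)$ with $f$ monotone. Your argument is shorter and more self-contained, and stays entirely within the framework of Lemma~\ref{cup} without ever leaving $\mathcal K'(\C^n)$; the paper's extra apparatus, on the other hand, is reused essentially verbatim in the proof of Theorem~\ref{R-hull measurable} on the rationally convex hull, so its investment is amortised there.
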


\begin{proof} 
   Let $N_p^K := \{z : \abs{p(z)} \leq \max_{x \in K} \abs{p(x)}\}$. Then, by  Lemma \ref{lemmaPolyHull},
    \begin{equation*}
        \widehat K(\omega) = \bigcap_{p \in P^\Q(\C^n)} N_p^{K(\omega)}.
    \end{equation*}
    The function $|p(\cdot)|$ is  continuous, and thus
$$N_p^{K(\omega)}=|p(\cdot)|^{-1}(\overline B_{\max_{x \in K(\omega)} \abs{p(x)}}(0))$$
is a closed set, as the pre-image of a closed set. But $N_p^K$ is not necessarily bounded. To solve this, we may write :
      \begin{equation}\label{khat}
        \widehat K(\omega) = \bigcup_{i=1}^\infty \bigcap_{p \in P^\Q(\C^n)} 						\left(N_p^{K(\omega)}\cap \overline{B}_i(0)\right).
    \end{equation}
This is true since for every $\omega$, if $i$ is  great enough, the compact set $\widehat K(\omega)$ is contained in the ball of radius $i$. Each $N_p^{K(\omega)}\cap \overline{B}_i(0)$ is compact, since it is  the intersection of a closed set with a compact set.
    
    Choose a non-constant polynomial $p$ with rational coefficients and a natural number $i$. We shall show that the mapping $\omega\mapsto N_p^{K(\omega)}\cap \overline{B}_i(0)$ is a pseudo-random compact set. It will follow from Lemma \ref{pseudo union} that $\bigcap_{p \in P^\Q(\C^n)} (N_p^{K(\omega)}\cap \overline{B}_i(0))$ is a pseudo-random compact set, since the intersection is countable. Then, as we already know that $\widehat K(\omega)$ is compact and non-empty, we can apply Lemma \ref{pseudo union} to prove the theorem.
    
   We consider several auxiliary  mappings. We already know the mapping $\omega\mapsto K(\omega)$ is measurable, by hypothesis.
As $|p(\cdot)|$ is continuous, we know  that $K\mapsto |p|(K)$ is a continuous mapping from $\mathcal K^\prime (\C^n)$ to $\mathcal K^\prime(\R)$. The mapping $K\mapsto \max\{x:x\in K\}$ between $\mathcal K^\prime(\R)$ and $\R$ is also continuous. Let $\varepsilon>0$ and fix $K_0\in \mathcal K^\prime(\C^n)$. Suppose $d_H(K_0,K) < \varepsilon$. Then, for each $x \in K_0$ (respectively $x\in K$) there is a $y \in K$ (respectively $y\in K_0$) such that  $\abs{x-y} < \varepsilon$. It follows that
    \begin{equation*}
        |\max\{x:x\in K_0\}-\max\{x:x\in K\}|<\varepsilon.
    \end{equation*}

Denote $W_p^M:=\{z : |p(z)|\leq M\}$. We shall now show that the mapping 
$W: \R \rightarrow \mathcal{K}(\C^n)$ that maps $M$ to $W_p^M \cap \overline B_i(0)$ is pseudo-random. Denote, for $O$ open in $\C^n$, the sets 
\begin{equation*}
H_{O} := \{K \in \mathcal{K}^\prime(\C^n) : K \cap O\neq \emptyset \},
\end{equation*}
which generate the Borel subsets of $\mathcal K^\prime(\C^n)$. 
Since $W(M) \subset W(N)$ whenever $M \leq N$ (by continuity of $\abs{p}$), we have that
\begin{equation*}
\{M : W(M) \in H_{O}\} = \{M : W(M) \cap O\neq \emptyset\} \in \{[M_0, \infty), (M_0, \infty),\emptyset\},
\end{equation*}
for some $M_0 \in \R$. The mapping $W$ is pseudo-random as a mapping from $\R$ to $\mathcal K(\C^n).$

We can now see that the mapping $\omega \mapsto N_p^{K(\omega)}\cap \overline B_i(0)$ is in fact 
the result of composing the sequence of mappings
    \begin{equation*}
        \omega\mapsto K(\omega)\mapsto |p(K(\omega))|\mapsto \max\{x \in |p(K(\omega))|\}\mapsto W_p^{\max\{x \in |p(K(\omega))|\}}\cap \overline B_i(0)
    \end{equation*}
    and is therefore pseudo-random. Being always non-empty, it is measurable.
    
As noted earlier, this proves the theorem.
\end{proof}

\begin{corollary}\label{map P-hull measurable} 
The mapping 
$$
	\mathcal K^\prime(\C^n)\rightarrow\mathcal K^\prime(\C^n), \quad K\mapsto \widehat K
$$
is measurable.
\end{corollary}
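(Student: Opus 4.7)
The plan is to derive the corollary directly by combining Theorem \ref{P-hull measurable} with Lemma \ref{measurableness-preserving}, since together they say exactly that the polynomial hull, viewed as a compact transformation on $\mathcal K^\prime(\C^n)$, is both randomness-preserving and therefore measurable.

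First I would verify that $K\mapsto \widehat K$ really is a compact transformation in the sense of the earlier definition, i.e.\ a mapping $\mathcal K^\prime(\C^n)\to\mathcal K^\prime(\C^n)$. This is immediate: for any non-empty compact $K\subset\C^n$, the hull $\widehat K$ contains $K$ and is itself a non-empty compact subset of $\C^n$, so the codomain $\mathcal K^\prime(\C^n)$ is respected.

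Next I would invoke Theorem \ref{P-hull measurable}: for every measurable space $(\Omega,\mathcal A)$ and every random compact set $K:\Omega\to\mathcal K^\prime(\C^n)$, the mapping $\omega\mapsto \widehat{K(\omega)}$ is again a random compact set. By the definition given just before Lemma \ref{measurableness-preserving}, this is precisely the statement that the hull transformation is randomness-preserving.

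Finally, applying Lemma \ref{measurableness-preserving} in the direction ``randomness-preserving $\Rightarrow$ random function'' yields that the mapping $K\mapsto\widehat K$ is measurable as a function from $(\mathcal K^\prime(\C^n),\mathcal B(\mathcal K^\prime(\C^n)))$ to itself. Since there is no genuine obstacle beyond quoting these two results, the proof is essentially a one-line deduction; the only thing to be careful about is the matching of definitions, namely that ``random function'' in Lemma \ref{measurableness-preserving} is interpreted as Borel measurability on $\mathcal K^\prime(\C^n)$, which is exactly the content of the corollary.
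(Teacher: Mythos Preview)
Your proposal is correct and follows exactly the paper's own argument: the paper's proof is the single line ``This follows from Lemma \ref{measurableness-preserving} and Theorem \ref{P-hull measurable}.'' Your write-up merely unpacks this deduction with the appropriate care about definitions, so there is nothing to add.
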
 

\begin{proof} This follows from Lemma \ref{measurableness-preserving} and Theorem \ref{P-hull measurable}.
\end{proof}

For a random compact set  $K:\Omega\rightarrow\mathcal K^\prime(\C^n),$ we define the random polynomially convex hull as the function $M\widehat K:\Omega\rightarrow \mathcal K^\prime(\C^n),$ given by setting 
$$
	M\widehat K(\omega)=	\{z: |p(\omega,z)|\le 
		\max_{x\in K(\omega)}|p(\omega,x)|, \forall \, \mbox{random polynomials} \,\, p\}.		
$$
Because of the  central role of the polynomially convex hull in the Oka--Weil Theorem, the random polynomially convex hull should be investigated in relation to approximation by random polynomials. The following proposition is rather obvious.

\begin{proposition}
    Let $K$ be a random compact set. Then, $\widehat K=M\widehat K$.
\end{proposition}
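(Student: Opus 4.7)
The plan is to prove the two inclusions separately, each reducing to the observation that random polynomials and classical polynomials interchange at a fixed $\omega$.

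For the inclusion $\widehat K(\omega)\subseteq M\widehat K(\omega)$, I would fix $\omega\in\Omega$ and $z\in\widehat K(\omega)$, and take an arbitrary random polynomial $p$. At that fixed $\omega$, the function $p(\omega,\cdot)$ is by definition a classical complex polynomial on $\C^n$. Since $z\in\widehat K(\omega)$, the classical definition applied to this polynomial yields
\[
|p(\omega,z)|\le \max_{x\in K(\omega)}|p(\omega,x)|,
\]
which is exactly the condition defining $M\widehat K(\omega)$.

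For the reverse inclusion $M\widehat K(\omega)\subseteq \widehat K(\omega)$, I would fix $\omega\in\Omega$ and $z\in M\widehat K(\omega)$, and take an arbitrary classical polynomial $q\in P(\C^n)$. Define $p:\Omega\times\C^n\to\C$ by $p(\omega',z'):=q(z')$. Then $p(\omega',\cdot)=q$ is a polynomial for every $\omega'$, and $p(\cdot,z')$ is constant, hence measurable, for every $z'$; so $p$ is a random polynomial. Applying the membership $z\in M\widehat K(\omega)$ to this particular $p$ gives
\[
|q(z)|=|p(\omega,z)|\le \max_{x\in K(\omega)}|p(\omega,x)|=\max_{x\in K(\omega)}|q(x)|.
\]
Since $q$ was arbitrary, $z\in\widehat K(\omega)$.

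There is no real obstacle here: the only substantive point is the trivial but essential observation that constant-in-$\omega$ polynomials are random polynomials, which ensures the family of random polynomials contains enough test functions to detect the classical polynomial hull.
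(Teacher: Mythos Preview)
Your proof is correct and is precisely the obvious argument the paper has in mind: the paper states this proposition as ``rather obvious'' and gives no proof, and your two inclusions---freezing $\omega$ to turn a random polynomial into a classical one, and conversely viewing a classical polynomial as a constant-in-$\omega$ random polynomial---constitute exactly that obvious argument.
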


Let us denote by $ \mathcal R\hull\,K$ the rationally convex hull of a compact set $K.$ It is defined as$$
\mathcal R\hull\,K := \{z \in \C^n : \abs{r(z)} \leq \max_{x \in K}  \abs{r(x)} \text{ } \forall r \in \mathcal R_K(\C^n)\}$$
where $\mathcal R_K(\C^n)$ is the set of rational functions from $\C^n$ to $\C$ which are holomorphic over $K$.

\begin{lemma}
Let $K$ be a compact subset of $\C^n$ and let $\mathcal \mathcal R_K^\Q(\C^n)$ denote the rational functions in $\C^n$ without singularities on $K$ and whose coefficients have rational real and imaginary parts. Then,
\begin{equation} \label{rationalHullQ}
    \mathcal R\hull\,K = \{z \in \C^n : \abs{r(z)} \leq \max_{x \in K}  \abs{r(x)} \text{ } \forall r \in \mathcal R_K^\Q(\C^n)\}.
\end{equation}
\end{lemma}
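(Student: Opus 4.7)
The proof will follow the template of Lemma~\ref{lemmaPolyHull}: one inclusion is immediate, and the reverse is obtained by density-approximating an arbitrary rational function by one with Gaussian rational coefficients. Since $\mathcal R_K^\Q(\C^n)\subset\mathcal R_K(\C^n)$, the right-hand side of (\ref{rationalHullQ}) imposes fewer conditions on $z$, hence trivially contains $\mathcal R\hull\,K$. For the reverse containment, fix $z$ in the right-hand side of (\ref{rationalHullQ}) and an arbitrary $r=p/q\in\mathcal R_K(\C^n)$, where $p,q$ are polynomials and $q$ is non-vanishing on $K$; set $\delta:=\min_K|q|>0$.

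The first step is to verify that necessarily $q(z)\neq 0$. Suppose for contradiction that $q(z)=0$. By density of $\Q+i\Q$ in $\C$, approximate the coefficients of $q$ by Gaussian rationals to obtain polynomials $q_n$ with $q_n\to q$ uniformly on $K\cup\{z\}$; by a further tiny perturbation we may additionally arrange $q_n(z)\neq 0$ for every $n$, since the equation $q_n(z)=0$ cuts out only a proper affine subvariety of the finite-dimensional coefficient space. For $n$ large, $|q_n-q|<\delta/2$ on $K$, so $q_n$ is non-vanishing on $K$ and $1/q_n\in\mathcal R_K^\Q(\C^n)$. The hypothesis on $z$ then gives $|1/q_n(z)|\leq 1/\min_K|q_n|$. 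The right-hand side converges to $1/\delta<\infty$, but the left-hand side tends to $\infty$ because $q_n(z)\to q(z)=0$, a contradiction.

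With $q(z)\neq 0$ now secured, the remaining step is a routine approximation: choose polynomials $p_n,q_n$ with Gaussian rational coefficients converging uniformly to $p,q$ on $K\cup\{z\}$, arranging that $q_n(z)\neq 0$ and, for $n$ large, that $q_n$ is non-vanishing on $K$. Then $p_n/q_n\in\mathcal R_K^\Q(\C^n)$, so the assumption on $z$ yields $|p_n(z)/q_n(z)|\leq\max_K|p_n/q_n|$; passing to the limit via uniform convergence of $p_n/q_n$ to $p/q$ on $K\cup\{z\}$ gives $|r(z)|\leq\max_K|r|$, placing $z$ in $\mathcal R\hull\,K$. The only delicate point, and the step I would expect to require the most bookkeeping, is juggling the three simultaneous requirements on the $q_n$ — closeness to $q$, non-vanishing at $z$, and non-vanishing on $K$ — but the first and third are handled coefficient-wise together with compactness of $K$, while the second is generic in coefficient space, so no real difficulty arises.
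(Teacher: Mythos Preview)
The paper states this lemma without proof (as it does for the analogous Lemma~\ref{lemmaPolyHull}), so there is nothing to compare against directly. Your argument is correct and is the natural one: the inclusion $\mathcal R\hull\,K\subset\text{RHS}$ is immediate, and for the reverse you approximate numerator and denominator separately by Gaussian-rational polynomials and pass to the limit. The preliminary step---showing that any polynomial $q$ non-vanishing on $K$ must also satisfy $q(z)\neq 0$, by testing the hypothesis against $1/q_n$---is genuinely needed, since without it the limit $p_n/q_n\to p/q$ at $z$ could fail; your treatment of it is clean. One minor simplification: in the final step, once $q(z)\neq 0$ is known, the condition $q_n(z)\neq 0$ follows automatically for large $n$ from $q_n(z)\to q(z)$, so no separate genericity argument is required there.
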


The following lemma is a known fact that we simply recall.

\begin{lemma} \label{pole set}
	Let $r : \C^n\rightarrow \C$ be a  rational function. Then the set $S(r)$ of singularities of $r$ is a closed set.
\end{lemma}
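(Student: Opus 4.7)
The plan is short. First I would observe that, by the very definition of the singularity set of a holomorphic-on-an-open-set function, $\C^n \setminus S(r)$ consists precisely of those $z_0 \in \C^n$ at which $r$ admits a holomorphic extension to some open neighborhood $V$ of $z_0$. Such a set is trivially open: if $z_0$ admits such a $V$, then every $z \in V$ admits the same $V$ as a neighborhood on which $r$ is holomorphic. Hence $S(r)$, being the complement of an open set, is closed. This already suffices for the lemma.

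If one prefers to identify $S(r)$ explicitly, the alternative route is to write $r = p/q$ with $p, q$ polynomials and cancel any common non-constant factor to assume $p$ and $q$ are coprime — legitimate because $\C[z_1,\ldots,z_n]$ is a unique factorization domain. Then $r$ is plainly holomorphic on $\C^n \setminus Z(q)$, where $Z(q) := q^{-1}(\{0\})$, so $S(r) \subseteq Z(q)$; and $Z(q)$ is closed as the preimage of the closed singleton $\{0\}$ under the continuous polynomial $q$. This gives a second proof of closedness once the inclusion is in hand.

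The potential main obstacle lies in upgrading $S(r) \subseteq Z(q)$ to the equality $S(r) = Z(q)$, which in several variables requires a local argument (e.g.\ the Riemann extension theorem together with a Nullstellensatz-type observation that coprime polynomials cannot share a holomorphic factor in a neighborhood of a point of $Z(q)$). I would sidestep this complication entirely by relying on the tautological openness argument of the first paragraph, since the lemma only asks for closedness of $S(r)$ and not for its explicit description as the zero set of a denominator.
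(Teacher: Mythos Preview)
Your first-paragraph argument is correct and complete: the set of points admitting a local holomorphic extension is tautologically open, so its complement $S(r)$ is closed. The paper offers no proof at all --- it introduces this lemma with ``The following lemma is a known fact that we simply recall'' --- so your argument is strictly more than what the paper provides.

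One small slip worth flagging: in your second paragraph you write that the inclusion $S(r) \subseteq Z(q)$ together with $Z(q)$ closed ``gives a second proof of closedness once the inclusion is in hand.'' It does not --- a subset of a closed set need not be closed. You effectively acknowledge this in the third paragraph by noting that the equality $S(r) = Z(q)$ is what would be needed, and then you (rightly) fall back on the first argument. Since that first argument is already self-contained, the second and third paragraphs could be dropped without loss.
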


The proof of the next lemma is very simple and is thus left to the reader to verify.

\begin{lemma} \label{restriction measurable}
	Let $X$ be a metric space. Suppose  $X=A\cup B,$ where $A$ and $B$ are disjoint measurable subsets. Let $Y$ be a metric space and $f : X\rightarrow Y$ a function such that its restriction $f_A$ to the metric space $A$ is measurable, and its restriction $f_B$ to the metric space $B$ is measurable. Then $f$ is a measurable function.
\end{lemma}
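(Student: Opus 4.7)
The plan is to verify measurability directly from the definition: for an arbitrary Borel set $E\subset Y$, I want to show that $f^{-1}(E)$ is measurable in $X$. The decomposition of the domain gives the clean identity
\[
f^{-1}(E) = \bigl(f^{-1}(E)\cap A\bigr)\cup\bigl(f^{-1}(E)\cap B\bigr)=f_A^{-1}(E)\cup f_B^{-1}(E),
\]
so it is enough to show that each piece on the right is measurable in $X$.

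By hypothesis $f_A$ is measurable on $A$, so $f_A^{-1}(E)$ is Borel in $A$ endowed with the subspace topology. Since $A$ is a measurable subset of $X$, the Borel $\sigma$-algebra of the subspace $A$ is contained in the trace of the Borel $\sigma$-algebra of $X$ on $A$, and every such trace set is measurable in $X$ because $A$ is. Hence $f_A^{-1}(E)$ is measurable in $X$. The identical argument applied to $B$ shows that $f_B^{-1}(E)$ is measurable in $X$.

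Taking the (finite) union then gives that $f^{-1}(E)$ is measurable in $X$, which, $E$ being an arbitrary Borel set in $Y$, proves that $f$ is measurable. The only delicate point is the passage between Borel measurability in the subspace $A$ and Borel measurability in the ambient $X$, and this works precisely because $A$ (and $B$) are themselves measurable in $X$; nothing deeper is required.
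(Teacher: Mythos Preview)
Your proof is correct and is the standard direct verification. The paper itself omits the argument, remarking only that the lemma is very simple and left to the reader, so there is nothing to compare against; your decomposition $f^{-1}(E)=f_A^{-1}(E)\cup f_B^{-1}(E)$ together with the observation that Borel subsets of a Borel subspace are Borel in the ambient space is exactly the routine check one would supply.
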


\begin{theorem}\label{R-hull measurable}
Let $K$ be a random compact set  in $\C^n$. Then, its rationally convex hull $\mathcal R\hull\,K$, defined pointwise as 
$$
[\mathcal R\hull\,K](\omega) :=  \mathcal R\hull\,(K(\omega)),
$$
 is a random compact set.
\end{theorem}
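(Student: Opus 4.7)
The plan is to adapt the proof of Theorem \ref{P-hull measurable} to the rational setting; the new subtlety is that the family of rational functions appearing in the intersection depends on $\omega$, since $r\in\mathcal R^\Q_{K(\omega)}(\C^n)$ requires $r$ to be holomorphic on $K(\omega)$. By the preceding lemma,
$$
\mathcal R\hull\,K(\omega) = \bigcap_{r\in \mathcal R^\Q_{K(\omega)}(\C^n)} N_r^{K(\omega)}, \qquad N_r^K := \{z\in \C^n\setminus S(r): |r(z)|\le \textstyle\max_{x\in K}|r(x)|\},
$$
and each $N_r^K$ is closed in $\C^n$: since $|r|\to\infty$ at $S(r)$, the set $N_r^K$ cannot accumulate on the singular locus.

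First, enumerate $\mathcal R^\Q(\C^n)=\{r_1,r_2,\ldots\}$ and, for each $j$, set
$$
\Omega_j:=\{\omega:K(\omega)\cap S(r_j)=\emptyset\}=\{\omega: K(\omega)\subset \C^n\setminus S(r_j)\}.
$$
By Lemma \ref{pole set}, $\C^n\setminus S(r_j)$ is open; by Remark \ref{characterizations} the collection $\{K\in\mathcal K'(\C^n): K\subset \C^n\setminus S(r_j)\}$ is Borel in $\mathcal K'(\C^n)$; hence $\Omega_j$ is measurable. For each $j$ and each integer $i\ge 1$, define
$$
T_{j,i}(\omega):=\begin{cases} N_{r_j}^{K(\omega)}\cap \overline B_i(0), & \omega\in\Omega_j,\\ \overline B_i(0),& \omega\notin\Omega_j.\end{cases}
$$

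The key step is to show each $T_{j,i}$ is pseudo-random. I would apply Lemma \ref{restriction measurable} to the measurable partition $\Omega=\Omega_j\cup(\Omega\setminus\Omega_j)$: on $\Omega\setminus\Omega_j$ the function $T_{j,i}$ is constant, and on $\Omega_j$ one runs verbatim the chain of compositions used in the proof of Theorem \ref{P-hull measurable},
$$
\omega\mapsto K(\omega)\mapsto |r_j|(K(\omega))\mapsto \max\{t:t\in |r_j|(K(\omega))\}\mapsto N_{r_j}^{K(\omega)}\cap\overline B_i(0),
$$
using that $|r_j|$ is continuous on $\C^n\setminus S(r_j)$, that $K(\omega)\subset \C^n\setminus S(r_j)$ for $\omega\in\Omega_j$ (so Lemma \ref{continuous extension} applies on the open set $\C^n\setminus S(r_j)$), and the monotone argument used to verify pseudo-randomness of the sublevel-set mapping $M\mapsto \{z: |r_j(z)|\le M\}\cap \overline B_i(0)$ from the polynomial proof.

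Finally, because $\omega\in\Omega_j$ if and only if $r_j$ contributes a constraint at $\omega$, one has $\bigcap_j T_{j,i}(\omega)=\mathcal R\hull\,K(\omega)\cap\overline B_i(0)$, and, as $\mathcal R\hull\,K(\omega)$ is compact, $\bigcup_i\bigl[\mathcal R\hull\,K(\omega)\cap\overline B_i(0)\bigr]=\mathcal R\hull\,K(\omega)$. Two applications of Lemma \ref{pseudo union} then show that $\mathcal R\hull\,K$ is a pseudo-random compact set; since it contains $K(\omega)$ it is never empty, so it is in fact a random compact set. The main obstacle, dissolved by the $\Omega_j$-decomposition above, is precisely that the index set $\mathcal R^\Q_{K(\omega)}$ varies with $\omega$; with this handled, everything else runs parallel to the polynomial case.
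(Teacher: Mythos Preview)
Your argument follows the same blueprint as the paper's: handle the $\omega$-dependence of the index set by replacing the constraint $N_r^{K(\omega)}$ with a harmless ``full'' set whenever $r$ is singular on $K(\omega)$, then intersect over all of $\mathcal R^\Q(\C^n)$. The paper carries this out at the level of $\mathcal K'(\C^n)$, defining $\mu_r(K)=\widehat K$ when $K\cap S(r)\ne\emptyset$ and $\mu_r(K)=\widehat K\cap N_r^K$ otherwise, so that $\mathcal R\hull\,K=\bigcap_r\mu_r(K)$; you carry it out on $\Omega$ and use $\overline B_i(0)$ as the harmless set. Your variant has the mild advantage of not invoking Corollary~\ref{map P-hull measurable}; the paper's has the mild advantage that Lemma~\ref{restriction measurable}, which is stated for metric spaces, applies directly to the partition of $\mathcal K'(\C^n)$, whereas you invoke it on the abstract measurable space $\Omega$ (harmless, but formally outside the lemma's hypotheses).

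There is, however, one genuine slip. Your claim that $N_r^K$ is closed in $\C^n$ because ``$|r|\to\infty$ at $S(r)$'' fails for $n\ge 2$: a rational function $r=p/q$ in several variables can have indeterminacy points where $p=q=0$, and near such a point $|r|$ takes arbitrarily small values (e.g.\ $r(z_1,z_2)=z_1/z_2$ near the origin). Hence $T_{j,i}(\omega)=N_{r_j}^{K(\omega)}\cap\overline B_i(0)$ need not be compact, and the pseudo-random-compact machinery does not apply as written. The repair is easy: replace $N_r^K$ by its closure, equivalently by $\{z:|p(z)|\le \|r\|_K\,|q(z)|\}$, which is genuinely closed; the countable intersection is unchanged, since any indeterminacy point of some $p/q$ with $q\ne 0$ on $K$ is already excluded by the rational function $1/q$, for which $|1/q|$ does blow up there. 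The paper's proof, in appealing to ``essentially the same proof as for the polynomially convex hull,'' glosses over this same subtlety.
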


\begin{proof}
Let $r$ be a rational function and $K$ be a non-empty compact subset of $\C^n$. 
Set $A_r=\{K\in \mathcal K^\prime(\C^n) : K\cap S(r)\neq \emptyset\}$ and $B_r=\mathcal K^\prime(\C^n)\setminus A_r.$  Then, $\mathcal K^\prime(\C^n)$ is the disjoint union of $A_r$ and $B_r.$ 
We define a function $\mu_r : \mathcal K'(\C^n) \rightarrow \mathcal K'(\C^n)$ by
\begin{equation*}
    \mu_r(K) :=
    \begin{cases}
    \widehat K, & \text{if } K\in A_r, \\
    \widehat K \cap N_r^K, & \text{if } K\in B_r,
    \end{cases}
\end{equation*}
where $N_r^K = \{z \in \C^n : \abs{r(z)} \leq \max_{x \in K} \abs{r(x)}\}$. 
We note that, for $K\in B_r,$ the set $N_r^K$ is never empty, since it contains $K.$ 
Since all polynomials are also rational functions, $\mathcal R\hull\,K\subseteq \widehat K.$
Therefore, we can write
\begin{equation*}
     \mathcal R\hull\,K =  (\mathcal R\hull\,K)\cap\widehat K = 
	\bigcap_{r \in \mathcal R^\Q(\C^n)} \mu_r(K),
\end{equation*}
where $\mathcal R^\Q(\C^n)$ is the set of all rational functions in $\C^n$ whose coefficients have rational real and imaginary parts. The intersection is countable since $\mathcal R^\Q(\C^n)$ can be viewed as a subset of $P^\Q(\C^n) \times P^\Q(\C^n)$.  

It is thus sufficient to show that $K\mapsto \mu_r(K)$ is measurable, for then by Lemma~\ref{cup}, 
the mapping $\omega \mapsto  \mathcal R\hull\,K(\omega)$ will be measurable.

First of all, let us show that the subset $A_r\subset \mathcal K^\prime (\C^n)$  is measurable. 
We may assume that $A_r\not=\emptyset$ and hence $S(r)\not=\emptyset,$  
We construct a sequence of open subsets of $\C^n$ defined as 
$$O_n=\left\{z : d(z,S(r))<\frac{1}{n}\right\}.$$
Then, $O_{n+1}\subseteq O_n$ and  $\bigcap_{n=1}^\infty O_n = S(r)$. We have that$$A_r=\bigcap_{n=1}^\infty \{K : K\cap O_n\neq \emptyset\}.$$ By our second characterization of measurable sets (see Remark \ref{characterizations}), this is a countable intersection of measurable sets, and thus measurable. We have shown that $A_r$ and hence also $B_r=\mathcal K^\prime(\C_n)\setminus A_r$ are measurable.

The definition of the  function $\mu_r(K)$ depends on whether $K$ is in $A_r$ or $B_r.$ 
Since $\mathcal K^\prime(\C^n) = A_r\cup B_r$ is the union of disjoint measurable sets, in order to assure that $\mu_r$ is measurable, it suffices, by Lemma \ref{restriction measurable}, to show that the restrictions of $\mu_r$ to  $A_r$ and  to $B_r$ are measurable . 

By Corollary \ref{map P-hull measurable},
 we have that the mapping $K\mapsto \widehat K$ is measurable, and thus its restriction to $A_r,$ which is the same as the restriction of $\mu_r$ to $A_r,$ is also measurable. 

We claim that the mapping $K\mapsto \widehat K\cap N_r^K$, is measurable over $B_r$. We first notice that over the elements of $B_r$, our rational function $r$ is in fact continuous.
With essentially the same proof as for the polynomially convex hull (which did not use any special properties of polynomials other than that they are continuous functions), one can show that, over $B_r$, the mappings $K\mapsto N_r^K \cap \overline B_i(0)$ are pseudo-random compact. It then follows from Lemma \ref{pseudo union}, as the mapping $K\mapsto \widehat K$ is measurable,  that each mapping $K\mapsto N_r^K \cap \overline B_i(0)\cap \widehat K$ is pseudo-random compact. We then have that the mapping $$
	K\mapsto N_r^K \cap  \widehat K=\bigcup_{i=1}^\infty N_r^K \cap \overline B_i(0)\cap \widehat K
 $$
is pseudo-random compact by Lemma \ref{pseudo union}
and in fact measurable, as $N_r^K \cap  \widehat K$ 
is always non-empty.

By Lemma \ref{restriction measurable}, the mapping $\mu_r$ is measurable. 
\end{proof}

	One can use random compact sets  to construct interesting random functions.	
An important function for complex approximation (see \cite{Ni}) is the pluricomplex Green function $V_K$ for a non-empty compact set $K\subset\C^n$. It is defined as $V_K(z):=\log \Phi_K(z),$ for $z\in\C^n,$ where
$\Phi$ is the  Siciak extremal function, defined as
	\begin{equation*}
		\Phi_K(z):=\sup\Bigl\{  |p(z)|^{1/\deg p}: p\in P(\C^n),~\|p\|_K\leq 1,~\deg(p)\geq 1\Bigr\}.
	\end{equation*}

We have the following result (see also \cite{BL}).	
	\begin{theorem}\label{siciak}
		The Siciak extremal function and the pluricomplex Green function of a random compact set  $K$ are random functions. That is, the functions $(\omega,z)\mapsto\Phi_{K(\omega)}(z)$ and  $V_{K(\omega)}(z)=\log \Phi_{K(\omega)}(z),$ are random functions into $\R\cup\{\infty\}$.
	\end{theorem}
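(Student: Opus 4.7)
The plan is to express $\Phi_{K(\omega)}(z)$, for each fixed $z$, as a countable supremum of $\omega$-measurable functions, and then to deduce the corresponding statement for $V_{K(\omega)}(z)=\log\Phi_{K(\omega)}(z)$ by composition with the Borel-measurable logarithm on $[0,\infty]$. The starting observation is that the constraint $\|p\|_K\le 1$ in the definition of $\Phi_K$ can be absorbed into a normalisation, yielding the equivalent description
\begin{equation*}
\Phi_K(z) = \sup\left\{\left(\frac{|p(z)|}{\max(1,\|p\|_K)}\right)^{1/\deg p}: p\in P^{\mathbb Q}(\C^n),\ \deg p\ge 1\right\}.
\end{equation*}
Indeed, if $\|p\|_K\le 1$ the denominator equals $1$ and we recover $|p(z)|^{1/\deg p}$; if $\|p\|_K>1$, the polynomial $p/\|p\|_K$ is admissible in the original supremum and gives exactly the quotient above.

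Granting this reformulation, the measurability conclusion follows quickly. For each fixed $p\in P^{\mathbb Q}(\C^n)$, the map $\omega\mapsto\|p\|_{K(\omega)}$ is measurable: by Lemma~\ref{continuous extension} the induced map $K\mapsto|p|(K)$ from $\mathcal K'(\C^n)$ to $\mathcal K'(\R)$ is continuous, the maximum $A\mapsto\max A$ on $\mathcal K'(\R)$ is continuous, and $\omega\mapsto K(\omega)$ is measurable by hypothesis. Hence, for each fixed $p$ and $z$, the function
\begin{equation*}
\omega\mapsto\left(\frac{|p(z)|}{\max(1,\|p\|_{K(\omega)})}\right)^{1/\deg p}
\end{equation*}
is measurable, being the composition of a measurable real-valued function with a continuous function of a real variable. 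A countable supremum of measurable $[0,\infty]$-valued functions being measurable, $\omega\mapsto\Phi_{K(\omega)}(z)$ is measurable for each $z$, and composing with $\log$ gives the claim for $V_{K(\omega)}(z)$.

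The main obstacle I expect is the justification of the first step, namely that the supremum over all of $P(\C^n)$ can be replaced by the supremum over $P^{\mathbb Q}(\C^n)$. Given any $p\in P(\C^n)$ of degree $d$, one approximates by $p_k\in P^{\mathbb Q}(\C^n)$ with $\deg p_k=d$ (perturbing the coefficients of total degree $d$ by nearby non-zero rational complex numbers); continuity of $p\mapsto|p(z)|$ and of $p\mapsto\|p\|_K$ then gives convergence of the corresponding normalised quotients. The only delicate case is that of a non-trivial $p$ with $\|p\|_K=0$, for which $\Phi_K(z)=+\infty$ at any $z$ with $p(z)\ne 0$; to match this from within $P^{\mathbb Q}(\C^n)$, one picks $p_k\in P^{\mathbb Q}(\C^n)$ approximating $p$ with $\|p_k\|_K$ arbitrarily small and then scales by an integer $N_k$ satisfying $N_k\|p_k\|_K\le 1$, so that $N_kp_k\in P^{\mathbb Q}(\C^n)$ contributes $N_k^{1/d}|p_k(z)|^{1/d}\to+\infty$ to the rational supremum.
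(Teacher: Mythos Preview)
Your proof is correct and follows the same overall strategy as the paper: express $\Phi_{K(\omega)}(z)$ as a countable supremum over $P^\Q(\C^n)$ of functions that are measurable in $\omega$ for each fixed $z$. The difference lies in the choice of auxiliary function. The paper uses the truncated quantity
\[
g_p(\omega,z)=\begin{cases}|p(z)|^{1/\deg p},&\|p\|_{K(\omega)}\le 1,\\0,&\|p\|_{K(\omega)}>1,\end{cases}
\]
and proves its measurability in $\omega$ by a case analysis on whether $\|p\|_{K(\omega)}\le 1$; you instead absorb the constraint into the continuous normalisation $\bigl(|p(z)|/\max(1,\|p\|_{K(\omega)})\bigr)^{1/\deg p}$. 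Your choice has the advantage that this expression depends continuously on the coefficients of $p$ (at fixed degree), so the passage from $P(\C^n)$ to $P^\Q(\C^n)$ is immediate by density, with no need to check separately that a rational approximant $q$ of a boundary case $\|p\|_K=1$ still satisfies $\|q\|_K\le 1$. In fact this same continuity already handles the case $\Phi_K(z)=+\infty$ that you single out at the end: if the full supremum is infinite, approximate each contributing $p_n$ by a rational polynomial of the same degree whose normalised quotient lies within $1$ of that of $p_n$, and the rational supremum is then infinite as well. Your explicit scaling argument is thus correct but not strictly necessary.
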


	\begin{proof}
It is sufficient to show that the Siciak extremal function is a random function. 
		Let $p$ be a polynomial. We construct the function
		\begin{align*}
			g_p(\omega,z):=
			\begin{cases}
       			|p(z)|^{1/\deg p}, &\quad \|p\|_{K(\omega)}\leq 1,\\
       			0, &\quad \|p\|_{K(\omega)}>1,\\
     		\end{cases}
		\end{align*}
		which evaluates our polynomial if it respects the condition $\|p\|_{K(\omega)}\leq 1$, and returns $0$ if not.
		
		We first prove that this function is a random function. Fix $z\in \C^n$. Let $M$ be a measurable subset of $\R$. We then have
		\begin{align*}
			\{\omega : g_p(\omega,z)\in M\} & =(\{\omega : \|p\|_{K(\omega)}>1\}\cap\{\omega : 0\in M\})\\
&\quad\cup (\{\omega : \|p\|_{K(\omega)}\leq1\}\cap\{\omega : |p(z)|^{1/\deg p}\in M\}).
		\end{align*}
		We remark that the sets $\{\omega : 0\in M\}$ and $\{\omega : |p(z)|^{1/\deg p}\in M\}$ are always $\emptyset$ or $\Omega$, as the conditions do not depend on $\omega$. The set $\{\omega:\|p\|_{K(\omega)}\le 1\}$ is measurable, because the function $|p(K(\omega))|$ is measurable, since it is the composition of the measurable function $K:\Omega\rightarrow\mathcal K^\prime(\C^n)$ with the continuous function $\mathcal K^{|p|}:\mathcal K^\prime(\C^n)\rightarrow \mathcal K^\prime(\R)$. Thus, the set $\{\omega : g_p(\omega,z)\in M\}$ is measurable. This proves that $g_p(\cdot,z)$ is measurable, which means that $g_p$ is a  random function.
		
		Let us now return to the Siciak extremal function. We shall prove that it may be written as
		\begin{equation*}
			\Phi_K(z)=\sup\Bigl\{  |p(z)|^{1/\deg p} : p\in P^\Q(\C^n),~ \|p\|_K\leq 1, ~\deg(p)\geq 1 \Bigr\},
		\end{equation*}
		where $P^\Q(\C^n)$ is the set of all polynomials with rational coefficients.
		
		In order to do this, it is sufficient to show that for each $\varepsilon>0$, each $p\in P(\C^n)$ and each $z\in \C^n$, there is a polynomial $q\in P^\Q(\C^n)$ such that 
		$$
		\Bigl||p(z)|^{1/\deg p}-|q(z)|^{1/\deg q}\Bigr|<\varepsilon.
		$$
Suppose that $p$ has degree $k$. We will choose $q$ of degree $k$ also. Using the fact that if $x>y\geq 0$, then $x^{1/k}-y^{1/k}\le(x-y)^{1/k}$, and the reverse triangle identity $||x|-|y||\leq|x-y|$, we have that
		\begin{align*}
			||p(z)|^{1/\deg p}-|q(z)|^{1/\deg q}| &= ||p(z)|^{1/k}-|q(z)|^{1/k}|\\
			&\leq ||p(z)|-|q(z)||^{1/k}\\
			&\leq |p(z)-q(z)|^{1/k}.
		\end{align*}
		This reduces the problem to finding $q$ such that $|p(z)-q(z)|<\varepsilon^k$. But we know this is possible, as we can approximate $p$ by choosing $q$ with very close rational coefficients.
		
		But now, by the definition of $g_p$, we have that
		\begin{align*}
			\Phi_{K(\omega)}(z)=\sup_{p\in P^\Q(\C^n) : \deg(p)\geq 1} g_p(\omega,z).
		\end{align*}
		We were able to remove the condition over our polynomials that $\|p\|_{K(\omega)}\leq 1$ since $g_p(\omega,z)$ always returns $0$ if the condition is not fulfilled.
		
		We now prove that this function is measurable, that is,  that $\Phi_{K(\cdot)}(z)$ is measurable. Consider the subset $(-\infty,a]$  of $\R$. We have that
		\begin{align*}
			\{\omega : \Phi_{K(\omega)}(z)\in (-\infty,a]\}
			&=\{\omega : \Phi_{K(\omega)}(z)\leq a\}\\
			&=\Bigl\{\omega : \sup_{p\in P^\Q(\C^n) : \deg(p)\geq 1} g_p(\omega,z)\leq a\Bigr\}\\
			&=\{\omega :g_p(\omega,z)\leq a, \forall p\in P^\Q(\C^n), \deg(p)\geq 1\} \\
			&=\bigcap_{p\in P^\Q(\C^n) : \deg(p)\geq 1}\{\omega : g_p(\omega,z)\leq a\}.
		\end{align*}
		As $g_p$ is a random function, we know that each of the sets
 $\{\omega : g_p(\omega,z)\leq a\}$ is measurable. As $\{p\in P^\Q(\C^n) : \deg(p)\geq 1\}$ is a countable set, we have  a countable intersection of measurable sets, which is again a measurable set. Thus $\Phi_{K(\cdot)}(z)$ is a measurable function.
 \end{proof}


\section{Approximation: from parameter-wise uniform to jointly uniform} \label{GeneralizedmeasurableFunctions}

Let $K$ be a random compact set in $\C^n$. In the theory of relations, which are the same as multifunctions, also called set-valued functions, it is standard to define and denote the graph of $K$ as 
\begin{equation*}
\Gr K := \{(\omega, z) \in \Omega\times \C^n : z \in K(\omega)\}.
\end{equation*}
If we consider a compact set $K\subset\C^n$ as a random compact set with $K(\omega)=K,$ for all $\omega\in K,$ then $\Gr K=\Omega\times K.$

It is also standard to set 
\begin{equation*}
K^{-1}(z) = \{\omega : z \in K(\omega)\} = \bigcap_{n=1}^\infty \{\omega : K(\omega) \cap B_{\frac{1}{n}}(z) \neq \emptyset\}.
\end{equation*}
Note that this does \emph{not} correspond to the pre-image of the compact set $\{z\}$. 
By the second equality, we see that this set is measurable.
We define a generalized random function as a function $f : \Gr K \rightarrow \C$ 
(hence $f(\omega,\cdot)$ is defined over $K(\omega)$), 
such that, for fixed $z\in \C^n$ with $K^{-1}(z) \neq \emptyset$, 
the function $f(\cdot,z)$ from $K^{-1}(z)$ to $\C$ is measurable. 
In case a compact set $K$ is considered as a random compact set, then a generalized random function on $\Gr K=\Omega\times K$ turns out to be just a random function on $K.$

We say a random compact set  $K$ is uniformly separable if there exists a countable subset $E$ of $\C^n$ whose intersection with 
$K(\omega)$ is dense in $K(\omega)$ for every $\omega \in \Omega$. In a way it is a generalization of a random compact set  taking countably many values. In fact, we have the following obvious proposition.

\begin{proposition}
Let $K$ be a random compact set. If
\begin{equation}\label{unifsep}
\abs{\bigcup_{\omega \in \Omega} K(\omega)\setminus \overline{\intr K(\omega)}} \leq \aleph_0,
\end{equation}
then $K$ is uniformly separable. Moreover, 
if $K(\omega)$ is the closure of a bounded open subset of $\C^n$ for all $\omega \in \Omega$, then $K$ is uniformly separable.
\end{proposition}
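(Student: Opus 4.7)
The plan is to take $E$ to be the union of a countable dense subset $Q$ of $\C^n$ (for instance, points with rational real and imaginary coordinates) with the countable ``bad'' set
\[
S := \bigcup_{\omega\in\Omega}\bigl(K(\omega)\setminus\overline{\intr K(\omega)}\bigr),
\]
which is countable by hypothesis. Thus $E:=Q\cup S$ is countable, and it remains to verify that $E\cap K(\omega)$ is dense in $K(\omega)$ for every $\omega\in\Omega$.

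Fix $\omega\in\Omega$ and $z\in K(\omega)$. There are two cases. If $z\notin\overline{\intr K(\omega)}$, then $z\in S\subset E$, so $z$ itself lies in $E\cap K(\omega)$ and is trivially approximated. If instead $z\in\overline{\intr K(\omega)}$, then for every $\varepsilon>0$ there exists $z'\in\intr K(\omega)$ with $\abs{z-z'}<\varepsilon/2$. Since $\intr K(\omega)$ is open, we may choose $\delta<\varepsilon/2$ such that the open ball $B(z',\delta)$ is contained in $\intr K(\omega)\subset K(\omega)$; because $Q$ is dense in $\C^n$, this ball contains a point $q\in Q\cap K(\omega)\subset E\cap K(\omega)$, and then $\abs{q-z}<\delta+\varepsilon/2<\varepsilon$. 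Hence $z$ lies in the closure of $E\cap K(\omega)$. This proves the first assertion.

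For the second assertion, if $K(\omega)=\overline{U_\omega}$ for some bounded open set $U_\omega\subset\C^n$, then $U_\omega\subset\intr K(\omega)$, so $\overline{\intr K(\omega)}\supset\overline{U_\omega}=K(\omega)$, giving $K(\omega)\setminus\overline{\intr K(\omega)}=\emptyset$. Consequently the union in \eqref{unifsep} is empty and in particular has cardinality $\leq\aleph_0$, so the first part applies (with $S=\emptyset$ and $E=Q$). There is no genuine obstacle here; the argument is essentially bookkeeping, and the only thing to be careful about is that each individual $K(\omega)$ might contain uncountably many isolated or boundary points, which is why one needs the global hypothesis that the union of the non-interior parts across all $\omega$ is still countable.
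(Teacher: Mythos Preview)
Your proof is correct; the paper itself calls this proposition ``obvious'' and gives no proof, and your argument is exactly the natural way to fill in the details. The only minor observation is that your closing remark about ``each individual $K(\omega)$ might contain uncountably many isolated or boundary points'' is commentary rather than part of the proof, but it does no harm.
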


Note that the case that $K$ takes only countably many values is included in (\ref{unifsep}). We do not know whether (\ref{unifsep}) is a necessary condition when $K$ takes uncountably many values.

A generalized random  function $f(\omega,z)$ will be said to be a  generalized continuous function, if  for each $\omega$
the  function  $f(\omega, \cdot)$   is continuous on $K(\omega).$

\begin{theorem}\label{T:unifsep}
Let $K$ be a uniformly separable random compact set. Let ${f_j}$ be a sequence of  generalized continuous  functions  such that
\begin{equation*}
    f_j(\omega, \cdot) \longrightarrow f(\omega, \cdot) \text{~uniformly on } K(\omega), \quad 
		 \forall \omega \in \Omega .
\end{equation*}
Then there exists a sequence ${F_j}$ of generalized continuous  functions such that
\begin{equation*}
    F_j \longrightarrow f \text{~uniformly on } \Gr K,
\end{equation*}
and such that for all $\omega \in \Omega$ and $j \in \N$, there exists a $k \in \N$ with $F_j(\omega, \cdot) = f_k(\omega, \cdot)$.
\end{theorem}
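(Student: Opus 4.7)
The plan is to construct $F_j$ by a greedy parameter-wise choice: for each $\omega$ and each $j$, pick the smallest index $k=k(\omega,j)$ for which $\|f_k(\omega,\cdot)-f(\omega,\cdot)\|_{K(\omega)}<1/j$, and set $F_j(\omega,\cdot):=f_{k(\omega,j)}(\omega,\cdot)$. The parameter-wise uniform convergence guarantees such a $k$ exists, and the uniform bound $|F_j(\omega,z)-f(\omega,z)|<1/j$ on all of $\Gr K$ is then immediate. All the real work is verifying measurability of this greedy choice in $\omega$.

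First I would note that $f$ is itself a generalized random function: for each $z$ with $K^{-1}(z)\neq\emptyset$, the function $f(\cdot,z)$ is the pointwise limit on $K^{-1}(z)$ of the measurable functions $f_j(\cdot,z)$. Next, for each $k$ I would establish that
\[
h_k(\omega):=\|f_k(\omega,\cdot)-f(\omega,\cdot)\|_{K(\omega)}
\]
is measurable. This is the crux, and where uniform separability is essential. Let $E$ be the countable subset of $\C^n$ supplied by uniform separability. Since $f_k(\omega,\cdot)$ and $f(\omega,\cdot)$ are continuous on $K(\omega)$ and $E\cap K(\omega)$ is dense there,
\[
h_k(\omega)=\sup\bigl\{|f_k(\omega,z)-f(\omega,z)|:z\in E\cap K(\omega)\bigr\}.
\]
For each fixed $z\in E$, the map $\omega\mapsto|f_k(\omega,z)-f(\omega,z)|$ is measurable on the measurable set $K^{-1}(z)$; extending it by $0$ off $K^{-1}(z)$ gives a measurable function on $\Omega$. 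Thus $h_k$ is a countable supremum of measurable functions, hence measurable.

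Once $h_k$ is known to be measurable, the sets
\[
A_{j,k}:=\{\omega:h_k(\omega)<1/j\}\cap\bigcap_{m<k}\{\omega:h_m(\omega)\geq 1/j\}
\]
partition $\Omega$ measurably for each $j$. Define $F_j(\omega,z):=f_k(\omega,z)$ whenever $\omega\in A_{j,k}$ and $z\in K(\omega)$. Then $F_j(\omega,\cdot)=f_{k(\omega,j)}(\omega,\cdot)$ is continuous on $K(\omega)$; for each $z$ with $K^{-1}(z)\neq\emptyset$, $F_j(\cdot,z)$ agrees with $f_k(\cdot,z)$ on $A_{j,k}\cap K^{-1}(z)$ and so is measurable on $K^{-1}(z)$; and by construction $|F_j(\omega,z)-f(\omega,z)|\leq h_{k(\omega,j)}(\omega)<1/j$ for all $(\omega,z)\in\Gr K$, giving joint uniform convergence.

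The main obstacle is the measurability of $h_k$. The uniform separability hypothesis is exactly what is needed to convert the potentially problematic supremum over the uncountable, $\omega$-dependent set $K(\omega)$ into a countable supremum over $E\cap K(\omega)$; without it, neither the measurability of $h_k$ nor that of the selection $k(\omega,j)$ would be clear. Everything else in the argument is a standard countable measurability bookkeeping.
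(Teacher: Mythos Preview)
Your proposal is correct and follows essentially the same approach as the paper: use uniform separability to show that the sup-norm error $\omega\mapsto\|f_k(\omega,\cdot)-f(\omega,\cdot)\|_{K(\omega)}$ is measurable (as a countable supremum over $E$), then make a measurable index selection and patch the $f_k$ together. The only cosmetic difference is the selection rule: the paper picks the smallest $n$ such that \emph{all} $f_k$ with $k\ge n$ are $\varepsilon$-close, whereas you pick the smallest $k$ for which $f_k$ alone is $1/j$-close; both choices work equally well. One small point worth stating explicitly is that $f(\omega,\cdot)$ is continuous on $K(\omega)$ (as a uniform limit of continuous functions), since your density argument for $h_k$ relies on the continuity of $f_k(\omega,\cdot)-f(\omega,\cdot)$.
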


\begin{proof}
The function $f$ is  well defined as a function from $\Gr K$ to $\C.$ For each $z\in\C^n$ such that $K^{-1}(z)\not=\emptyset,$ the function $f(\cdot,z)$ is easily seen to be measurable on $K^{-1}(z).$ Thus $f$ is a  generalized continuous function.

Denote by $E \subset \C^n$ the countable set that is dense in $K(\omega)$ for all $\omega \in \Omega$. Let $g$ be a generalized continuous function and let $\alpha \in \R$. By continuity of $g(\omega, \cdot)$, we have that
\begin{align*}
\{\omega &: \sup_{z \in K(\omega)} |g(\omega, z)| \leq \alpha\}\\ 
&= \bigcap_{z \in E}\left(\{\omega \in K^{-1}(z) : | g(\omega, z)| \leq \alpha\} \cup \{\omega \not\in K^{-1}(z)\}\right).
\end{align*}
It follows that the function $\omega \mapsto \sup_{z \in K(\omega)} |g(\omega,z)| = \|g(\omega, \cdot)\|_{K(\omega)}$ is measurable since the right-hand side is a countable intersection of measurable sets.

Let $\varepsilon > 0$ and consider the sets $A_n = \{\omega : \|f_k(\omega,\cdot) - f(\omega,\cdot)\|_{K(\omega)}\leq \varepsilon, \forall k \geq n\}$. By the previous discussion, these sets are measurable since $f_k - f$ is a generalized continuous function. Then $A_1 \subset A_2 \subset \dots$ and $\bigcup_n A_n = \Omega$ by uniform convergence on $K(\omega)$ for all $\omega \in \Omega$. Let $A_0 = \emptyset$ and $A_n^\prime = A_n \setminus A_{n-1}$. From these sets we consider the measurable function $\varphi: \Omega \rightarrow \N$ given by
\begin{equation*}
\varphi = \sum_{n} n \chi_{A_n^\prime}.
\end{equation*}

Define $F(\omega,z) = f_{\varphi(\omega)}(\omega, z)$. Let $U$ be a measurable subset of $\C$ and fix $z \in \C^n$. Then
\begin{align*}
\{\omega &\in K^{-1}(z): F(\omega, z) \in U\} \\
&= \bigcup_{k \in \N} \{\omega \in K^{-1}(z): f_k(\omega, z) \in U\} \cap \{\omega : \varphi(\omega) = k\}
\end{align*}
and therefore $F$ is a generalized random continuous function by the measurability of $f_k$ and $\varphi$. Also, by construction, we have that
\begin{equation*}
\abs{f(\omega, z) - F(\omega, z)} = \abs{f(\omega, z) - f_{\varphi(\omega)}(\omega, z)} \leq \varepsilon,
\end{equation*}
and so, by taking a sequence decreasing to $0$, we can construct a sequence $\{F_j\}$ with the desired properties.
\end{proof}

In particular, we can apply the previous theorem to families of functions.
For each nonempty compact set $K\subset\C^n,$ let $A(K)$ be a family of continuous functions on $K$. Thus,
$$
	A:\mathcal K^\prime(\C^n)\rightarrow \mathcal P(C(K)).
$$
Suppose further that if $Q \subset K$ is a compact set and $f \in A(K)$, then $f|_Q \in A(Q)$. For example, $A(K)$ can be the family of holomorphic, polynomial, or harmonic functions on $K$ 
or the family of rational functions having no poles on $K.$ 
For a random compact set  $K(\omega),$ we define a generalized $A(K)$-random function as  a generalized random
function $f : \Gr K \rightarrow \C$ such that $f(\omega, \cdot) \in A(K(\omega))$ for all $\omega \in \Omega.$

Suppose  $K:\Omega\rightarrow \mathcal K^\prime(\C^n)$ is a  
measurable compact subset of $\C^n,$ and  $K\mapsto A(K),$ for $K\in \mathcal K^\prime(\C^n),$ is as above.
Denote by $A_{[\Omega]}(K)$ the set of generalized $C(K)$-random functions $f : \Gr K \rightarrow \C,$ for which  there exists a sequence of generalized $A(K)$-random functions $\{f_j\}_{j=1}^\infty,$ 
such that, for each $\omega \in \Omega,$ 
$f_j(\omega, \cdot) \rightarrow f(\omega, \cdot)$ uniformly on $K(\omega).$
Denote by $A_{[\Omega]}^{\unif}(K)$ the set of generalized $C(K)$-random functions $f : \Gr K \rightarrow \C,$ for which there exists a sequence of generalized $A(K)$-random functions $\{f_j\}_{j=1}^\infty$ such that $f_j \rightarrow f$ uniformly on $\Gr K$. 

The following theorem follows directly from the previous one.

\begin{theorem} \label{convUnif}
Suppose  $K:\Omega\rightarrow \mathcal K^\prime(\C^n)$ is a uniformly separable random
compact subset of $\C^n$ and
$K\mapsto A(K),$ for $K\in \mathcal K^\prime(\C^n),$ is as above. 
Then $A_{[\Omega]}^{\unif}(K) = A_{[\Omega]}(K)$.
\end{theorem}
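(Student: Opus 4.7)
The plan is to prove the two inclusions separately, noting that one direction is trivial and the other is essentially the content of Theorem~\ref{T:unifsep} translated into the language of the classes $A_{[\Omega]}$ and $A_{[\Omega]}^{\unif}$.

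For the inclusion $A_{[\Omega]}^{\unif}(K)\subseteq A_{[\Omega]}(K)$ I would simply observe that uniform convergence on $\Gr K$ restricts to uniform convergence on each fibre $\{\omega\}\times K(\omega)$, so any sequence of generalized $A(K)$-random functions that witnesses membership in the $\unif$-class also witnesses membership in the parameter-wise class. No hypothesis on $K$ beyond what is built into the definitions is required for this direction.

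For the substantive inclusion $A_{[\Omega]}(K)\subseteq A_{[\Omega]}^{\unif}(K)$, I would start from some $f\in A_{[\Omega]}(K)$ together with a witnessing sequence $\{f_j\}$ of generalized $A(K)$-random functions satisfying $f_j(\omega,\cdot)\to f(\omega,\cdot)$ uniformly on $K(\omega)$ for each $\omega$. Since $A(K(\omega))\subseteq C(K(\omega))$ by the standing hypothesis on the assignment $K\mapsto A(K)$, each $f_j$ is in particular a generalized continuous function. I would then feed the sequence $\{f_j\}$ into Theorem~\ref{T:unifsep}, which applies precisely because $K$ is assumed uniformly separable. This produces a sequence $\{F_j\}$ of generalized continuous functions with $F_j\to f$ uniformly on $\Gr K$, together with the selection property that for every $\omega\in\Omega$ and every $j\in\N$ there exists $k=k(\omega,j)\in\N$ such that $F_j(\omega,\cdot)=f_{k(\omega,j)}(\omega,\cdot)$.

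It then remains to upgrade each $F_j$ from a generalized continuous function to a generalized $A(K)$-random function. Fixing $j$ and $\omega$, the selection property gives $F_j(\omega,\cdot)=f_{k(\omega,j)}(\omega,\cdot)$, and the hypothesis on the witnessing sequence gives $f_{k(\omega,j)}(\omega,\cdot)\in A(K(\omega))$; hence $F_j(\omega,\cdot)\in A(K(\omega))$, as required. Combined with the measurability of $F_j(\cdot,z)$ on $K^{-1}(z)$ supplied by Theorem~\ref{T:unifsep}, this shows that each $F_j$ is indeed a generalized $A(K)$-random function, and therefore $f\in A_{[\Omega]}^{\unif}(K)$. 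The only delicate point, and really the reason the argument works so cleanly, is that Theorem~\ref{T:unifsep} produces $F_j(\omega,\cdot)$ as an honest pointwise \emph{choice} among the $f_k(\omega,\cdot)$, rather than as a convex combination, diagonal limit, or other construction that might leave the subclass $A$; this is precisely what allows the $A$-membership to transfer from the original $\{f_j\}$ to the new $\{F_j\}$ with no further hypothesis on $A$.
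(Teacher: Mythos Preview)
Your proof is correct and takes the same approach as the paper, which simply states that the theorem ``follows directly from the previous one'' (Theorem~\ref{T:unifsep}). You have spelled out the details the paper leaves implicit, in particular the observation that the selection property in Theorem~\ref{T:unifsep} is exactly what guarantees each $F_j(\omega,\cdot)$ inherits membership in $A(K(\omega))$ from the original sequence.
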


Suppose  $K\mapsto A(K),$ for $K\in \mathcal K^\prime(\C^n),$ is as above. 
Denote by $A_{\Omega}(K)$ the set of  $C(K)$-random functions $f : \Omega\times K \rightarrow \C,$ for which  there exists a sequence of $A(K)$-random functions $\{f_j\}_{j=1}^\infty,$ 
such that, for each $\omega \in \Omega,$ 
$f_j(\omega, \cdot) \rightarrow f(\omega, \cdot)$ uniformly on $K.$
Denote by $A_{\Omega}^{\unif}(K)$ the set of  $C(K)$-random functions $f : \Omega\times K \rightarrow \C$ for which there exists a sequence of  $A(K)$-random functions $\{f_j\}_{j=1}^\infty$ such that $f_j \rightarrow f$ uniformly on $\Omega\times K.$

\begin{corollary}\label{unif}
If $K\mapsto A(K),$ for $K\in \mathcal K^\prime(\C^n),$ is as above, then we have
$A_{\Omega}^{\unif}(K) = A_{\Omega}(K)$.
\end{corollary}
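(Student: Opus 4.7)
The plan is to deduce Corollary \ref{unif} directly from Theorem \ref{convUnif} by interpreting the classical compact set $K$ as a constant random compact set. Specifically, I would define $\widetilde K:\Omega\to \mathcal K^\prime(\C^n)$ by $\widetilde K(\omega):=K$ for every $\omega\in\Omega$. This is trivially measurable (every pre-image is either $\emptyset$ or $\Omega$), so $\widetilde K$ is a legitimate random compact set.

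Next I would verify that $\widetilde K$ is uniformly separable. Since $K$ is a compact subset of the separable metric space $\C^n$, it contains a countable dense subset $E$; then for every $\omega\in\Omega$, $E\cap \widetilde K(\omega)=E$ is dense in $\widetilde K(\omega)=K$, which is the defining property of uniform separability. Hence Theorem \ref{convUnif} applies to $\widetilde K$.

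The remaining work is purely a matter of matching definitions. The graph is $\Gr \widetilde K=\Omega\times K$, and as the text notes explicitly just after the definition of generalized random functions, ``in case a compact set $K$ is considered as a random compact set, then a generalized random function on $\Gr K=\Omega\times K$ turns out to be just a random function on $K$.'' Under this identification, a generalized $A(\widetilde K)$-random function on $\Gr \widetilde K$ is exactly an $A(K)$-random function on $\Omega\times K$, and parameter-wise (resp.\ joint) uniform convergence on $\widetilde K(\omega)$ (resp.\ $\Gr \widetilde K$) coincides with parameter-wise (resp.\ joint) uniform convergence on $K$ (resp.\ $\Omega\times K$). Therefore
\[
A_{[\Omega]}(\widetilde K)=A_\Omega(K) \quad\text{and}\quad A_{[\Omega]}^{\unif}(\widetilde K)=A_\Omega^{\unif}(K).
\]
Applying Theorem \ref{convUnif} to $\widetilde K$ yields $A_\Omega^{\unif}(K)=A_\Omega(K)$, which is the desired equality.

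There is no real obstacle here; the only thing to be careful about is the bookkeeping between the two frameworks (generalized random functions on a graph versus random functions on a fixed compact set). Once the constant random compact set $\widetilde K$ is seen to be uniformly separable, the corollary is immediate from Theorem \ref{convUnif}.
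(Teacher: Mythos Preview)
Your proposal is correct and follows exactly the route the paper intends: the corollary is stated without proof as an immediate consequence of Theorem~\ref{convUnif}, and the very next corollary (Corollary~\ref{surprise}) is proved by the one-line observation that the constant mapping $\omega\mapsto K$ is a uniformly separable random compact set---precisely the argument you spell out. Your write-up simply makes explicit the bookkeeping (measurability of the constant map, uniform separability via a countable dense subset of $K$, and the identification $\Gr\widetilde K=\Omega\times K$) that the paper leaves to the reader.
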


The following result came as a surprise to us, since $K$ is arbitrary and we do not assume that $f$ is defined in a neighborhood of $K.$ 

\begin{corollary}\label{surprise}
Let $K\subset\C^n$ be an arbitrary non-empty compact set,  and let
$f:\Omega\times K\rightarrow\C$ be a mapping. The following are equivalent. 

1. There is a sequence $r_j(\omega,z)$ of random rational functions, pole-free on  $K$, such that,  
$$
\mbox{for each} \,\, \omega, \quad r_j(\omega,\cdot)\rightarrow f(\omega,\cdot) \quad \mbox{uniformly on} \quad K.
$$

2. There is a sequence $r_j(\omega,z)$ of random rational functions, pole-free on  $K$, such that,  
$$
	r_j(\omega,z)\rightarrow f(\omega,z) \quad \mbox{uniformly on} 
		\quad \Omega\times K.
$$
\end{corollary}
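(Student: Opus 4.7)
The plan is to observe that this corollary is really a direct packaging of the already-established Corollary \ref{unif} applied to the class of rational functions that are pole-free on $K$.

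The implication (2) $\Rightarrow$ (1) is immediate: uniform convergence on $\Omega \times K$ trivially implies uniform convergence on every slice $\{\omega\}\times K$. So the real content is (1) $\Rightarrow$ (2).

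For (1) $\Rightarrow$ (2), I would proceed as follows. First, view the fixed (parameter-free) compact set $K$ as a random compact set via the constant assignment $K(\omega) := K$ for every $\omega \in \Omega$. Second, for each non-empty compact $Q \subset \C^n$, let $A(Q)$ denote the family of rational functions on $\C^n$ that are pole-free on $Q$. Then $A$ satisfies the hypothesis that precedes Corollary \ref{unif}: if $Q \subset L$ and $r$ is a rational function pole-free on $L$, then $r$ is in particular pole-free on $Q$, so $r|_Q \in A(Q)$. With these choices, condition (1) in the corollary says exactly that $f \in A_{\Omega}(K)$, and condition (2) says exactly that $f \in A_{\Omega}^{\unif}(K)$. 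Corollary \ref{unif} gives $A_{\Omega}^{\unif}(K) = A_{\Omega}(K)$, which yields the desired implication.

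There is really no technical obstacle here beyond checking that the definitions line up; the substantive work has already been carried out in Theorem \ref{T:unifsep} (the measurable selection that upgrades parameter-wise convergence to joint convergence). The one point worth noting is the uniform separability hypothesis required by Theorem \ref{convUnif}: when $K$ is a fixed deterministic compact set regarded as a constant random compact set, uniform separability is automatic, since any countable dense subset of $K$ meets $K(\omega) = K$ densely for every $\omega$. Hence all hypotheses are trivially met and the corollary follows at once from Corollary \ref{unif}.
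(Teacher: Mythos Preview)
Your proposal is correct and takes essentially the same approach as the paper: the paper's proof consists of a single sentence observing that the constant map $\omega\mapsto K$ is a uniformly separable random compact set, which is precisely the verification needed to invoke Corollary~\ref{unif} (equivalently Theorem~\ref{convUnif}) with $A(Q)$ the class of rational functions pole-free on $Q$. Your write-up simply spells out the hypothesis checks (heredity of $A$, uniform separability of the constant compact) that the paper leaves implicit.
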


\begin{proof}
The constant mapping $K:\Omega\rightarrow \mathcal K^\prime(\C^n),$ given by $\omega\mapsto K,$ is a uniformly separable random
compact subset of $\C^n.$ 
\end{proof}

The following particular case is worth a separate statement and is one of our main results. 

\begin{theorem}\label{weakConvUnif}
    $R_\Omega^{\unif}(K) = R_\Omega(K).$
\end{theorem}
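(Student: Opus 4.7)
The plan is to observe that this theorem is essentially an immediate consequence of results already established, namely Corollary \ref{unif} (which itself is the parameter-constant case of Theorem \ref{convUnif}). There are two inclusions to address, but one is trivial.

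For the inclusion $R_\Omega^{\unif}(K)\subset R_\Omega(K)$, I would simply note that if for each $\varepsilon>0$ there is a random rational function $r$ pole-free on $K$ with $|r(\omega,z)-f(\omega,z)|<\varepsilon$ for all $(\omega,z)\in\Omega\times K$, then choosing $\varepsilon=1/j$ produces a sequence $r_j$ of random rational functions, pole-free on $K$, such that $r_j\to f$ uniformly on $\Omega\times K$. In particular $r_j(\omega,\cdot)\to f(\omega,\cdot)$ uniformly on $K$ for every $\omega$, giving $f\in R_\Omega(K)$.

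For the reverse inclusion, I would apply Corollary \ref{unif} with the specific choice $A(K)$ equal to the family of rational functions that are pole-free on $K$. This family meets the hypotheses imposed just before Theorem \ref{convUnif}: each member is continuous on $K$, and if $Q\subset K$ is compact then the restriction to $Q$ of a rational function pole-free on $K$ is a rational function pole-free on $Q$, so $A(K)|_Q\subset A(Q)$. With this choice, the definitions match exactly: $A_\Omega(K)=R_\Omega(K)$ and $A_\Omega^{\unif}(K)=R_\Omega^{\unif}(K)$, so Corollary \ref{unif} yields $R_\Omega^{\unif}(K)=R_\Omega(K)$.

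The only potential obstacle would be verifying that Corollary \ref{unif} genuinely applies, i.e.\ that the constant random compact set $\omega\mapsto K$ is uniformly separable, as required by Theorem \ref{convUnif}. But this is automatic: since $K\subset\C^n$ is a compact metric space it is separable, and any countable dense subset $E\subset K$ serves as a witness since $E\cap K(\omega)=E$ is dense in $K=K(\omega)$ for every $\omega$. Once this is noted, the proof consists of a single sentence invoking Corollary \ref{unif}; all the selection-theoretic work has already been carried out in the proof of Theorem \ref{T:unifsep}.
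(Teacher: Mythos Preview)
Your proposal is correct and is essentially the paper's own approach: the paper presents Theorem~\ref{weakConvUnif} explicitly as ``the following particular case'' of Corollary~\ref{unif} (and Corollary~\ref{surprise}), with no separate proof, and your argument spells out precisely that specialization, taking $A(K)$ to be the rational functions pole-free on $K$ and noting that the constant random compact set is uniformly separable.
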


Theorem \ref{conjRunge} combined with Theorem \ref{weakConvUnif}, yields 
the following rather strong  Runge  theorem.

\begin{theorem} \label{Runge}
 Let $f$ satisfy the hypotheses of Theorem \ref{conjRunge}. 
Then $f \in R_\Omega^{\unif}(K)$.
\end{theorem}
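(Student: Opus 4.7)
The plan is to chain together two results already established in this paper, namely Theorem \ref{conjRunge} and Theorem \ref{weakConvUnif}, to which the statement essentially reduces.

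First I would observe that the hypotheses on $f$ in the statement are, by assumption, exactly the hypotheses of Theorem \ref{conjRunge}. Applying that theorem directly produces a sequence $\{r_j\}$ of random rational functions, each pole-free on $K$, such that for every $\omega \in \Omega$,
\[
r_j(\omega,\cdot) \longrightarrow f(\omega,\cdot) \quad \text{uniformly on } K.
\]
By the definition of $R_\Omega(K)$ given in Section~2 (combined with the observation that $f(\omega,\cdot)$, being the restriction of a function holomorphic in a neighborhood of $K$, is continuous on $K$ and holomorphic on $\intr K$), this says precisely that $f \in R_\Omega(K)$.

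Next I would invoke Theorem \ref{weakConvUnif}, which asserts $R_\Omega^{\unif}(K) = R_\Omega(K)$. Applying this equality to the membership established in the previous step yields $f \in R_\Omega^{\unif}(K)$, which is the conclusion of the theorem.

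There is no real obstacle left at this stage; the substantive work has already been done in Theorem \ref{conjRunge} (via the measurable extension result Theorem \ref{T:extension}) and in Theorem \ref{weakConvUnif}, whose proof in turn rests on the selection argument of Theorem \ref{T:unifsep} applied to the constant random compact set $\omega \mapsto K$ (which is trivially uniformly separable, since a countable dense subset of $K$ works for every $\omega$). The present theorem is simply the combined consequence of these two statements and requires no further argument beyond verifying that the hypotheses align with the definitions.
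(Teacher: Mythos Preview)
Your proposal is correct and follows exactly the same approach as the paper: the paper states explicitly that Theorem \ref{Runge} is obtained by combining Theorem \ref{conjRunge} with Theorem \ref{weakConvUnif}, which is precisely what you do. Your additional remarks verifying that $f$ indeed lies in $R_\Omega(K)$ (continuity on $K$, holomorphy on $\intr K$) and that the constant random compact set is uniformly separable are accurate and simply make explicit what the paper leaves implicit.
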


While the function classes $K\mapsto A(K)$ on compact sets considered in  Corollary \ref{unif} are quite general, covering, for example, many levels of smooth functions, it is desirable to have a similar result for function classes on open sets, in order  to include Hardy spaces and other important function classes defined on open sets. 
From Corollary \ref{unif}, we shall deduce an analogous corollary for approximation on compact subsets of an open set.  Suppose $U\mapsto A(U)$ for $U$ open in $\C^n,$ is a hereditary class of continuous functions, in the sense that, if $V$ is an open subset of $U$ and $f\in A(U),$ then $f|_V\in A(V).$ For a compact set $K\subset\C,$ let $A(K)$ consist of those continuous functions $f$ on $K$ for which there is an  open neighborhood $U$ of $K$ and $F\in A(U)$ such that $F|_K=f.$ Then $K\mapsto A(K)$ satisfies the hypotheses of Corollary \ref{unif}, so
from Corollary \ref{unif} we deduce the following. 

\begin{corollary}\label{locunif}
Suppose $U\mapsto A(U)$ is as above and $f\in A(U).$  If there exists an exhaustion $K_1\subset K_2\subset\cdots$ of $U$ and, for every $j$ and every $\omega$ a sequence $f_{k,j}$ of $A(K_k)$-random functions, such that $f_{k,j}(\omega,\cdot)\rightarrow f(\omega,\cdot)$ uniformly on $K_k,$ then there exists a sequence $f_k,$ where each $f_k$ is an $A(K_k)$-random function and 
$$
	|f_k(\omega,z)-f(\omega,z)|<1/k, \quad \forall \omega\in\Omega, \forall z\in K_k, \forall  k.
$$  

\end{corollary}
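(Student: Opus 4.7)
The plan is to invoke Corollary~\ref{unif} independently at each compact level $K_k$ of the exhaustion, and then at each level extract a single approximant accurate to within $1/k$.

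First fix $k$. The hypothesis provides a sequence $\{f_{k,j}\}_{j=1}^{\infty}$ of $A(K_k)$-random functions such that $f_{k,j}(\omega,\cdot)\to f(\omega,\cdot)$ uniformly on $K_k$ for every $\omega\in\Omega$. By the very definition of $A_\Omega(K_k)$, this says precisely that the restriction $f|_{\Omega\times K_k}$ belongs to $A_\Omega(K_k)$. The hereditary hypothesis on $U\mapsto A(U)$, together with the definition of the compact-set class $K\mapsto A(K)$ given immediately before the statement, ensures that $K\mapsto A(K)$ fits the framework of Corollary~\ref{unif}; applying that corollary to $K_k$ yields
$$
    A_\Omega(K_k) \;=\; A_\Omega^{\unif}(K_k).
$$

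Consequently $f|_{\Omega\times K_k}\in A_\Omega^{\unif}(K_k)$, so there exists a sequence $\{h_m^{(k)}\}_{m=1}^{\infty}$ of $A(K_k)$-random functions converging to $f$ \emph{jointly} uniformly on $\Omega\times K_k$. Choose an index $m(k)$ for which
$$
    |h_{m(k)}^{(k)}(\omega,z)-f(\omega,z)| \;<\; 1/k \qquad \text{for all } (\omega,z)\in\Omega\times K_k,
$$
and set $f_k:=h_{m(k)}^{(k)}$. Repeating this construction for each $k$ produces the desired sequence of $A(K_k)$-random functions.

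The substantive work has already been absorbed into Corollary~\ref{unif}, which itself rests on the measurable-selection argument behind Theorem~\ref{T:unifsep}; at this stage no further genuine obstacle remains. The only point one needs to be mindful of is that the passage from the open-set class $A(U)$ to the compact-set class $A(K_k)$ still fits the general framework on which Corollary~\ref{unif} is built, and this is precisely what the paragraph preceding Corollary~\ref{locunif} has arranged.
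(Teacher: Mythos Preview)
Your argument is correct and matches the paper's own proof, which simply says to apply Corollary~\ref{unif} to each $A(K_k)$ and take a diagonal sequence. You have merely spelled out that diagonal step explicitly by choosing, for each $k$, an element $h_{m(k)}^{(k)}$ within $1/k$ of $f$ on $\Omega\times K_k$.
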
 

\begin{proof}
Corollary \ref{locunif} follows by applying Corollary \ref{unif} to each $A(K_k)$ and taking a diagonal sequence. 
\end{proof}

\begin{corollary}\label{RungeCor 1}
Let $U$ be an open set in $\C$ and $f:\Omega\times U \rightarrow \C$ be a random holomorphic function. 
Let $K_1\subset K_2\subset\cdots$ be an exhaustion of $U$ by compact subsets and $\varepsilon_1, \varepsilon_2, \ldots$ be a sequence of positive numbers.
Then, there exists a sequence $R_1, R_2, \ldots,$ such that,   each $R_n$ is a random rational function, pole-free on  $K_n$ and   
$$
	|R_n(\omega,z)-f(\omega,z)| < \varepsilon_n, \quad \mbox{for all} \quad 
		(\omega,z)\in \Omega\times K_n. 
$$
\end{corollary}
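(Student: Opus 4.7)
The plan is to combine Theorem~\ref{Runge 1} with Theorem~\ref{weakConvUnif} applied at each level $K_n$ of the exhaustion. The point is that Theorem~\ref{Runge 1} already produces, for each fixed compact $K_n\subset U$, a sequence of random rational functions pole-free on $K_n$ that converges pointwise-in-$\omega$ uniformly on $K_n$ to $f(\omega,\cdot)$; and Theorem~\ref{weakConvUnif} upgrades such parameter-wise uniform approximation to jointly uniform approximation on $\Omega\times K_n$.

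First, I would fix $n$ and apply Theorem~\ref{Runge 1} with the open set $U$ and the compact set $K_n\subset U$ (note that the hypotheses of Theorem~\ref{Runge 1} are satisfied since $f$ is by assumption a random holomorphic function on $U$). This yields a sequence $\{r_{n,j}\}_{j\ge 1}$ of random rational functions pole-free on $K_n$ such that, for every $\omega\in\Omega$, $r_{n,j}(\omega,\cdot)\to f(\omega,\cdot)$ uniformly on $K_n$. Translating into the notation of Section~\ref{GeneralizedmeasurableFunctions}, this says precisely that $f|_{\Omega\times K_n}\in R_\Omega(K_n)$.

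Next, I invoke Theorem~\ref{weakConvUnif}, which asserts $R_\Omega^{\unif}(K_n)=R_\Omega(K_n)$. Hence $f|_{\Omega\times K_n}\in R_\Omega^{\unif}(K_n)$, which by the definition of $R_\Omega^{\unif}(K_n)$ means: for the given $\varepsilon_n>0$ there exists a random rational function $R_n$ pole-free on $K_n$ with
$$
|R_n(\omega,z)-f(\omega,z)|<\varepsilon_n \qquad \text{for all } (\omega,z)\in\Omega\times K_n.
$$
Performing this construction independently for each $n=1,2,\ldots$ produces the desired sequence $R_1,R_2,\ldots$, completing the proof.

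There is really no serious obstacle: the work has already been done in Theorems~\ref{Runge 1} and \ref{weakConvUnif}, and the corollary is essentially their conjunction applied level-by-level along the exhaustion. The only thing to be careful about is that the approximation on $K_n$ need not extend to a globally defined (in $n$) random rational function; each $R_n$ is chosen separately, and no compatibility between different $n$'s is claimed, which is fine since the statement only requires the estimate on $\Omega\times K_n$ for the $n$-th term.
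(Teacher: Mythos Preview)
Your proof is correct and follows essentially the same approach as the paper: apply Theorem~\ref{Runge 1} on each $K_n$ to get $f|_{\Omega\times K_n}\in R_\Omega(K_n)$, then use Theorem~\ref{weakConvUnif} to upgrade to $R_\Omega^{\unif}(K_n)$ and select a single $R_n$ achieving the bound $\varepsilon_n$. The paper phrases the last step as ``taking an appropriate diagonal sequence'' from the uniformly convergent sequences $R_{n,j}$, but this amounts to exactly the selection you describe.
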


\begin{proof} 
From Theorem \ref{Runge 1} and Theorem \ref{weakConvUnif}, for each $n,$ there is a sequence $R_{n,j}$ of random rational function, pole-free on  $K_n,$ such that $R_{n,j}\rightarrow f$ uniformly on $\Omega\times K_n.$ We conclude the proof by taking an appropriate diagonal sequence. 
\end{proof}

We shall now, following very closely the method of \cite{AB1984}, prove an ``almost everywhere'' version of a measurable Oka--Weil theorem. We first need the following lemmas.

\begin{lemma}\label{P closed}
	The set $P(k,m)$ of polynomials in $\C^n$ of degree at most $k$ and coefficients bounded by $m$ (in norm) is closed in every $C(K,\C)$ for every non-empty compact set $K\subseteq \C^n$.
\end{lemma}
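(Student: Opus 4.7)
The plan is to exploit the finite-dimensionality of the ambient vector space of polynomials of degree at most $k$, combined with the compactness of the closed coefficient ball of radius $m$.

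First, I would fix a non-empty compact set $K\subseteq\C^n$ and take a sequence $\{p_j\}\subset P(k,m)$ whose restrictions to $K$ converge uniformly on $K$ to some $f\in C(K,\C)$. Writing each $p_j$ in multi-index notation as $p_j(z)=\sum_{|\alpha|\le k}c^{(j)}_\alpha z^\alpha$, the hypothesis $p_j\in P(k,m)$ means that the coefficient vector $(c^{(j)}_\alpha)_{|\alpha|\le k}$ lies in the closed polydisc of radius $m$ in $\C^N$, where $N=\binom{n+k}{k}$. Since this polydisc is compact, the Bolzano--Weierstrass theorem provides a subsequence $\{p_{j_\ell}\}$ whose coefficient vectors converge to some limit $(c_\alpha)_{|\alpha|\le k}$ with $|c_\alpha|\le m$ for every $\alpha$.

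Let $p(z):=\sum_{|\alpha|\le k}c_\alpha z^\alpha$. By construction $p\in P(k,m)$. Moreover, coefficient-wise convergence of a finite sum of monomials implies uniform convergence on every compact subset of $\C^n$ (each monomial $z^\alpha$ is bounded on $K$, so $\|p_{j_\ell}-p\|_K\le \sum_{|\alpha|\le k}|c^{(j_\ell)}_\alpha-c_\alpha|\cdot\|z^\alpha\|_K\to 0$). In particular $p_{j_\ell}\to p$ uniformly on $K$. Combined with $p_{j_\ell}\to f$ uniformly on $K$, uniqueness of uniform limits forces $p|_K=f$. Therefore $f$ is the restriction to $K$ of an element of $P(k,m)$, which is exactly what it means to say $f\in P(k,m)$ as a subset of $C(K,\C)$.

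There is no real obstacle here; the only point requiring a moment's thought is that the identification of $P(k,m)$ with a subset of $C(K,\C)$ is via restriction, and this map need not be injective when $K$ is very small (e.g.\ a finite set). This does not affect the argument: we only need the image under restriction to be closed, and the compactness extraction above produces a pre-image in $P(k,m)$ whose restriction realizes the limit $f$.
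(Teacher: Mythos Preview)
Your argument is correct and follows essentially the same route as the paper: extract a convergent subsequence of coefficient vectors by compactness of the closed ball in $\C^N$, note that coefficient convergence gives uniform convergence on $K$, and conclude by uniqueness of limits that $f=p|_K$ for some $p\in P(k,m)$. Your added remark about non-injectivity of the restriction map is a nice clarification that the paper leaves implicit.
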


\begin{proof}
	Fix a non-empty compact set $K$. Let there be a sequence $p_i$ of polynomials in $P(k,m)$ converging uniformly to a function $f\in C(K,\C)$. We wish to show $f$ is in the restriction of $P(k,m)$ over $K$.
	
	We denote $d_k$ the total number of possible terms in such polynomials. We in fact have that
	\begin{align*}
		d_k=\sum_{\ell=0}^k\binom{n+\ell-1}{n-1}.
	\end{align*}
	
	We denote $a_{i,\alpha}$, where $\alpha\in \N^n$ the coefficient of $z^\alpha$. We may then form the $d_k$-tuplets 
 of coefficients (by choosing an ordering) : $a_i\in \C^{d_k}$ for $p_i$. To simplify our work, we may endow $\C^{d_k}$ with the max norm :
$$\|a_i\|_\infty=\max_\alpha |a_{i,\alpha}|.$$
	
	The $a_{i,\alpha}$ are bounded in norm by $m$. Thus the $a_i$ are all in the ball $\overline{B_{m}}(0)\subset\C^{d_k} $, which is a compact set, and thus  the sequence $\{a_i\}_{i=0}^\infty$ has a subsequence $\{a_i'\}_{i=0}^\infty$ that converges to a point $b\in\C^{d_k}.$ We denote the associated subsequence of polynomials by $\{p_i'\}_{i=0}^\infty$. 

Since $p_i\rightarrow f,$ the same is true for the subsequence $p_i'.$ But $p_i'$ converges on all compact sets to the polynomial $p(z)=\sum_{|\alpha|\leq k} b_\alpha z^\alpha.$ Thus, $f=p|_K.$ 
\end{proof}

\begin{lemma}
	The set $P(C^n)$ of polynomials over $\C^n$ is a Suslin subset of $C(K,\C)$ for every non-empty compact set $K\subseteq \C^n$.
\end{lemma}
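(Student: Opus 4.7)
The plan is to observe that the set $P(\C^n)$, viewed as a subset of $C(K,\C)$ via restriction, is expressible as a countable union of the closed pieces $P(k,m)$ from Lemma~\ref{P closed}, hence it is $F_\sigma$, hence Borel, hence Suslin.

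First I would write
$$
P(\C^n)\bigr|_K \;=\; \bigcup_{k=0}^\infty\bigcup_{m=1}^\infty P(k,m)\bigr|_K,
$$
since every polynomial on $\C^n$ has some finite degree $k$ and its finitely many coefficients are bounded by some integer $m$. By Lemma~\ref{P closed}, each $P(k,m)\bigr|_K$ is a closed subset of $C(K,\C)$. Thus $P(\C^n)\bigr|_K$ is an $F_\sigma$ subset of $C(K,\C)$, and in particular a Borel subset.

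Next I would invoke the standard fact that the ambient space $C(K,\C)$ is Polish: since $K$ is a compact metric space, $C(K,\C)$ equipped with the sup norm is a separable Banach space, and thus a separable complete metrizable space. In a Polish space every Borel set is Suslin (analytic), because the Borel $\sigma$-algebra is contained in the Suslin $\sigma$-algebra generated by the closed sets via the Suslin operation. Combining these observations yields that $P(\C^n)\bigr|_K$ is Suslin in $C(K,\C)$, which is what needs to be shown.

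No serious obstacle is expected: the only point that requires a line of care is the passage from ``closed'' to ``Suslin'', which is a classical consequence of the Polish structure of $C(K,\C)$. The identification of $P(\C^n)$ with its image $P(\C^n)\bigr|_K$ inside $C(K,\C)$ is implicit in the statement (as in the preceding lemma) and does not affect the argument, since the image of the countable union is the countable union of the images.
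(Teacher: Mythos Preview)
Your proposal is correct and follows essentially the same route as the paper: write $P(\C^n)|_K$ as the countable union $\bigcup_{k,m} P(k,m)|_K$ of the closed sets from Lemma~\ref{P closed}, conclude it is Borel, and then invoke that $C(K,\C)$ is Polish so Borel implies Suslin. The only cosmetic differences are that the paper argues separability and completeness of $C(K,\C)$ separately (citing the proof of Proposition~\ref{measurable element} for separability) rather than packaging them as ``separable Banach space,'' and it does not bother to name the intermediate $F_\sigma$ step.
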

\begin{proof}
	Firstly, we have that
	\begin{align*}
		P(\C^n)=\bigcup_{m=0}^\infty\bigcup_{k=0}^\infty P(k,m).
	\end{align*}
	If we fix a compact set $K\subset \C^n$, then this is a countable union of sets each of which is closed relative to $C(K,\C)$, by Lemma \ref{P closed}. Thus $P(\C^n)|_K$ is a Borel subset of $C(K,\C)$.
	
	We have already shown in the proof of Proposition \ref{measurable element} that $C(K,\C)$ is separable. It is also complete, as a Cauchy sequence with the sup norm converges uniformly, and thus converges toward a continuous function. Thus $C(K,\C)$ is a Polish space.
	It is known  that every Borel subset of a Polish space is Suslin.
\end{proof}

\begin{theorem} \label{OkaWeil}
	Let $(\Omega,\mathcal A, \mu)$ be a $\sigma$-finite measure space. Let $K$ be a random compact set, whose range $\{K(\omega):\omega\in\Omega\}$ consists of at most a countable number of different compact sets. Suppose that $K$ is polynomially convex, i.e., $K(\omega)=\widehat K(\omega)$ for all~$\omega$. Let $f$ be a generalized random function such that $f(\omega,\cdot)$ is holomorphic in a neighborhood of $K(\omega)$ for each $\omega$, and let $\varepsilon$ be a positive measurable function defined on $\Omega.$ Then there exists $p$, a generalized random polynomial, such that$$\|p(\omega,\cdot)-f(\omega,\cdot)\|_{K(\omega)}<\varepsilon(\omega)$$
	for all $\omega$ outside a  measurable  set $L\subset \Omega$ such that $\mu(L)=0$.
\end{theorem}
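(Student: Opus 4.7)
The plan is to exploit the countability of the range of $K$ to reduce to a fixed-compact-set situation, and then to invoke a measurable selection principle on the multifunction sending $\omega$ to the set of admissible approximating polynomials.

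\textbf{Step 1: Countable decomposition.} Since the range of $K$ consists of at most countably many compact sets, say $\{K_n\}_{n\geq 1}$, set $\Omega_n:=\{\omega\in\Omega:K(\omega)=K_n\}$. Each singleton $\{K_n\}$ is Borel in $\mathcal{K}'(\C^n)$ (a metric space), and so $\Omega_n\in\mathcal A$ because $K$ is measurable. The sets $\Omega_n$ partition $\Omega$, and it suffices to construct the random polynomial $p$ on each $\Omega_n$ separately, then paste.

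\textbf{Step 2: Reduction to a fixed compact set.} Fix $n$ and work on $\Omega_n$. By hypothesis, $K_n=\widehat{K_n}$ and $f(\omega,\cdot)$ is holomorphic in a neighborhood of $K_n$ for each $\omega\in\Omega_n$. The map $F_n:\Omega_n\to C(K_n,\C)$, $\omega\mapsto f(\omega,\cdot)$, is measurable by Proposition~\ref{measurable element}. The classical Oka--Weil theorem therefore guarantees that for every $\omega\in\Omega_n$ the set
$$
\Phi_n(\omega):=\bigl\{q\in P(\C^n)|_{K_n}:\|q-F_n(\omega)\|_{K_n}<\varepsilon(\omega)\bigr\}
$$
is non-empty.

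\textbf{Step 3: Measurable selection via the Suslin property.} The graph $\mathrm{Gr}(\Phi_n)\subset\Omega_n\times C(K_n,\C)$ is the intersection of the Suslin set $\Omega_n\times(P(\C^n)|_{K_n})$ (by the preceding lemma) with the open set $\{(\omega,q):\|q-F_n(\omega)\|_{K_n}<\varepsilon(\omega)\}$, which is measurable-times-open because $(\omega,q)\mapsto\|q-F_n(\omega)\|_{K_n}$ is a Carath\'eodory function (measurable in $\omega$, continuous in $q$) and $\varepsilon$ is measurable. Thus $\mathrm{Gr}(\Phi_n)$ is a Suslin subset of $\Omega_n\times C(K_n,\C)$ (a Polish space). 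By the von Neumann--Aumann measurable selection theorem, applied on the $\sigma$-finite space $(\Omega_n,\mathcal A|_{\Omega_n},\mu|_{\Omega_n})$, there exist a measurable set $L_n\subset\Omega_n$ with $\mu(L_n)=0$ and a measurable selector $s_n:\Omega_n\setminus L_n\to C(K_n,\C)$ with $s_n(\omega)\in\Phi_n(\omega)$. Extend $s_n$ arbitrarily (say by $0$) on $L_n$.

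\textbf{Step 4: Assembly.} Define $p(\omega,z):=s_n(\omega)(z)$ for $\omega\in\Omega_n$ and $z\in K_n=K(\omega)$. Then $p(\omega,\cdot)\in P(\C^n)$ for every $\omega\in\Omega\setminus L$, where $L:=\bigcup_nL_n$ has $\mu(L)=0$. For fixed $z\in\C^n$ with $K^{-1}(z)\neq\emptyset$, measurability of $\omega\mapsto p(\omega,z)$ on each $K^{-1}(z)\cap\Omega_n$ follows from the measurability of $s_n$ composed with the continuous evaluation functional at $z$ (Proposition~\ref{measurable element}), so $p$ is a generalized random polynomial. By construction $\|p(\omega,\cdot)-f(\omega,\cdot)\|_{K(\omega)}<\varepsilon(\omega)$ off $L$.

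The main obstacle is Step~3: one must verify that the graph of the approximation multifunction is analytic/Suslin in a Polish ambient space (which is exactly why the two preceding lemmas established the Suslin character of $P(\C^n)|_K$ in $C(K,\C)$) and that the selection theorem is applicable. The unavoidable null exceptional set $L$ arises because the selection theorem produces a universally measurable selector, which is $\mathcal A$-measurable after modification on a $\mu$-null set by $\sigma$-finiteness.
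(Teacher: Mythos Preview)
Your proposal is correct and follows essentially the same route as the paper: countable decomposition into the fibers $\Omega_n=K^{-1}(K_n)$, reduction to a fixed compact set via Proposition~\ref{measurable element}, non-emptiness of the approximation multifunction by the classical Oka--Weil theorem, a measurable selection using the Suslin property of $P(\C^n)|_{K_n}\subset C(K_n,\C)$, and then gluing. The only cosmetic difference is that the paper invokes the selection theorem by citing Theorem~1 of \cite{AB1984}, whereas you name von Neumann--Aumann directly and spell out the Carath\'eodory argument for measurability of the graph; the content is the same.
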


\begin{proof}
We denote $\{K_j\}_{j=0}^\infty$ the values taken by $K$. We also denote 
	\begin{equation*}
	\Omega_j:=\{\omega \in \Omega : K(\omega)=K_j\}=K^{-1}(K_j).
	\end{equation*}
As this is the pre-image of a closed set (the single compact set), each $\Omega_j$ is measurable 
and $\Omega=\cup_j\Omega_j.$

	We shall define as measurable subsets of $\Omega_j$ the following $\sigma$-algebra : $\mathcal A_j:=\{P\cap \Omega_j : P\in \mathcal A\}$. But $\Omega_j$ is measurable in $\Omega$, and thus this definition is equivalent to $\mathcal A_j=\{P\subset \Omega_j : P\in \mathcal A\}$. Most importantly, every measurable set of $\Omega_j$ is measurable in $\Omega$.
	
	Let $f_j$ be the restriction of $f$ to $\Omega_j\times K_j.$ We claim this is a measurable function. Fix $z\in K_j$. We know that $f(\cdot, z)$ is a measurable mapping between $K^{-1}(z)$ and $\C$ and, for an open set $O$, $$(f_j(\cdot,z))^{-1}(O)=\Omega_j\cap (f(\cdot,z))^{-1}(O).$$ This proves that $f(\cdot,z)$ is a measurable function over $\Omega_j$, and thus $f$ is a random function.
	
	By Proposition \ref{measurable element}, we know that the mapping $\omega\mapsto f(\omega,\cdot)$ between $\Omega_j$ and $C(K_j,\C)$ is measurable.
	
	We know that $f_j(\omega,\cdot)$ is holomorphic in a neighborhood of $K_j$. But $K_j$ is, by hypothesis, polynomially convex. Thus, by the Oka--Weil Approximation Theorem,  the multivalued mapping
	\begin{align*}
		\psi_{j}(\omega):=\{q\in P(\C^n) : \|f_j(\omega,\cdot)-q(\cdot)\|_{K_j}<\varepsilon(\omega)\}
	\end{align*}
	is never empty.
	
	By Theorem 1 of \cite{AB1984}, and since $P(\C^n)$ is a Suslin subset of $C(K_j,\C)$, this means there exists a measurable selection $p_j : \Omega_j \rightarrow P(\C^n)$ which approximates $f_j$ to within $\varepsilon(\omega)$ almost everywhere, which means outside of a measurable set of measure $0.$
	
	We now define $p : \Omega\rightarrow P(\C^n)$ by setting $p|_{\Omega_j}=p_j$. Clearly, this function approximates $f$ almost everywhere. Indeed, if we denote $L_j$ the set of $\omega\in \Omega_j$ for which $p$ does not approximate $f$, we have that $L=\cup_{j=0}^\infty L_j$ and thus $\mu(L)=\sum_{i=0}^\infty \mu(L_j)=0$.
 Since the $L_j$ are measurable, $L$ is  measurable.

	It is only left to show that this is a generalized random function. By Proposition \ref{measurable element}, we have that for fixed $z\in K_j$, the mapping $\omega \mapsto p_j(\omega,z)$ is measurable. Thus, we can now say that, for a given open set $O\subset \C$ and $z \in \C^n$, the set
	\begin{align*}
		(p(\cdot,z))^{-1}(O)=\bigcup_{\{j\in \N: z\in K_j\}}(p_j(\cdot,z))^{-1}(O)
	\end{align*}
is a measurable set, as a countable union of measurable sets. 

Since this is a subset of the measurable set 
$$
	K^{-1}(z)=\bigcup_{j : z\in K_j}\Omega_j,
$$
the function $p$ is measurable on $K^{-1}(z)$ and hence $p$ is a generalized random function.
\end{proof}

It would be interesting to see what happens when trying to remove the ``almost everywhere'' or by reducing the hypothesis of the compact set. Could it be true if it were only uniformly separable, or even only measurable? 

The following corollary is the Oka--Weil Theorem, but since the Oka--Weil Theorem was invoked in the proof of Theorem \ref{OkaWeil}, the corollary should be considered more properly as a special case.

\begin{corollary}
Let $K\subset\C^n$ be a polynomially convex compact set and $f$ a measurable function in a neighborhood of $K,$ such that  $f(\omega,\cdot)$ is holomorphic in a neighborhood of $K(\omega)$ for each $\omega$, and let $\varepsilon$ be a positive measurable function defined on $\Omega.$ Then there exists $p$, a generalized random polynomial, such that
$$\|p(\omega,\cdot)-f(\omega,\cdot)\|_K<\varepsilon(\omega)$$
	for all $\omega$ outside a  measurable  set $L\subset \Omega$ such that $\mu(L)=0$.
\end{corollary}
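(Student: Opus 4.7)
The plan is simply to view the fixed compact set $K$ as a constant-valued random compact set and then quote Theorem~\ref{OkaWeil}. Concretely, define $\widetilde K:\Omega\rightarrow\mathcal K^\prime(\C^n)$ by $\widetilde K(\omega):=K$ for every $\omega\in\Omega$. This mapping is trivially measurable (the preimage of any Borel set in $\mathcal K^\prime(\C^n)$ is either $\emptyset$ or $\Omega$), so $\widetilde K$ is a random compact set. Its range $\{\widetilde K(\omega):\omega\in\Omega\}=\{K\}$ has cardinality one, which is certainly at most countable, and $\widetilde K(\omega)=K=\widehat K=\widehat{\widetilde K(\omega)}$ for every $\omega$, so $\widetilde K$ is polynomially convex in the sense required by Theorem~\ref{OkaWeil}.

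Next, I would check that $f$ is a generalized random function on $\Gr\widetilde K=\Omega\times K$. Here $\widetilde K^{-1}(z)=\Omega$ for every $z\in K$, and the hypothesis that $f$ is measurable in a neighborhood of $K$ (in the first variable, for each fixed second variable) gives that $f(\cdot,z):\Omega\rightarrow\C$ is measurable for each $z\in K$; since this is exactly $f(\cdot,z):\widetilde K^{-1}(z)\rightarrow\C$, the function $f$ is a generalized random function on $\Gr\widetilde K$. The holomorphy hypothesis $f(\omega,\cdot)\in\mathcal O(U_\omega)$ for some neighborhood $U_\omega$ of $K=\widetilde K(\omega)$ is inherited directly, and $\varepsilon:\Omega\rightarrow(0,\infty)$ is measurable by assumption.

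With these verifications in place, Theorem~\ref{OkaWeil} applied to $\widetilde K$, $f$, and $\varepsilon$ yields a generalized random polynomial $p$ such that
\[
\|p(\omega,\cdot)-f(\omega,\cdot)\|_{\widetilde K(\omega)}<\varepsilon(\omega)
\]
for all $\omega$ outside a measurable null set $L\subset\Omega$. Since $\widetilde K(\omega)=K$, this reads $\|p(\omega,\cdot)-f(\omega,\cdot)\|_K<\varepsilon(\omega)$ off $L$, which is the desired conclusion.

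There is essentially no obstacle here, since everything is subsumed by Theorem~\ref{OkaWeil}; the only minor point worth flagging is the compatibility between the notion of generalized random function on $\Gr\widetilde K$ and an ordinary random function on $\Omega\times K$, which the paper has already observed to coincide when the random compact set is constant.
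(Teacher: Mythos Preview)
Your proof is correct and follows exactly the same approach as the paper: apply Theorem~\ref{OkaWeil} to the constant random compact set $\widetilde K(\omega):=K$. The paper's proof is a single sentence to this effect, while you have spelled out the routine verifications of the hypotheses in more detail.
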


 \begin{proof}
We apply the theorem to the constant random compact set 
$$K(\omega):=K, \quad \forall \omega.$$ 
\end{proof}

\begin{corollary}(Oka--Weil)
Let $K\subset\C^n$ be a polynomially convex compact set and $f$ be a  function holomorphic in a neighborhood of $K.$  Then, for each $\varepsilon>0,$ there exists a  polynomial $p$ such that$$\|p-f\|_K<\varepsilon.$$
\end{corollary}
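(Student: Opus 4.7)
The plan is to reduce the classical Oka--Weil theorem to the preceding corollary by taking $\Omega$ to be trivial. Specifically, let $\Omega=\{\omega_0\}$ equipped with the discrete $\sigma$-algebra $\mathcal A=\{\emptyset,\Omega\}$ and the counting measure $\mu$, which is certainly $\sigma$-finite. On this space, every function of $\omega$ is automatically measurable, and the only measurable set of measure zero is $\emptyset$.

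Next, I would regard the given compact set $K\subset\C^n$ as the constant random compact set $K(\omega_0):=K$, and the given holomorphic function $f$ on a neighborhood of $K$ as the constant generalized random function $f(\omega_0,z):=f(z)$. The polynomial convexity of $K$ gives $K(\omega_0)=\widehat{K(\omega_0)}$, so the hypothesis of the preceding corollary is satisfied. The constant function $\varepsilon(\omega_0):=\varepsilon$ is measurable and positive.

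Applying the preceding corollary yields a generalized random polynomial $p$ such that $\|p(\omega,\cdot)-f(\omega,\cdot)\|_K<\varepsilon(\omega)$ for all $\omega$ outside a measurable set $L\subset\Omega$ with $\mu(L)=0$. Since $\mu$ is counting measure, $L=\emptyset$, so the inequality holds at $\omega_0$. Setting $p(z):=p(\omega_0,z)$ gives an ordinary polynomial with $\|p-f\|_K<\varepsilon$, as required.

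There is no serious obstacle here: the only thing worth checking is that the trivial measure space legitimately triggers the ``almost everywhere'' conclusion to hold everywhere, which it does because every singleton has positive counting measure. As the authors themselves note in the remark just before the statement, the Oka--Weil theorem is logically a special case of the measurable version (and was in fact used in its proof), so this corollary is essentially a formal specialization.
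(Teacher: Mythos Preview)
Your proof is correct and follows essentially the same specialization strategy as the paper's. The only difference is the choice of measure space: the paper takes $(\Omega,\mathcal A,\mu)=([0,1],\mathcal B,\lambda)$ and then picks some $\omega_0\in[0,1]\setminus L$ (which exists because $\lambda([0,1])>0$), whereas your singleton $\Omega$ with counting measure forces $L=\emptyset$ outright --- a slightly cleaner route to the same conclusion.
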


\begin{proof}
Let  $(\Omega,\mathcal A, \mu)=([0,1],\mathcal B,\lambda)$ be the probability space $[0,1]$ with Lebes\-gue measure $\lambda$ on the Borel algebra $\mathcal B.$ Now, we apply the previous corollary to the constant measurable function $\varepsilon(\omega):=\varepsilon.$ Since $[0,1]$ has positive measure, there is some $\omega_0\in[0,1]$ such that 
$$\|p(\omega_0,\cdot)-f\|_K<\varepsilon$$
To conclude the proof, we set $p(z)=p(\omega_0,z).$ 
\end{proof}


\subsection*{Acknowledgements}
The first author was supported by grant RGPIN-2016-04107  from NSERC (Canada).
The second author was supported by grants from NSERC and the Canada Research Chairs program.


\normalsize

\end{document}